\documentclass[12pt]{article}

\usepackage{fullpage}
\usepackage[T1]{fontenc}
\usepackage[utf8]{inputenc}
\usepackage{lmodern}
\usepackage{microtype}
\usepackage{amsmath,amssymb,amsfonts,amsthm,mathtools}
\usepackage{enumitem}
\usepackage[hidelinks]{hyperref}
\usepackage{cite}

\newcommand{\Bb}{\mathbb{B}}
\newcommand{\Zz}{\mathbb{Z}}
\newcommand{\Rr}{\mathbb{R}}
\newcommand{\Id}{\operatorname{Id}}
\newcommand{\DUC}{\mathfrak{D}}

\newcommand{\norm}[1]{\left\lVert #1\right\rVert}
\newcommand{\abs}[1]{\left|#1\right|}
\DeclareMathOperator{\Lip}{Lip}
\DeclareMathOperator{\dist}{dist}

\newtheorem{thm}{Theorem}[section]
\newtheorem{lem}[thm]{Lemma}

\newtheorem{corr}[thm]{Corollary}
\theoremstyle{definition}
\newtheorem{defin}[thm]{Definition}
\newtheorem{example}[thm]{Example}
\newtheorem{remark}[thm]{Remark}

\title{Discontinuous Generalized Hyperbolicity and Structural Stability
}
\author{Sergey Tikhomirov\thanks{Institute of Mathematics and Statistics,
University of S\~ao Paulo (IME--USP), S\~ao Paulo, Brazil.
Corresponding author: sergey.tikhomirov@gmail.com.
ORCID: 0000-0002-0018-2356.}}

\date{}

\begin{document}
\maketitle

\begin{abstract}
We study diffeomorphisms of Banach spaces satisfying generalized
hyperbolicity: the stable and unstable subspaces are invariant only up
to inclusion, need not depend continuously on the point, and
expansivity may fail. On closed manifolds, this condition is equivalent
to Axiom~A with strong transversality. We introduce a
continuous-selection method that
converts uniform orbitwise estimates into bounded continuous solutions
of cohomological equations for generalized-hyperbolic cocycles. For
diffeomorphisms in the uniform global \(C^1\) category, this yields
continuous semiconjugacies in both directions under sufficiently small
bounded Lipschitz perturbations, and topological stability in the sense
of Walters. For the infinite product of one-dimensional Morse-Smale
systems on \(\ell^p(\mathbb Z)\), \(1\leq p\leq\infty\), we construct
compatible forward and reverse cohomological solution operators and
prove structural stability, including for perturbations that couple
coordinates, although the product is not expansive. For
\(p=\infty\) this gives a structurally stable diffeomorphism with
uncountably many hyperbolic fixed points, in contrast with the finitely
many basic sets of Axiom~A diffeomorphisms. The cocycle analysis 
yields Lipschitz shadowing for the induced operator on bounded
continuous vector fields.

\medskip
\noindent\textbf{Keywords:} generalized hyperbolicity; structural
stability; topological stability; shadowing; cohomological equation;
continuous selection; Banach space.

\medskip
\noindent\textbf{Mathematics Subject Classification (2020):}
Primary 37D20, 37C20, 37L45, 46T20; Secondary 37C50, 37D15, 54C65, 47B33, 47J07, 47A16.
\end{abstract}

\section{Introduction}

Hyperbolicity, introduced by Anosov \cite{Anosov} on the whole manifold
and by Smale \cite{Smale} on the nonwandering set (Axiom~A), provides
the classical framework for shadowing and structural stability. For
$C^1$-diffeomorphisms of smooth closed manifolds, structural stability
is characterized by Axiom~A and the strong transversality condition
\cite{WenBook}. We develop a stability theory for global dynamics on
Banach spaces based on generalized hyperbolicity \cite{TikhomirovGH},
allowing discontinuous splittings and the absence of expansivity.

We work with $C^1$-diffeomorphisms of a Banach space $\Bb$ whose
derivatives and inverse derivatives are globally bounded and whose
derivatives are uniformly continuous. This is the class $\DUC(\Bb)$
of Section~\ref{sec:setting}. Generalized hyperbolicity requires a
splitting $\Bb=E_x^s\oplus E_x^u$ with uniformly bounded projections
and uniform forward and backward exponential contraction on the
respective spaces. The invariance conditions are inclusions, which
may be strict, and no continuity of the splitting or
finite-dimensionality of the unstable spaces is assumed. The
definition builds on generalized hyperbolicity for linear operators
\cite{BernardesEtal,CiriloGollobitPujals} and Pilyugin's
$(C,\lambda)$-structure \cite{PilyuginGeneralizations}.

On smooth closed manifolds, the tangent-bundle version of this
definition is equivalent to Axiom~A and strong transversality
\cite[Theorem~3]{TikhomirovGH}. Allowing discontinuity is necessary
even for elementary Morse-Smale systems: for a north-south
diffeomorphism of a circle, the stable dimension is zero at the source
and full at the sink, so a global generalized-hyperbolic splitting
cannot be continuous.

Generalized hyperbolicity allows dynamics that classical hyperbolicity
excludes. Already for linear operators, generalized-hyperbolic maps can
have shadowing without expansivity and a dense set of points homoclinic
to the origin \cite{BernardesEtal,CiriloGollobitPujals}. Nonlinear
shifts provide a nonlinear counterpart \cite[Example~19]{TikhomirovGH},
and the Morse--Smale product of \cite[Example~20]{TikhomirovGH} exhibits
arbitrarily large variation in stable and unstable dimensions within a
single system. 

We introduce a new method for stability based on continuous selection.
Orbitwise estimates define convex sets of admissible initial vectors for
finite-time recurrence problems. Since the recurrence data are continuous
and these sets contain strictly feasible points, they admit continuous
selections, by Michael's theorem \cite{MichaelI} or directly by a
partition of unity (Remark~\ref{rem:partition-of-unity}). Averaging and
successive corrections then yield exact solutions of cohomological
equations, even when the underlying splitting is discontinuous.

This method yields our general stability theorem: a
generalized-hyperbolic diffeomorphism admits continuous
semiconjugacies in both directions under sufficiently small bounded
Lipschitz perturbations. More precisely, if $f\in\DUC(\Bb)$ satisfies
generalized hyperbolicity and $g=f+\Delta$, where
$\norm\Delta_\infty,\Lip(\Delta)\leq d$ with $d>0$ sufficiently small,
then there exist bounded continuous fields $h,k$ such that
\[
 g\circ(\Id+h)=(\Id+h)\circ f,
 \qquad
 f\circ(\Id+k)=(\Id+k)\circ g,
\]
with $\norm h_\infty,\norm k_\infty\leq Md$, where $M$ is independent
of the perturbation (Theorem~\ref{thm:main}).
The second semiconjugacy requires much less of the perturbation:
$\Id+k$ exists, with the same bound, for every homeomorphism $g$ of
$\Bb$ with $\sup_{x\in\Bb}\abs{g(x)-f(x)}\leq d$, without any Lipschitz
assumption on $g-f$ (Theorem~\ref{thm:topological-stability}).
Thus generalized hyperbolicity implies topological stability in the
sense of Walters \cite{WaltersTopStability}
(Definition~\ref{def:stability}).  Remark~\ref{rem:trajectory-set-stability}
compares this with the classical results and records the resulting
estimate between complete-trajectory sets in the uniform-in-time
Hausdorff distance, a quantitative form of strong tolerance stability
\cite{MazurTolerance}.

Passing from semiconjugacy to full structural stability raises a
further difficulty: without expansivity, the usual uniqueness
argument does not show that the two semiconjugacies are mutually
inverse. Continuous selection provides solutions of the cohomological
equations but no canonical choice of them, so the composition of the
two semiconjugacies is not controlled. We overcome this difficulty for
the infinite Morse-Smale product $f(x)_j=a(x_j)$ from
\cite[Example~20]{TikhomirovGH}, under the uniform scalar assumptions of
Subsection~\ref{sec:product-results}. Every generalized-hyperbolic
splitting of this product is discontinuous, so the earlier stability
theorem \cite[Theorem~16]{TikhomirovGH} does not apply. Our structural stability theorem holds on every $\ell^p(\Zz)$, $1\leq p\leq+\infty$, and allows
perturbations coupling all coordinates
(Theorem~\ref{thm:product-structural-stability}).

For this product, both semiconjugacy equations are solved with Green
operators compatible with composition. The forward equation is the same
as in Theorem~\ref{thm:main}, and its solution operator is linear. The
reverse equation, in contrast, is a nonlinear cohomological equation:
its coefficient $Df(y+u(y))$ is evaluated at the point corrected by the
unknown displacement $u$, not at $y$ as in Theorem~\ref{thm:main}.
Accordingly, its solution operator is nonlinear and is defined by a
normalization in which the cutoff is also evaluated at the corrected
point. Pullback and remainder estimates then force the semiconjugacies
to be mutually inverse without expansivity.

In particular, for $p=+\infty$ the product is a structurally stable
diffeomorphism with uncountably many hyperbolic fixed points, pairwise
at distance at least $1$.
A description in the spirit of
Axiom~A and strong transversality, based on a decomposition of the
nonwandering set into finitely many basic sets ordered by a filtration,
does not seem available for such a system. Generalized hyperbolicity is
imposed on the whole phase space and requires no such decomposition.
For $p<+\infty$ the fixed points form a
countably infinite set with finite but unbounded stable dimensions. None
of these products has a compact global attractor
(Corollary~\ref{rem:product-uncountably-many}).

The continuous-selection argument underlying the general
semiconjugacy theorem applies more broadly to cocycles. For a
generalized-hyperbolic cocycle $(\alpha,A)$ satisfying the uniform
bounds and uniform continuity assumptions of
Subsection~\ref{sec:induced-shadowing-results}, every
bounded continuous forcing $w$ admits a bounded continuous solution of
\[
 v(\alpha(x))-A(x)v(x)=w(x),
 \qquad
 \norm v_\infty\leq K_c\norm w_\infty,
\]
where $K_c$ depends only on the hyperbolicity constants
(Theorem~\ref{thm:cocycle-D-surj}). For derivative cocycles, this
gives a global Banach-space analogue of infinitesimal stability
\cite{ManeInfinitesimal,ManeAS}. The semiconjugacy proof applies
this result to $(f,Df)$ and $(g,Df)$, after establishing that
generalized hyperbolicity persists under the change of base from
$f$ to $g$.
The cocycle analysis also gives Lipschitz shadowing for the induced
operator $T_{\alpha,A}$ on $C_b^0(\Bb,\Bb)$, defined by
$(T_{\alpha,A}v)(\alpha(x))=A(x)v(x)$
(Theorem~\ref{thm:induced-shadowing}).

The earlier work \cite[Theorem~11]{TikhomirovGH} established
Lipschitz shadowing, Lipschitz periodic shadowing, density of periodic
points in the chain-recurrent set, and robustness in the uniform
$C^1$ distance. Its stability theorem required a uniformly continuous
splitting and, for full conjugacy, an additional compatibility
condition \cite[Theorem~16]{TikhomirovGH}. The present results extend
the semiconjugacy theory to discontinuous splittings and establish
structural stability of the product beyond those earlier hypotheses.
Together, these results support generalized hyperbolicity as an
analogue of Axiom~A and strong transversality for global Banach-space
dynamics. Whether generalized hyperbolicity always implies
structural stability in this category remains open.

Section~\ref{sec:setting} gives the setting, states the results,
and presents the proof scheme and the open questions; the remaining
sections contain the proofs.
Section~\ref{sec:Michael} collects the dynamical and
functional-analytic tools. Section~\ref{sec:linear-input} develops
the cocycle theory, and Section~\ref{sec:semiconjugacies} constructs
the semiconjugacies. Section~\ref{sec:infinite-morse-smale} proves
structural stability of the product, with the scalar estimates
established in Appendix~\ref{app:product-scalar-pseudotrajectories}.

\subsection{Context and related directions}
\label{sec:historical-overview}

The notion of structural stability goes back to Andronov and Pontryagin's
work on planar differential equations \cite{AndronovPontryagin}.
Peixoto subsequently characterized structurally stable vector fields
on compact surfaces \cite{PeixotoSurfaces}.

\textbf{Classical stability, shadowing, and continuous solvability.}
For diffeomorphisms of smooth closed manifolds, the Palis--Smale program
\cite{PalisSmale} led to the characterization of $C^1$ structural
stability by Axiom~A and strong transversality: sufficiency was proved
by Robbin and Robinson \cite{Robbin,Robinson}, and necessity by
Ma\~n\'e \cite{ManeStability}, building on his ergodic closing lemma
\cite{ManeClosing}.

The shadowing theory originating with Anosov and Bowen
\cite{Anosov,BowenShadowing} gives a quantitative counterpart.
Pilyugin and the author proved that Lipschitz shadowing characterizes
structural stability \cite{PilyuginTikhomirov}; see also
\cite{PilyuginSakai,TikhomirovHolder}. 
Variational shadowing gives another characterization \cite{PilyuginVariational}. 
Related developments include the characterization of $\Omega$-stability by Lipschitz periodic
shadowing \cite{OsipovPilyuginTikhomirov}, and central shadowing for
partially hyperbolic systems \cite{KryzhevichTikhomirov}.

The analytical approach studies the conjugacy equation on spaces of
continuous maps \cite{MoserAnosov}. Mather
\cite{Mather} characterized Anosov diffeomorphisms by hyperbolicity
of the induced operator on continuous vector fields, while Ma\~n\'e
related surjectivity of the linearized conjugacy operator to Axiom~A
and strong transversality \cite{ManeInfinitesimal,ManeAS}.

\textbf{Endomorphisms and the choice of phase space.}
For noninvertible maps, unstable directions may depend on the
prehistory, leading naturally to inverse-limit spaces
\cite{PrzytyckiAnosov}.  Closing lemmas
provide foundations for their stability theory
\cite{WenClosingNonsingular,WenClosingSingularities};
see \cite{AnosovZhuzhomaClosing} for a survey of closing lemmas.
Berger and Kocsard \cite{BergerKocsard} prove $C^1$ inverse-limit
structural stability under Axiom~A and their strong transversality
condition, including for maps with critical points.  This conjugacy
need not descend to the original manifold, where additional
conditions are required
\cite{BergerAttractorRepellor,IglesiasEtalTorus,ManePughEndomorphisms,PrzytyckiAnosov}.
Extending our continuous-solvability
approach to the original phase space would require compatibility
between different prehistories.

\textbf{Linear dynamics and cocycles in Banach spaces.}
Pugh's Banach-space version of the Grobman--Hartman theorem
\cite{PughHartman} establishes structural stability of hyperbolic
linear operators under small bounded Lipschitz perturbations.
Generalized hyperbolicity \cite{BernardesEtal,CiriloGollobitPujals, BernardesLocallyConvex}
allows strict invariance inclusions and admits nonhyperbolic
operators with shadowing and structural stability
\cite{BernardesMessaoudi}. These results motivate the nonlinear
definition of \cite{TikhomirovGH} and the search for conjugacy
arguments without expansivity.

The connection between bounded solvability, exponential dichotomies,
shadowing, and stability is developed in
\cite{DragicevicAdmissibility,HenryDichotomies,PalmerDichotomiesSurvey}.
Our cocycle arguments retain this use of orbitwise estimates and add
a continuous-selection construction to obtain solutions continuous
in the base point. We also use the passage from finite-time to two-sided
shadowing for linear operators due to Bernardes and Peris
\cite{BernardesPeris}. A different use of cocycles appears in
probabilistic shadowing for linear skew products
\cite{MonakovTikhomirov,TikhomirovSkewProducts}.

\textbf{Flows, singularities, and variation of stable dimensions.}
For flows, structural stability is understood up to orbit equivalence
\cite{HayashiFlows,RobinsonFlows,WenFlows}, and Lipschitz shadowing
allows controlled changes of time
\cite{PalmerPilyuginTikhomirov,PilyuginFlows}; see
\cite{FisherHasselblatt}.  The choice of time reparametrization matters:
$C^1$-robust oriented shadowing implies $\Omega$-stability
\cite{GanLiTikhomirov}, but need not imply structural stability
\cite{PilyuginTikhomirovOriented}.

To treat singularities, the extended linear Poincar\'e flow retains
limiting flow directions \cite{LiGanWenPoincare}.  Combined with
reparametrization cocycles, it leads to multisingular hyperbolicity,
which accommodates singularities of different stable indices
\cite{BonattiDaLuz}.  This connects conceptually with the dimension
variation motivating our definition.  In that framework the splittings
are continuous over an enlarged base; ours may be discontinuous over
the original phase space.

\textbf{Delay and evolution equations.}
Noninvertible dynamics on function spaces also arise from delay and
evolution equations
\cite{DiekmannDelay,HaleVerduynLunel,HenryParabolic,SellYou}.
The hyperbolicity theories of Lani-Wayda and Steinlein--Walther
give shadowing and symbolic dynamics for noninvertible Banach-space
maps \cite{LaniWayda,SteinleinWaltherShadowing,SteinleinWalther}.
They are connected with hyperbolic periodic orbits and homoclinic
dynamics in delay equations
\cite{LaniWaydaWaltherChaosI,WaltherHomoclinic}, and with persistence
of Poincar\'e dynamics under perturbations
\cite{LaniWaydaPersistence}. These are the closest earlier notions of
hyperbolicity for nonlinear Banach-space dynamics. Our setting permits
discontinuous projections and strict inclusions in both invariance
conditions \cite{TikhomirovGH}; expansivity may consequently fail,
whereas it is available in Lani-Wayda's shadowing and persistence
theory. 

For evolution equations, Arrieta, Carvalho, and Takaessu
\cite{ArrietaCarvalhoTakaessu} prove shadowing near the compact
global attractor of a Morse-Smale semigroup in a Hilbert space.
Their assumptions allow weaker global derivative regularity than
ours, but require finitely many equilibria and finite-dimensional
unstable spaces. Our product theorem concerns global dynamics with
infinitely many hyperbolic fixed points and no compact global
attractor. Extending the present methods to semiflows would require
changes to the treatment of backward trajectories and to the
regularity assumptions; see Subsection~\ref{sec:future-research}.

\textbf{Fluid dynamics.}
The work of Mailybaev and Raibekas on spontaneous stochasticity
\cite{MailRaib1,MailRaib2} suggests extensions to Fr\'echet spaces or
manifolds. Gravitational fingering in incompressible porous media
\cite{otto-menon-2005} leads, through travelling-wave reductions of
related models \cite{PetrovaTikhomirovEfendiev2025}, to questions of
normal hyperbolicity for flows in Banach spaces. These applications were the original motivation for this work, but remain beyond the present framework; our results are a step towards
such extensions.

The questions raised by these directions are collected in
Subsection~\ref{sec:future-research}.

\section{Setting and statements of the results}
\label{sec:setting}

\subsection{The uniform global \texorpdfstring{$C^1$}{C1} category}

Let $\Bb$ be a Banach space.  We use the uniform global $C^1$ category \cite{TikhomirovGH}.  
Let $f:\Bb\to\Bb$ be a $C^1$-diffeomorphism and suppose that there exist $R>0$ and a nondecreasing function $r:\Rr^+\to\Rr^+$ such that
\begin{equation*}
 \abs{f(x+v)-f(x)-Df(x)v}\leq \abs v\,r(\abs v),
 \qquad x,v\in\Bb,
\end{equation*}
\begin{equation}
\label{eq:C1-2}
 \norm{Df(x)}\leq R,
 \qquad
 \norm{D(f^{-1})(x)}\leq R,
 \qquad x\in\Bb,
\end{equation}
\begin{equation}
\label{eq:C1-r1}
 \norm{Df(x+v)-Df(x)}\leq r(\abs v),
 \qquad x,v\in\Bb,
\end{equation}
\begin{equation}
\label{eq:C1-r2}
 \lim_{t\to0}r(t)=0.
\end{equation}
We denote the class of such diffeomorphisms by $\DUC(\Bb)$.  For
$f,g\in\DUC(\Bb)$, the extended uniform $C^1$ distance is
\begin{equation*}
 \norm{f-g}_{C^1}
 :=\sup_{x\in\Bb}\abs{f(x)-g(x)}
   +\sup_{x\in\Bb}\norm{Df(x)-Dg(x)}.
\end{equation*}
Its value is allowed to be infinite.

Define the nonlinear remainder
\begin{equation}
\label{eq:sdef}
 s_f(x,v):=f(x+v)-f(x)-Df(x)v,
 \qquad x,v\in\Bb.
\end{equation}
Note that for every $\rho>0$ and every $x,v,\widetilde v\in\Bb$ satisfying
$\abs v,\abs{\widetilde v}\leq\rho$, one has
\begin{equation}
\label{eq:s-rho}
 \abs{s_f(x,v)-s_f(x,\widetilde v)}
 \leq r(\rho)\abs{v-\widetilde v}.
\end{equation}
In particular, for every $\varepsilon>0$ there exists $\rho>0$ such that
\begin{equation*}
 \abs{s_f(x,v)-s_f(x,\widetilde v)}
 \leq\varepsilon\abs{v-\widetilde v}, \quad x\in\Bb, \abs v,\abs{\widetilde v}\leq\rho.
\end{equation*}

\subsection{Generalized hyperbolicity}

We use the same definition as in \cite{TikhomirovGH}.

\begin{defin}[Generalized hyperbolicity]
\label{def:GH}
A diffeomorphism $f\in\DUC(\Bb)$ satisfies \emph{generalized
hyperbolicity} if there exist constants $C>0$, $\lambda\in(0,1)$ and a
family of projections
$P_x,Q_x:\Bb\to\Bb$, $x\in\Bb$,
such that, writing
$E_x^s:=P_x(\Bb)$,
$E_x^u:=Q_x(\Bb)$,
the following conditions hold.
\begin{enumerate}[label=\textup{(GH\arabic*)},leftmargin=12mm]
\item \textbf{Splitting.} For every $x\in\Bb$,
\[
E_x^s\oplus E_x^u=\Bb, \qquad
 P_x+Q_x=\Id,
 \qquad
 \norm{P_x},\norm{Q_x}\leq C.
\]

\item \textbf{Inclusion.} For every $x\in\Bb$,
\[
 Df(x)E_x^s\subseteq E_{f(x)}^s,
 \qquad
 D(f^{-1})(x)E_x^u\subseteq E_{f^{-1}(x)}^u.
\]

\item \textbf{Exponential estimates.} For every $x\in\Bb$ and every
integer $n>0$,
\[
 \abs{Df^n(x)v_x^s}\leq C\lambda^n\abs{v_x^s},
 \qquad v_x^s\in E_x^s,
\]
\[
 \abs{D(f^{-n})(x)v_x^u}\leq C\lambda^n\abs{v_x^u},
 \qquad v_x^u\in E_x^u.
\]
\end{enumerate}
No finite-dimensionality assumption is imposed on $E_x^u$.
No continuity, uniform continuity, or measurability of the maps
$x\mapsto P_x,Q_x$ is assumed.
\end{defin}

The inclusions in \textup{(GH2)} may be strict, and the projections
need not depend continuously on the base point.  Pointwise application of
$P_x$ and $Q_x$ therefore need not preserve the continuity of vector fields.
Continuous cohomological solvability is established in
Theorem~\ref{thm:cocycle-D-surj}; the change
of base needed for the reverse equation is treated in
Theorem~\ref{thm:g-Df-GH}.
For compact hyperbolic sets, invariant splittings are continuous;
their finer regularity is studied in
\cite{KatokHasselblatt,HasselblattRegularity,HasselblattRegularityII,HasselblattWilkinson}.

\textbf{Compact-manifold examples.}
On smooth closed manifolds, generalized hyperbolicity is equivalent to
Axiom~A and strong transversality \cite[Theorem~3]{TikhomirovGH}; in
particular, Morse--Smale diffeomorphisms are examples
\cite{ShubGlobalStability}.  The simplest instance is a north--south
diffeomorphism of the circle, where the stable dimension is zero at the
source and full at the sink, so no generalized-hyperbolic splitting on
the whole circle is continuous.  The scalar map $a$ of
Subsection~\ref{sec:product-results}, with a source at $0$ and sinks at
$\pm1$, is the noncompact version of this picture, and the product built
from it inherits the discontinuity
(Corollary~\ref{rem:product-uncountably-many}).

\textbf{Weighted and nonlinear shifts.}
On $\ell^p(\Zz)$, $1\leq p\leq+\infty$, consider the weighted shift \cite{CiriloGollobitPujals, BernardesEtal}
\[
 (f(x))_{j+1}=w_jx_j,
 \qquad
 w_j=\begin{cases}2,&j<0,\\[1mm]1/2,&j\geq0.\end{cases}
\]
The stable and unstable spaces consist of sequences supported on the
nonnegative and negative indices, respectively.  Both invariant
inclusions are strict: shifting carries one unstable coordinate into
the stable indices.  Nonlinear versions have
$(f(x))_{j+1}=a_j(x_j)$, with scalar diffeomorphisms $a_j$ satisfying
the uniform expansion, contraction, and regularity assumptions of
\cite[Example~19]{TikhomirovGH}.
These shifts have constant splittings, yet are not expansive and admit
nonunique shadowing while remaining structurally stable
\cite{BernardesMessaoudi,TikhomirovGH}.  They illustrate phenomena
enabled by strict invariance inclusions; the product in
Subsection~\ref{sec:product-results} also exhibits discontinuity of the
splitting.

\subsection{Bi-sided semiconjugacy}

\begin{defin}[Semi-structural, structural, and topological stability]
\label{def:stability}
Let $f\in\DUC(\Bb)$.  We call $f$ \emph{semi-structurally stable} if
for every $\varepsilon>0$ there exists $d_0>0$ such that every
$g\in\DUC(\Bb)$ with $\norm{f-g}_{C^1}<d_0$ admits continuous maps
$h,k:\Bb\to\Bb$ satisfying
$
 \max\{\norm h_\infty,\norm k_\infty\}<\varepsilon
$
and, writing $H=\Id+h$ and $K=\Id+k$,
\[
 g\circ H=H\circ f,
 \qquad
 f\circ K=K\circ g.
\]
Here $H$ and $K$ are called semiconjugacies.\footnote{Surjectivity of either
map is not required: in finite dimensions, $\Id+h$ with $h$ bounded
and continuous is surjective by Brouwer's fixed point theorem, but in infinite-dimensional Banach spaces it
need not be.}

We call $f$ \emph{structurally stable} if, under the same quantifiers,
the maps can be chosen so that
$
 K\circ H=H\circ K=\Id.
$
Thus $H$ and $K$ are mutually inverse homeomorphisms.

We call $f$ \emph{topologically stable} if for every $\varepsilon>0$
there exists $d_0>0$ such that every homeomorphism $g:\Bb\to\Bb$ with
$\sup_{x\in\Bb}\abs{f(x)-g(x)}<d_0$ admits a continuous map
$k:\Bb\to\Bb$ with $\norm k_\infty<\varepsilon$ such that $K=\Id+k$
satisfies
$
 f\circ K=K\circ g.
$
\end{defin}

Semi-structural stability requires
semiconjugacies in both directions, but only for perturbations in
$\DUC(\Bb)$ that are small in the $C^1$ sense; topological stability
requires one semiconjugacy, but for every homeomorphism uniformly close
to $f$.  
Theorems~\ref{thm:main} and~\ref{thm:topological-stability} below show
that generalized hyperbolicity implies semi-structural stability and
topological stability.

\begin{thm}[Bi-sided semiconjugacy]
\label{thm:main}
Assume that $f\in\DUC(\Bb)$ satisfies generalized hyperbolicity.  Then
there exist constants $M>0$ and $d_0>0$ such that the following holds.
Let $\Delta:\Bb\to\Bb$ be a bounded Lipschitz map satisfying
 $\norm{\Delta}_{\infty}\leq d$,
 $\Lip(\Delta)\leq d$, with 
 $0<d<d_0$,
and put
$
 g:=f+\Delta$.
Then $g$ is a bi-Lipschitz homeomorphism, and there exist
$h,k\in C_b^0(\Bb,\Bb)$ satisfying

\begin{equation}
\label{eq:main-semiconjugacy-reverse}
  g\circ(\Id+h)=(\Id+h)\circ f, \qquad f\circ(\Id+k)=(\Id+k)\circ g,
\end{equation}
\begin{equation*}
 \abs{h(x)},\abs{k(x)}\leq Md,
 \qquad x\in\Bb.
\end{equation*}
\end{thm}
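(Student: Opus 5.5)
The plan is to recast both semiconjugacy equations as fixed-point problems for cohomological equations over generalized-hyperbolic cocycles. Each is then solved with the bounded continuous solution operator of Theorem~\ref{thm:cocycle-D-surj}, using a successive-correction scheme that never requires the solution operator to be linear.

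\emph{Step 0: $g$ is a bi-Lipschitz homeomorphism.} Write $f^{-1}\circ g=\Id+\Psi$ with $\Psi:=f^{-1}\circ(f+\Delta)-\Id$. By \eqref{eq:C1-2}, $\Lip(\Psi)\leq Rd$ and $\norm{\Psi}_\infty\leq Rd$. For $Rd<1$ the map $\Id+\Psi$ is therefore a bi-Lipschitz homeomorphism, by the contraction principle applied to $y\mapsto z-\Psi(y)$. Composing with $f$ gives the claim for $g$.

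\emph{Step 1: the forward equation.} Expand $g(x+h(x))=f(x)+Df(x)h(x)+s_f(x,h(x))+\Delta(x+h(x))$ using \eqref{eq:sdef}. Then $g\circ(\Id+h)=(\Id+h)\circ f$ becomes
\[
 (Lh)(x):=h(f(x))-Df(x)h(x)=\Phi(h)(x):=s_f(x,h(x))+\Delta(x+h(x)).
\]
The operator $L$ is linear on $C_b^0(\Bb,\Bb)$. Theorem~\ref{thm:cocycle-D-surj}, applied to the cocycle $(f,Df)$, gives a (possibly nonlinear) map $S$ with $LSw=w$ and $\norm{Sw}_\infty\leq K_c\norm w_\infty$. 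Start from $h_0=0$ and set
\[
 h_{n+1}=h_n+S(\Phi(h_n)-Lh_n).
\]
Then $Lh_{n+1}=\Phi(h_n)$, so the new residual is $e_{n+1}=\Phi(h_{n+1})-\Phi(h_n)$. If all iterates stay in the ball of radius $\rho$, then \eqref{eq:s-rho} gives $\norm{e_{n+1}}_\infty\leq(r(\rho)+d)K_c\norm{e_n}_\infty$. Choose $\rho$ and $d_0$ so that $(r(\rho)+d_0)K_c\leq 1/2$, and note $\norm{e_0}_\infty\leq d$. Then $\sum\norm{h_{n+1}-h_n}_\infty\leq 2K_cd$, so the iterates stay in the ball once $2K_cd_0\leq\rho$. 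The limit $h$ is continuous, satisfies $Lh=\Phi(h)$ by continuity of $L$ and $\Phi$, and obeys $\norm h_\infty\leq 2K_cd$.

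\emph{Step 2: the reverse equation.} Similarly, $f\circ(\Id+k)=(\Id+k)\circ g$ is equivalent to
\[
 k(g(y))-Df(y)k(y)=s_f(y,k(y))-\Delta(y),
\]
which is a cohomological equation for the cocycle $(g,Df)$ over the base $g$. Once this cocycle is known to be generalized hyperbolic with constants independent of $\Delta$, and to satisfy the uniformity hypotheses of Theorem~\ref{thm:cocycle-D-surj}, the same correction scheme gives $k$ with $\norm k_\infty\leq 2K_c'd$. Those uniformity hypotheses are bounded $Df$, uniformly continuous $Df$, and $g$ bi-Lipschitz by Step~0. Taking $M=2\max(K_c,K_c')$ finishes the proof.

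\emph{Main obstacle: persistence of generalized hyperbolicity for $(g,Df)$.} This is the content of Theorem~\ref{thm:g-Df-GH}, and I expect it to be the hard step, because the splitting of $f$ is discontinuous and cannot simply be transported. The plan is orbitwise. Each $g$-orbit $(g^n y)_{n\in\Zz}$ is a $d$-pseudotrajectory of $f$. By the Lipschitz shadowing of \cite[Theorem~11]{TikhomirovGH}, it is $Ld$-shadowed by a true orbit $x_n=f^n(x_0)$. Along $(x_n)$ the GH splitting gives a two-sided (inclusion-type) exponential dichotomy for $Df(x_n)$. By \eqref{eq:C1-r1}, $\norm{Df(g^ny)-Df(x_n)}\leq r(Ld)$, so a roughness argument for such dichotomies produces projections $P'_{g^ny}$ with constants $C',\lambda'$ depending only on $C,\lambda,R$. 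These are built from the stable cone of forward-bounded solutions and a complementary backward-bounded family, keeping only inclusions rather than invariance. Since the $g$-orbits partition $\Bb$, choosing one shadowing orbit per $g$-orbit defines the splitting at every point. No continuity in $y$ is needed, since Definition~\ref{def:GH} imposes none.
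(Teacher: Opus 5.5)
Your architecture is the paper's. You solve the forward equation by successive corrections with a nonlinear selector from Theorem~\ref{thm:cocycle-D-surj} for $(f,Df)$, which is exactly Lemma~\ref{lem:successive-corrections}. You treat the reverse equation $\mathcal D_{g,Df}k=s_f(\cdot,k)-\Delta$ the same way once $(g,Df)$ is known to be generalized hyperbolic with constants independent of $\Delta$. If you simply invoke Theorem~\ref{thm:g-Df-GH} at that point, the argument goes through.

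Your own sketch of that step, however, has a genuine gap at periodic orbits of $g$. Suppose $g^m(y)=y$. Applying the robustness lemma along a non-periodic shadowing orbit yields projections $\widetilde P_n$, $n\in\Zz$, with no reason to satisfy $\widetilde P_{n+m}=\widetilde P_n$. So ``the splitting at $g^n(y)$'' is not well defined. Picking one index per point destroys \textup{(C-GH2)} at the wrap-around step: you only know $Df(g^{m-1}(y))\widetilde E^s_{m-1}\subseteq\widetilde E^s_m$, not $\subseteq\widetilde E^s_0$.

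The paper repairs this in two moves. It shadows the periodic pseudotrajectory by a periodic $f$-orbit (Theorem~\ref{thm:GH-shadowing}\,(ii)). It then applies the periodic robustness statement, Lemma~\ref{lem:sequence-GH-robustness}\,\ref{item:sequence-robustness-periodic}, whose constants do not depend on $m$.

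Relatedly, the perturbed spaces cannot be built from forward-bounded solutions as in classical roughness. With strict inclusions that set is generally not closed: for the weighted shift on $\ell^p$, $p<\infty$, every finitely supported vector decays forward, so the forward-bounded vectors form a dense proper subspace. You genuinely need the robustness lemma of \cite{TikhomirovGH} here.

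Step~0 also contains a false estimate: $\Psi=f^{-1}\circ g-\Id$ need not satisfy $\Lip(\Psi)\leq Rd$. For constant $\Delta\equiv c$ one has $\Lip(\Delta)=0$, yet $\Psi(x)=f^{-1}(f(x)+c)-x$ is in general not constant. Its Lipschitz constant is controlled only through the modulus $r$. The correct bound from the integral mean-value formula is $\Lip(\Psi)\leq R^3r(Rd)+Rd$, which still tends to $0$, so your conclusion survives. It is simpler to factor on the other side, as the paper does: $g=(\Id+\Delta\circ f^{-1})\circ f$, where $\Lip(\Delta\circ f^{-1})\leq Rd$ does hold.
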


\begin{corr}
\label{cor:two-semiconjugacies}
Every $f\in\DUC(\Bb)$ satisfying generalized hyperbolicity is
semi-structurally stable.
\end{corr}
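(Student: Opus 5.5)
The corollary follows from Theorem~\ref{thm:main} once we check that a perturbation which is small in the extended uniform $C^1$ distance is a bounded Lipschitz perturbation in the sense of that theorem. So the plan is to convert the $C^1$-smallness in Definition~\ref{def:stability} into the hypotheses $\norm\Delta_\infty,\Lip(\Delta)\leq d$, and then choose the threshold so that the bound $Md$ falls below $\varepsilon$.

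Let $M>0$ and $d_0>0$ be the constants that Theorem~\ref{thm:main} provides for $f$. Given $\varepsilon>0$, set
\[
d_0':=\min\{d_0,\ \varepsilon/(2M)\}.
\]
Let $g\in\DUC(\Bb)$ satisfy $\norm{f-g}_{C^1}<d_0'$, and put $\Delta:=g-f$ and $d:=\norm{f-g}_{C^1}$.

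We first check the two hypotheses on $\Delta$.
\begin{enumerate}[label=(\roman*)]
\item By the definition of the $C^1$ distance, $\norm\Delta_\infty\leq d$.
\item $\Delta$ is $C^1$ with $\sup_x\norm{D\Delta(x)}=\sup_x\norm{Dg(x)-Df(x)}\leq d$. Since $\Bb$ is convex, the mean value inequality along the segment $[x,y]$ gives $\abs{\Delta(x)-\Delta(y)}\leq d\abs{x-y}$. Hence $\Lip(\Delta)\leq d$.
\end{enumerate}

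Now split into two cases.
\begin{itemize}
\item If $d>0$, then $0<d<d_0$, so Theorem~\ref{thm:main} applies to $g=f+\Delta$. It yields $h,k\in C_b^0(\Bb,\Bb)$ satisfying \eqref{eq:main-semiconjugacy-reverse} with $\norm h_\infty,\norm k_\infty\leq Md\leq \varepsilon/2<\varepsilon$.
\item If $d=0$, then $g=f$, and we take $h=k=0$.
\end{itemize}
In both cases $H=\Id+h$ and $K=\Id+k$ are continuous, satisfy $g\circ H=H\circ f$ and $f\circ K=K\circ g$, and obey $\max\{\norm h_\infty,\norm k_\infty\}<\varepsilon$. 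This is semi-structural stability with threshold $d_0'$.

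There is no real obstacle here; all the substance lies in Theorem~\ref{thm:main}. The only points needing care are the passage from the derivative bound to a global Lipschitz bound, which uses convexity of the domain $\Bb$, and the strict-versus-nonstrict inequalities, which are handled by the factor $2$ in the choice of $d_0'$ and by treating $d=0$ separately.
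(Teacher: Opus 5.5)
Your proof is correct and is essentially the paper's argument: you bound $\norm{\Delta}_\infty$ and $\Lip(\Delta)$ for $\Delta=g-f$ by the $C^1$ distance and then apply Theorem~\ref{thm:main}. The only difference is cosmetic. The paper fixes a threshold $0<d<\min\{d_0,\varepsilon/M\}$ and applies the theorem with that $d$, which avoids your separate case $d=0$ and makes the factor $2$ unnecessary.
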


\begin{proof}
Given $\varepsilon>0$, take $M,d_0$ from Theorem~\ref{thm:main} and
choose $0<d<\min\{d_0,\varepsilon/M\}$.  If $g\in\DUC(\Bb)$ satisfies
$\norm{f-g}_{C^1}<d$, then $\Delta:=g-f$ has
$\norm\Delta_\infty<d$ and 
$
 \Lip(\Delta)
 <d.
$
Theorem~\ref{thm:main} gives the required semiconjugacies with
$\norm{h}_\infty,\norm{k}_\infty\leq Md<\varepsilon$.
\end{proof}

The reverse semiconjugacy requires much less of the perturbation: it
suffices that $g$ be a homeomorphism uniformly close to $f$.

\begin{thm}[Topological stability]
\label{thm:topological-stability}
Assume that $f\in\DUC(\Bb)$ satisfies generalized hyperbolicity.  Then
there exist constants $M>0$ and $d_0>0$ such that the following holds.
Let $g:\Bb\to\Bb$ be a homeomorphism satisfying
$
 \sup_{x\in\Bb}\abs{g(x)-f(x)}\leq d < d_0.
$
Then there exists $k\in C_b^0(\Bb,\Bb)$ satisfying
\begin{equation}
\label{eq:top-stability-semiconjugacy}
 f\circ(\Id+k)=(\Id+k)\circ g,
 \qquad
 \abs{k(x)}\leq Md,
 \quad x\in\Bb.
\end{equation}
\end{thm}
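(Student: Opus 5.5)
The plan is to rewrite the conjugacy equation as a nonlinear cohomological equation over the base $g$ and solve it by a contraction argument. The linear step uses the cocycle theory of Section~\ref{sec:linear-input}. Writing $K=\Id+k$, the equation $f\circ K=K\circ g$ reads $f(y+k(y))=g(y)+k(g(y))$. Using the remainder \eqref{eq:sdef}, it becomes
\[
 k(g(y))-Df(y)k(y)=f(y)-g(y)+s_f\bigl(y,k(y)\bigr),\qquad y\in\Bb .
\]
This is a cohomological equation for the cocycle $(\alpha,A)=(g,Df)$, with coefficient evaluated at $y$, and a forcing that is the sum of the bounded continuous term $w_0:=f-g$, with $\norm{w_0}_\infty\le d$, and a nonlinear term.

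\textbf{Step 1 (linear solvability over $g$).} I would show that $(g,Df)$ is a generalized-hyperbolic cocycle satisfying the hypotheses of Theorem~\ref{thm:cocycle-D-surj}, with constants depending only on $C,\lambda,R,r$ once $d<d_0$. This is the analogue of Theorem~\ref{thm:g-Df-GH}, but now only $\sup\abs{g-f}\le d$ is available. Every $g$-orbit $y_{n+1}=g(y_n)$ is a $d$-pseudotrajectory of $f$. By \eqref{eq:C1-r1}, $\norm{Df(f(y_n))-Df(y_{n+1})}\le r(d)$. Hence along each $g$-orbit the sequence $Df(y_n)$ is an $r(d)$-perturbation of a generalized-hyperbolic sequence along the true $f$-orbit through nearby points. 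The orbitwise roughness argument (finite-time dichotomy estimates plus the Bernardes--Peris passage to two-sided solvability) should then give, for each $y$, bounded solutions along the $g$-orbit of $y$ with a uniform constant. The continuous-selection construction only needs continuity of $g$, $g^{-1}$ and of $y\mapsto Df(y)$. I would check that the proof of Theorem~\ref{thm:cocycle-D-surj} uses uniform continuity of the base map only through these orbitwise estimates, and redo it with $g$ merely a homeomorphism. The output is a solution operator $\mathcal G:C_b^0\to C_b^0$, possibly nonlinear but with the selection made so that it is Lipschitz in $w$ on bounded sets, with $v\circ g-Df\,v=w$ and $\norm{\mathcal G w}_\infty\le K_c\norm w_\infty$.

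\textbf{Step 2 (fixed point).} Choose $\rho>0$ with $r(\rho)\le 1/(2K_c)$ and define $\Phi(k):=\mathcal G\bigl(w_0+s_f(\cdot,k(\cdot))\bigr)$ on the ball $\{k\in C_b^0:\norm k_\infty\le Md\}$ with $M=2K_c$. By \eqref{eq:s-rho}, $\abs{s_f(y,k(y))}\le r(Md)Md$. So for $d_0$ small, $\Phi$ maps the ball into itself, and it is a contraction provided $\mathcal G$ is linear (or Lipschitz). Since the solution operator from continuous selection is not canonically linear, I would first try to obtain a linear bounded right inverse. Failing that, I would use the successive-correction scheme: solve for $w_0$, then for the defect produced by the nonlinear term, and so on. Each correction is bounded by $K_c r(Md)$ times the previous one, so the series converges in $C_b^0$ to an exact solution with $\norm k_\infty\le Md$.

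\textbf{Main obstacle.} The main difficulty is Step 1. One must establish uniform solvability for the cocycle $(g,Df)$ when $g$ is only a $C^0$-small homeomorphism. Without Lipschitz control on $g-f$, the base $g$ is not in $\DUC(\Bb)$, and the splitting of $f$ cannot be transported to $g$-orbits pointwise. I would attack it orbitwise, never defining a splitting over $g$: finite-time recurrence problems along $g$-orbit segments are solved using the $f$-splittings at the points $y_n$ themselves, treating the $r(d)$-errors as perturbations absorbed by the exponential estimates (GH3). Continuity in $y$ is then recovered only at the end, via the strictly feasible convex admissible sets and continuous selection.
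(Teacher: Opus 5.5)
Your reduction and Step~2 agree with the paper. The equation becomes $\mathcal D_{g,Df}k=s_f(\cdot,k)-\Delta$ with $\Delta=g-f$, and the perturbation cancels from the Lipschitz constant of the right-hand side. The successive-correction scheme you fall back on is exactly Lemma~\ref{lem:successive-corrections}, which needs no linear or Lipschitz solution operator; continuous selection provides no such operator, and none is needed. Theorem~\ref{thm:cocycle-D-surj} is already stated for an arbitrary homeomorphism $\alpha$, so nothing there has to be redone.

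The genuine gap is Step~1, which you flag as the main obstacle but do not resolve. The one-step bound $\norm{Df(f(y_n))-Df(y_{n+1})}\leq r(d)$ does not exhibit $(Df(y_n))_n$ as a small perturbation of a generalized-hyperbolic sequence. The points $z_n=f(y_{n-1})$ satisfy $z_{n+1}=f(y_n)\neq f(z_n)$ in general, so they do not lie on one $f$-orbit and carry no invariant splitting. Your alternative, using the $f$-splittings at the points $y_n$ themselves, fails for the same reason. (GH2) gives $Df(y_n)E^s_{y_n}\subseteq E^s_{f(y_n)}$, but Definition~\ref{def:GH} says nothing relating $E^s_{f(y_n)}$ to $E^s_{y_{n+1}}=E^s_{g(y_n)}$. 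The splitting may be discontinuous and may even change rank between arbitrarily close points (as it must for a north--south map). Along a $g$-orbit these splittings therefore need not be even approximately invariant, so there is no $O(r(d))$ error for (GH3) to absorb.

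The missing idea is Lipschitz shadowing for $f$ (Theorem~\ref{thm:GH-shadowing}). Each $g$-orbit $(y_k)_{k\in\Zz}$ is a $d$-pseudotrajectory of $f$, so there is a single true orbit $x_{k+1}=f(x_k)$ with $\sup_k\abs{x_k-y_k}\leq Md$. Along it, $(Df(x_k))_k$ is generalized hyperbolic with the exactly invariant splittings $E^{s,u}_{x_k}$, and $\norm{Df(x_k)-Df(y_k)}\leq r(Md)$ for every $k$ (note $r(Md)$, not $r(d)$). The robustness lemma for sequences (Lemma~\ref{lem:sequence-GH-robustness}) then makes $(Df(y_k))_k$ generalized hyperbolic with constants $C_1,\lambda'$ independent of the orbit and of $g$. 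This is Theorem~\ref{thm:g-Df-GH}, after which the cocycle theorem applies to $(g,Df)$ with a uniform constant.

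Once repaired this way, your orbitwise viewpoint is a slight simplification of the paper. The paper uses periodic shadowing and periodic robustness only to make the splitting well defined on periodic $g$-orbits. But Lemma~\ref{lem:finite-cont} uses Lemma~\ref{lem:cocycle-SBS} only along the orbit of each fixed base point, so a splitting of $(Df(g^k(x)))_{k\in\Zz}$ chosen separately for each $x$ suffices. The Bernardes--Peris passage plays no role here: Theorem~\ref{thm:cocycle-D-surj} needs only finite-horizon estimates and Birkhoff averaging.
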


\begin{corr}
\label{cor:topological-stability}
Every $f\in\DUC(\Bb)$ satisfying generalized hyperbolicity is
topologically stable.
\end{corr}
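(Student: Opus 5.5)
This corollary follows from Theorem~\ref{thm:topological-stability} by the same quantifier bookkeeping used for Corollary~\ref{cor:two-semiconjugacies}. No new dynamics is needed.

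Fix $\varepsilon>0$ and take the constants $M>0$ and $d_0>0$ given by Theorem~\ref{thm:topological-stability} for $f$. Set
\[
 d_1:=\min\{d_0,\varepsilon/(2M)\}>0 .
\]
We claim that $d_1$ serves as the constant required in Definition~\ref{def:stability} for topological stability.

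Let $g:\Bb\to\Bb$ be a homeomorphism with $\sup_{x\in\Bb}\abs{g(x)-f(x)}<d_1$, and put $d:=\sup_{x\in\Bb}\abs{g(x)-f(x)}$. There is one subtlety. Theorem~\ref{thm:topological-stability} is stated under $d<d_0$, but the value $d=0$ is allowed there, since the hypothesis reads only $\leq d<d_0$.
\begin{itemize}
\item If $d>0$, then $0<d<d_1\leq d_0$, so Theorem~\ref{thm:topological-stability} applies and gives $k\in C_b^0(\Bb,\Bb)$ with $f\circ(\Id+k)=(\Id+k)\circ g$ and $\norm k_\infty\leq Md$.
\item If $d=0$, then $g=f$ and we simply take $k=0$.
\end{itemize}

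In either case $\norm k_\infty\leq Md<Md_1\leq\varepsilon/2<\varepsilon$. The map $K=\Id+k$ is continuous and satisfies $f\circ K=K\circ g$. This is exactly the requirement of Definition~\ref{def:stability}.

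The only point needing care is the non-strict versus strict inequality. Using the factor $\varepsilon/(2M)$ absorbs it. There is no genuine obstacle, since all the analytic content resides in Theorem~\ref{thm:topological-stability}.
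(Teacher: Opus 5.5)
Your proof is correct and follows the paper's argument: both take $M,d_0$ from Theorem~\ref{thm:topological-stability} and choose the stability threshold below $\min\{d_0,\varepsilon/M\}$. The differences are cosmetic. The paper applies the theorem with the fixed positive threshold $d$ itself, which makes your separate case $d=0$ unnecessary. Since that threshold is chosen strictly below $\varepsilon/M$, the bound $\norm{k}_\infty\leq Md<\varepsilon$ is already strict, so the factor $2$ in $\varepsilon/(2M)$ is not needed.
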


\begin{proof}
Given $\varepsilon>0$, take $M,d_0$ from
Theorem~\ref{thm:topological-stability} and choose
$0<d<\min\{d_0,\varepsilon/M\}$.  Every homeomorphism $g$ with
$\sup_{x\in\Bb}\abs{f(x)-g(x)}<d$ then admits $k$ with
$f\circ(\Id+k)=(\Id+k)\circ g$ and $\norm k_\infty\leq Md<\varepsilon$.
\end{proof}

Both theorems are proved in Section~\ref{sec:semiconjugacies}:
Theorem~\ref{thm:topological-stability} is proved first, and the reverse
semiconjugacy in Theorem~\ref{thm:main} is deduced from it.

\begin{remark}[Topological and tolerance stability]
\label{rem:trajectory-set-stability}
Walters proved that Anosov diffeomorphisms are topologically stable
\cite{WaltersTopStability} and, more generally, that every expansive
homeomorphism of a compact metric space with the shadowing property is
topologically stable \cite{WaltersPOTP}.  Nitecki \cite{Nitecki} proved
lower semi-stability, a closely related one-sided notion, for
diffeomorphisms of smooth closed manifolds satisfying Axiom~A and the
strong transversality condition; these need not be expansive.
Theorem~\ref{thm:topological-stability} differs from both results in
its setting and in its proof.  The phase space is a Banach space, so
compactness is unavailable; the map need not be expansive, so the
uniqueness of shadowing trajectories, which yields the continuity of
the semiconjugacy in Walters' argument, is unavailable as well; and the
finite decomposition of the nonwandering set used in the theory of
Axiom~A diffeomorphisms does not seem available, as the product of
Subsection~\ref{sec:product-results} shows.  

The semiconjugacies also control complete trajectories.
For a map $q:\Bb\to\Bb$, let
\[
 \mathcal T_q:=
 \bigl\{\mathbf x=(x_n)_{n\in\Zz}\in\Bb^{\Zz}:
             x_{n+1}=q(x_n)\text{ for all }n\in\Zz\bigr\}
\]
be its complete-trajectory set.  Equip $\Bb^{\Zz}$ with the extended
metric
\[
 d_\infty(\mathbf x,\mathbf y):=
       \sup_{n\in\Zz}\abs{x_n-y_n}\in[0,+\infty].
\]
For nonempty $A,B\subset\Bb^{\Zz}$, consider the Hausdorff distance
\[
\dist(A,B):=\sup_{\mathbf x\in A}\inf_{\mathbf y\in B}
                        d_\infty(\mathbf x,\mathbf y),
 \qquad
 d_{H}(A,B):=\max\{\dist(A,B),\dist(B,A)\}.
\]
Under the assumptions of Theorem~\ref{thm:main},
$
 d_{H}(\mathcal T_f,\mathcal T_g)\leq Md.
$
Indeed, writing $H=\Id+h$ and $K=\Id+k$, the semiconjugacy identities imply that
$(H(x_n))_{n\in\Zz}\in\mathcal T_g$ for every
$\mathbf x\in\mathcal T_f$, and
$(K(y_n))_{n\in\Zz}\in\mathcal T_f$ for every
$\mathbf y\in\mathcal T_g$.  Each image trajectory is at
$d_\infty$-distance at most $Md$ from the original one, proving both
directed estimates.  Under the assumptions of
Theorem~\ref{thm:topological-stability} alone, the semiconjugacy $K$
still gives the one-sided estimate
$\dist(\mathcal T_g,\mathcal T_f)\leq Md$ for every homeomorphism $g$
uniformly close to $f$.

Thus $f$ has a quantitative form of \emph{strong tolerance stability},
relative to the bounded Lipschitz perturbations in
Theorem~\ref{thm:main}; compare \cite{MazurTolerance}.
The comparison is uniform over all time
indices; ordinary tolerance stability compares orbit closures as
subsets of phase space.  In shadowing terminology, the estimate from
$\mathcal T_f$ to $\mathcal T_g$ is inverse shadowing for methods
generated by these perturbations
\cite{CorlessPilyuginInverse,PilyuginInverseContinuous,PilyuginVolfsonTodorov}.
For the related theory of
bishadowing, including applications to delay equations, see
\cite{KloedenOmbachBishadowing,AlNayefBishadowing}.
For dissipative PDEs, related continuity questions are studied through
trajectory attractors in spaces of solutions, typically with
local-in-time topologies and without requiring uniqueness
\cite{VishikChepyzhovTrajectories, ChepyzhovVishikAttractors}.
\end{remark}

\subsection{Infinite product of Morse--Smale systems}
\label{sec:product-results}

Our structural stability theorem concerns the following product model. We revisit the coordinatewise product of
\cite[Example~20]{TikhomirovGH} and establish its structural
stability.  We state the scalar assumptions in the form used below.

Put $c=1/2$.  Let $a:\mathbb R\to\mathbb R$ be an increasing
$C^2$ Morse-Smale diffeomorphism whose only fixed points are
$-1,0,1$.  We assume that
\[
 a'(-1)=a'(1)=\lambda_1,
 \qquad
 a'(0)=\lambda_1^{-1}
\]
for some $0<\lambda_1<1$.  Thus the points $\pm1$ are attracting,
$0$ is repelling, and
\begin{equation}
\label{eq:product-a-order}
 a(t)>t \quad\text{for }t\in(-\infty,-1)\cup(0,1),
 \qquad
 a(t)<t \quad\text{for }t\in(-1,0)\cup(1,+\infty).
\end{equation}
We assume that there is $M>0$ with
\begin{equation}
\label{eq:product-a-bounds}
 \lambda_1\leq a'(t)\leq \lambda_1^{-1},
 \qquad |a''(t)|\leq M,
 \qquad t\in\mathbb R.
\end{equation}
We additionally require constants $C>0$ and $\lambda\in(0,1)$ such that
\begin{equation}
\label{eq:product-scalar-estimates}
 |(a^n)'(t)|\leq C\lambda^n
 \quad\text{for }|t|>c,
 \qquad
 |(a^{-n})'(t)|\leq C\lambda^n
 \quad\text{for }|t|\leq c,
 \qquad n\geq1.
\end{equation}
These are precisely the generalized $(C,\lambda)$ estimates for the
scalar splitting
\begin{equation}
\label{eq:product-scalar-splitting}
 E_t^s=
 \begin{cases}
  \mathbb R,& |t|>c,\\
  0,& |t|\leq c,
 \end{cases}
 \qquad
 E_t^u=
 \begin{cases}
  0,& |t|>c,\\
  \mathbb R,& |t|\leq c.
 \end{cases}
\end{equation}

Scalar maps satisfying all the preceding assumptions, including the
uniform iterated estimates \eqref{eq:product-scalar-estimates}, can be
chosen as in \cite[Section~3.2]{TikhomirovGH}, giving the same product
example.

Fix $p\in[1,+\infty]$ and put
 $\Bb=\ell^p(\Zz)$.
The norm of a vector $x\in\Bb$ is denoted by $|x|=|x|_p$.  We also use
$|x|_\infty=\sup_j|x_j|$ for the coordinate supremum; for $p<+\infty$,
$|x|_\infty\leq |x|_p$, and for $p=+\infty$ these two norms coincide.
The symbol $\mathcal L(\Bb)$ denotes the Banach space of bounded linear
operators $\Bb\to\Bb$, with operator norm $\|\cdot\|$.
Define
\begin{equation}
\label{eq:product-f}
 f:\Bb\longrightarrow\Bb,
 \qquad
 f(x)_j=a(x_j),
 \qquad j\in\Zz.
\end{equation}

\begin{example}
\label{ex:product}
The coordinatewise formula \eqref{eq:product-f} defines a diffeomorphism
$f\in\DUC(\Bb)$ satisfying generalized hyperbolicity.
\end{example}
\begin{proof}
Apply \cite[Proposition~4\textup{(P4)}]{TikhomirovGH} to the identical
scalar factors $a$, using $a(0)=0$, \eqref{eq:product-a-bounds}, and the
scalar splitting \eqref{eq:product-scalar-splitting} with the estimates
\eqref{eq:product-scalar-estimates}; see also
\cite[Example~20]{TikhomirovGH}.  This gives $f\in\DUC(\Bb)$ and
generalized hyperbolicity for every $p\in[1,+\infty]$, with the same
constants $C,\lambda$ as for $a$.
\end{proof}

\begin{thm}[Structural stability of the infinite product]
\label{thm:product-structural-stability}
There exist constants $d_0>0$ and $M_0>0$, depending only on $a$, such
that the following holds.
Let $\Bb=\ell^p(\Zz)$, let $f$ be defined by
\eqref{eq:product-f}, and let $\Delta:\Bb\to\Bb$ be a bounded
Lipschitz map satisfying
\begin{equation}
\label{eq:product-fg-close}
 |\Delta|_\infty\leq d,
 \qquad \Lip(\Delta)\leq d,
 \qquad 0<d<d_0.
\end{equation}
Put $g:=f+\Delta$.  Then $g$ is a bi-Lipschitz homeomorphism.
The perturbation $g-f$ may couple all coordinates.  There exist
$h,k\in C_b^0(\Bb,\Bb)$ with
$
 |h|_\infty,|k|_\infty\leq M_0d
$
such that, setting $H=\Id_{\Bb}+h$ and $K=\Id_{\Bb}+k$,
$$
 g\circ H=H\circ f, \quad K \circ g = f \circ K, \quad 
 K\circ H=H\circ K=\Id_{\Bb}.
$$
In particular, $H$ and $K$ are mutually inverse homeomorphisms.
\end{thm}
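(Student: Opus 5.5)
We build $H$ and $K$ from Green operators adapted to the product structure rather than by continuous selection, so that the two constructions are compatible under composition. For the forward equation $g\circ(\Id+h)=(\Id+h)\circ f$ we write $h(f(x))-Df(x)h(x)=\Delta(x+h(x))+s_f(x,h(x))$. Since $Df(x)$ is diagonal with entries $a'(x_j)$, we solve coordinatewise with an explicit linear Green operator. We introduce a continuous cutoff $\chi:\Rr\to[0,1]$ equal to $1$ on $\{|t|\leq c-\eta\}$ and $0$ on $\{|t|\geq c+\eta\}$; this smooths the discontinuous scalar splitting \eqref{eq:product-scalar-splitting}. The operator $(\mathcal G w)_j(x)$ sums the stable part $\sum_{n\geq1}(a^{n})'$-transported forcing from the past, weighted by $1-\chi$, and the unstable part $-\sum_{n\geq0}$ of backward-transported forcing from the future, weighted by $\chi$, along the scalar orbit of $x_j$. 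A scalar orbit spends only a uniformly bounded number of iterates in the transition band $c-\eta<|t|<c+\eta$, since $0$ is repelling and $\pm1$ attracting. This follows from \eqref{eq:product-scalar-estimates} and the monotonicity \eqref{eq:product-a-order}. Hence the error terms from the cutoff are finite sums, and we remove them by a Neumann-series correction as in Theorem~\ref{thm:cocycle-D-surj}. Because the operator acts diagonally and with uniform scalar constants, it maps $C_b^0(\Bb,\Bb)$ to itself in every $\ell^p$ norm. A contraction argument in $\{|h|_\infty\leq M_0d\}$ with $\Lip(\Delta)\leq d$ and \eqref{eq:s-rho} then gives $h$.

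For the reverse equation $f\circ K=K\circ g$, Theorem~\ref{thm:main} already gives some $k$, but not a canonical one. We instead define $k$ as the fixed point of a nonlinear operator $\mathcal N$. The equation is written as $k(g(y))-Df(y+k(y))k(y)=$ remainder, with the cocycle taken over the base $g$ and the coefficient and cutoff $\chi$ evaluated at the corrected point $y+k(y)$. The transported coefficients remain generalized hyperbolic by the change-of-base result Theorem~\ref{thm:g-Df-GH}, adapted to the diagonal scalar setting. Pseudotrajectory estimates for the scalar map are needed here: the coordinates of $y_n+k(y_n)$ form exact $a$-orbits, so the number of band crossings stays bounded. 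We treat these estimates as the content deferred to the appendix.

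The core step is proving $K\circ H=\Id$ and $H\circ K=\Id$ without expansivity. For $K\circ H$, set $u:=h+k\circ H$, so $K\circ H=\Id+u$ and $f\circ(\Id+u)=(\Id+u)\circ f$. Substituting $y=H(x)$ into the fixed-point identity for $k$ and using the one for $h$ (pullback), the normalization shows that $u$ is a fixed point of the same normalized operator $\mathcal N$ built for the pair $(f,f)$. The remainder estimates show that this operator is a contraction on small fields, with $0$ as its fixed point, so $u=0$. This is exactly why the cutoff is evaluated at the corrected point: $\chi$ is then evaluated at $x+u(x)$ in both descriptions, and the pullback matches. For $H\circ K$, $v:=k+h\circ K$ satisfies $g\circ(\Id+v)=(\Id+v)\circ g$. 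We show it is the normalized fixed point for $(g,g)$, which is again unique, so $v=0$.

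I expect the main obstacle to be this compatibility: arranging that the pullback of $\mathcal N_{g\to f}$ under $H$, composed with the linear operator for $h$, reproduces exactly the normalized operator for $(f,f)$ up to remainders. These remainders are quadratic in the displacement and Lipschitz-small of order $d$, so contraction forces $u=0$. Bi-Lipschitz invertibility of $g$ follows from $\Lip(\Delta)\leq d<R^{-1}$.
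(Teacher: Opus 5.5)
Your architecture is the paper's: a linear Green operator for the forward equation, a normalized nonlinear reverse operator with $Df$ and the cutoff evaluated at the corrected point, and a pullback argument for the composition defects. The gap is in the decisive step. You assert that $u=h+k\circ H$ is an exact fixed point of the normalized operator for $(f,f)$, and $v=k+h\circ K$ of the one for $(g,g)$, and then conclude by uniqueness.

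In the paper's notation, $h=\mathcal S_f(0)G(h)$ and $k=\mathcal S_g(k)R(k)$. Pulling back by $H$ gives exactly $k\circ H=\mathcal S_f(u)\bigl(R(k)\circ H\bigr)$. But $h$ carries the other normalization, with cutoff at $x$ rather than at $x+u(x)$, so
\[
 u=\mathcal S_f(0)G(h)+\mathcal S_f(u)\bigl(R(k)\circ H\bigr).
\]
Put $A_u=Df\circ(\Id+u)$ and $N_0(u)=f\circ(\Id+u)-f-A_uu$. Using $g\circ H=H\circ f$ one checks that $R(k)\circ H=N_0(u)-\mathcal D_{f,A_u}h$. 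Hence $u=h+\mathcal S_f(u)N_0(u)-\mathcal S_f(u)\mathcal D_{f,A_u}h$, and your identity $u=\mathcal S_f(u)N_0(u)$ is equivalent to $\mathcal S_f(u)\mathcal D_{f,A_u}h=h$.

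Nothing gives this a priori. $\mathcal S_f(u)$ is only a right inverse, and $\mathcal D_{f,A_u}$ is not injective on $C_b^0(\Bb,\Bb)$. Already $\mathcal D_f$ annihilates nonzero fields $x\mapsto\phi(x_0)e_0$ with $\phi\circ a=a'\phi$ supported in $[0,1]$; this is exactly the non-uniqueness caused by the lack of expansivity. Likewise, the pullback of $O_f$ by $K$ is $\mathcal S_g(k)$, whose corrected point is $K(y)$, not $y+v(y)=H(K(y))$. So nothing places $v$ in the $(g,g)$ normalization class, and uniqueness of normalized fixed points says nothing about $u$ or $v$.

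What closes the argument are inequalities that are linear in the defects with small constants, not exact identities.

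First, you need the Lipschitz dependence $\|\mathcal S_f(u)-\mathcal S_f(0)\|\leq L_1|u|_\infty$. It comes from Lipschitz dependence of the scalar Green coefficients on the corrected pseudotrajectory (Lemma~\ref{lem:product-scalar-Green-Lipschitz}). You never mention it, although it is also what makes $u\mapsto\mathcal S_g(u)w$ a contraction, i.e.\ what makes your normalized reverse operator well defined.

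Second, you need cancellation estimates for the combined forcings:
\[
 |G(h)+R(k)\circ H|_\infty\leq M\rho\,|u|_\infty,
 \qquad
 |R(k)+G(h)\circ K|_\infty\leq(M\rho+d)\,|v|_\infty .
\]
Remainders that are merely small, or quadratic in the displacements, do not suffice; they must be bounded by a small multiple of the defect itself.

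Writing $u=\mathcal S_f(0)\bigl(G(h)+R(k)\circ H\bigr)+\bigl(\mathcal S_f(u)-\mathcal S_f(0)\bigr)\bigl(R(k)\circ H\bigr)$ gives $|u|_\infty\leq\frac{LM\rho}{1-L_1\varepsilon}|u|_\infty$. The exact covariance $C_KO_f=\mathcal S_g(k)C_K$ gives $v=\mathcal S_g(k)\bigl(R(k)+G(h)\circ K\bigr)$, hence $|v|_\infty\leq L(M\rho+d)|v|_\infty$ (constants as in Section~\ref{sec:infinite-morse-smale}).

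That covariance requires $O_f$ to be literally $\mathcal S_f(0)$, with the cutoff applied to the forcing at its source time. Then $\mathcal D_fO_f=\Id$ holds exactly and no correction step is needed. A correction ``as in Theorem~\ref{thm:cocycle-D-surj}'' (selections and averaging) would destroy the covariance. In any case, cutoff errors that are merely ``finite sums'' would not make a Neumann series converge.
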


\begin{corr}
\label{cor:product-structural-stability}
For every $p\in[1,+\infty]$, the product map $f$ in
\eqref{eq:product-f} is structurally stable.
\end{corr}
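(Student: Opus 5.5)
The corollary follows from Theorem~\ref{thm:product-structural-stability} exactly as Corollary~\ref{cor:two-semiconjugacies} follows from Theorem~\ref{thm:main}; only the quantifiers of Definition~\ref{def:stability} need checking. By Example~\ref{ex:product}, $f\in\DUC(\Bb)$ for every $p\in[1,+\infty]$, so the definition applies.

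Fix $\varepsilon>0$ and take $d_0,M_0$ from Theorem~\ref{thm:product-structural-stability}. Choose $0<d<\min\{d_0,\varepsilon/M_0\}$ and use this $d$ as the threshold required by Definition~\ref{def:stability}. Let $g\in\DUC(\Bb)$ satisfy $\norm{f-g}_{C^1}<d$, and put $\Delta:=g-f$.

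We then verify the hypotheses \eqref{eq:product-fg-close}:
\begin{itemize}
\item Since $|\Delta(x)|_\infty\leq|\Delta(x)|_p$, we get $|\Delta|_\infty\leq\sup_x|g(x)-f(x)|<d$.
\item $\Delta$ is $C^1$ with $\sup_x\norm{D\Delta(x)}<d$. The mean value inequality along segments in the Banach space $\Bb$ then gives $\Lip(\Delta)\leq d$.
\item $\Delta$ is bounded and Lipschitz, so it is an admissible perturbation.
\end{itemize}

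Theorem~\ref{thm:product-structural-stability} then produces $h,k\in C_b^0(\Bb,\Bb)$ with $|h|_\infty,|k|_\infty\leq M_0d<\varepsilon$. These satisfy $g\circ H=H\circ f$, $f\circ K=K\circ g$, and $K\circ H=H\circ K=\Id_{\Bb}$, which is exactly structural stability.

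The only point needing care is the norm in which smallness of $h,k$ is measured. In Definition~\ref{def:stability}, $\norm h_\infty$ is the supremum of $|h(x)|$ in the $\ell^p$ norm. In the theorem, $|h|_\infty$ should be read the same way, as a sup over $x$ of the $\Bb$-norm, consistent with $\abs{h(x)}\leq Md$ in Theorem~\ref{thm:main}. If the theorem is instead read as bounding only coordinatewise suprema, one would need to go back to its proof and confirm that the displacement bound holds in the $\Bb$-norm. For $p=+\infty$ the two readings coincide, and I expect the proof gives the $\Bb$-norm bound in general. This check is the main obstacle; the rest is direct.
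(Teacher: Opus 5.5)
Your proposal is correct and is essentially the paper's proof: fix $\varepsilon$, take $0<d<\min\{d_0,\varepsilon/M_0\}$, check that $\Delta=g-f$ satisfies the hypotheses of Theorem~\ref{thm:product-structural-stability}, and read off mutually inverse conjugacies with $|h|_\infty,|k|_\infty\le M_0d<\varepsilon$. The norm issue you flag is not an obstacle: in Subsection~\ref{sec:product-common-Green-operators} the paper defines $|w|_\infty:=\sup_{x\in\Bb}|w(x)|_p$ on $C_b^0(\Bb,\Bb)$, and the proof of the theorem places $h,k$ in the ball $\mathcal U_\rho$ for exactly this norm.
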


\begin{proof}
Given $\varepsilon>0$, take $M_0,d_0$ from
Theorem~\ref{thm:product-structural-stability} and choose
$0<d<\min\{d_0,\varepsilon/M_0\}$.  If $g\in\DUC(\Bb)$ satisfies
$\norm{f-g}_{C^1}<d$, then $\Delta:=g-f$ has
$\norm\Delta_\infty<d$ and $\Lip(\Delta)<d$.
Theorem~\ref{thm:product-structural-stability} gives mutually inverse
homeomorphisms $H=\Id+h$ and $K=\Id+k$ satisfying the conjugacy
identities and $\norm h_\infty,\norm k_\infty\leq M_0d<\varepsilon$.
\end{proof}

\begin{remark}
\label{rem:comparison-stability-results}
Theorem~\ref{thm:main} applies to arbitrary generalized-hyperbolic
$f\in\DUC(\Bb)$. 
It gives
continuous semiconjugacies in both directions; uniform continuity
and mutual invertibility are not asserted.
Theorem~\ref{thm:product-structural-stability} concerns the specified
Morse-Smale product.
Here the semiconjugacies are mutually inverse
and both maps can be chosen uniformly continuous
(Remark~\ref{rem:product-uniform-conjugacy}).  The passage to conjugacy
uses the compatible normalization available for the product.
\end{remark}

\begin{corr}
\label{rem:product-uncountably-many}
For every $p\in[1,+\infty]$, all fixed points of the map $f$ in
\eqref{eq:product-f} are hyperbolic.  If $1\leq p<+\infty$, its fixed-point set is
countable.
If $p=+\infty$, it has uncountably many fixed points.
For every $p\in[1,+\infty]$, no generalized-hyperbolic splitting of $f$
depends continuously on the base point, and $f$ has no compact global
attractor in the $\ell^p$ norm.
\end{corr}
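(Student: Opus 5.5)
The plan is to prove the four claims of Corollary~\ref{rem:product-uncountably-many} in turn, using only the coordinatewise structure \eqref{eq:product-f} and the scalar assumptions on $a$.

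\textbf{Fixed points and hyperbolicity.} A point $x\in\Bb$ is fixed exactly when $a(x_j)=x_j$ for every $j$, that is, when $x_j\in\{-1,0,1\}$ for all $j$. For $p<+\infty$ membership in $\ell^p$ forces all but finitely many coordinates to vanish. So the fixed points are the finitely supported $\{-1,0,1\}$-valued sequences, a countable union of finite sets. For $p=+\infty$ every element of $\{-1,0,1\}^{\Zz}$ lies in $\ell^\infty$. In particular the sequences with values in $\{-1,1\}$ form an uncountable family, and any two distinct fixed points differ by at least $1$ in some coordinate.

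For hyperbolicity, note that $Df(x)$ is the diagonal operator $\operatorname{diag}(a'(x_j))$. At a fixed point each entry is $\lambda_1$ or $\lambda_1^{-1}$. Splitting $\Bb$ into the coordinates with $x_j=\pm1$ and those with $x_j=0$ gives complementary closed invariant subspaces, with coordinate projections of norm $1$. On the first subspace $Df(x)$ contracts by $\lambda_1$, and on the second $Df(x)^{-1}$ contracts by $\lambda_1$. This is exactly hyperbolicity of the fixed point.

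\textbf{No continuous splitting.} Suppose some generalized-hyperbolic splitting $P_x,Q_x$ with constants $C',\lambda'$ depends continuously on $x$. At a fixed point $x$, (GH2) gives $Df(x)E^s_x\subseteq E^s_x$. The exponential estimate in (GH3) then forces $E^s_x$ to meet the expanding coordinates only trivially. I will use the standard argument that the stable space at a hyperbolic fixed point is exactly the set of vectors $v$ with $Df(x)^nv\to0$. This gives $E^s_x=$ the span of the contracting coordinates; symmetrically $E^u_x$ is the span of the expanding ones. Hence $P_x$ is determined at every fixed point.

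Now take the fixed points $x=0$ and $y=e_0$, where $e_0$ is the unit vector at coordinate $0$. Consider the path $t\mapsto te_0$ for $t\in[0,1]$. Continuity of $P$ along it gives continuity of $t\mapsto P_{te_0}e_0$. I will compare the endpoints: $P_0e_0=0$ while $P_{e_0}e_0=e_0$.

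To get a contradiction, I propagate along orbits. For $t\in(0,1)$ we have $a^n(t)\to1$ and $a^{-n}(t)\to0$. Invariance combined with the exponential estimates shows that the dimension-type quantity $\operatorname{rank}(P_{te_0}\pi_0)$ is locally constant in $t$ near points where $P$ is continuous. Here $\pi_0$ is the projection onto coordinate $0$. I expect this step to be the main obstacle, because (GH2) only gives inclusions.

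A cleaner route is to argue by connectedness. The set of projections of a fixed rank onto a complemented subspace varies continuously, and a continuous family of projections has locally constant "index" relative to a fixed splitting, in the sense that nearby projections are similar via $\Id+(P-P')(2P-\Id)$. So $E^s_{te_0}$ is isomorphic to $E^s_0$ for all $t$. But $E^s_0$ has codimension $0$ in the coordinate-$0$ direction relative to $E^s_{e_0}$. Concretely, $E^u_0\supseteq\operatorname{span}\{e_j\}$ and $E^u_{e_0}=E^u_0\cap\{x_0=0\}$ has codimension $1$ in it. Similar projections cannot have this codimension jump, which gives the contradiction. I would first try this similarity argument, and fall back on the orbit argument only if the isomorphism type fails to detect codimension in infinite dimensions. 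The fallback is to compare $Q_x$ restricted to the finite-dimensional piece $\pi_0$.

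\textbf{No compact global attractor.} A compact global attractor must contain every fixed point, because fixed points are bounded invariant sets. For $p=+\infty$, the fixed points contain an uncountable $1$-separated family, so no compact set can contain them. For $p<+\infty$, the sequence $e_n$ consists of fixed points with $|e_n-e_m|_p=2^{1/p}$ for $n\ne m$. It has no convergent subsequence, so again no compact set contains all fixed points.
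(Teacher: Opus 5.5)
Your proposal is correct and follows the paper's proof: the same fixed-point set, the diagonal derivative at fixed points, the identification of $E^s_q,E^u_q$ with the stable and unstable subspaces of $Df(q)$, the local constancy of the rank of a norm-continuous family of projections on a connected set, and the separated fixed points $e_j$ ruling out a compact attractor. Two small points. First, the equality $E^s_q=\{v:\ Df(q)^nv\to0\}$ needs \textup{(GH1)} in addition to the inclusion given by \textup{(GH3)}. Second, your similarity argument already finishes the continuity step, because it yields $\{0\}=E^s_0\cong E^s_{e_0}=\mathbb R e_0$, i.e.\ $\operatorname{rank}P_0=0\neq1=\operatorname{rank}P_{e_0}$, so neither the orbit fallback nor the codimension comparison of unstable spaces is needed.
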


\begin{proof}
Since the only fixed
points of $a$ are $-1,0,1$, the fixed-point set of $f$ is
\[
 \{q\in\ell^p(\Zz):q_j\in\{-1,0,1\}\text{ for every }j\in\Zz\}.
\]
For $p<+\infty$, such sequences have finite support, so this set is
countably infinite.  For $p=+\infty$, every sequence in
$\{-1,0,1\}^{\Zz}$ is admissible, giving an uncountable set.
At a fixed point $q$, the derivative is the diagonal operator with
multiplier $\lambda_1$ on the coordinates where $q_j\neq0$, and
multiplier $\lambda_1^{-1}$ where $q_j=0$.  Thus every fixed point is
hyperbolic, and its stable subspace consists of the vectors supported on
$\{j:q_j\neq0\}$.
For any generalized-hyperbolic splitting, \textup{(GH3)} gives
$E_q^s$ and $E_q^u$ inside the stable and unstable subspaces of $Df(q)$,
respectively, and \textup{(GH1)} then forces equality.  Hence
$\dim E_0^s=0$ and $\dim E_{e_0}^s=1$.  The rank of a norm-continuous
family of projections is locally constant and $\Bb$ is connected, so no
generalized-hyperbolic splitting of $f$ depends continuously on the base
point.
For a more detailed discussion of the fixed points and their stable
subspaces, see \cite[Section~3.2]{TikhomirovGH}.

Every unit coordinate vector $e_j$ is fixed, so any compact global
attractor would contain all of them.  Since $|e_i-e_j|_p\geq1$ for
$i\neq j$, this contradicts compactness.
\end{proof}

Thus the global structural-stability result above lies outside the
compact-attractor framework considered in \cite{ArrietaCarvalhoTakaessu}.
For oriented shadowing by elementary pseudotrajectories near attractors
in Banach spaces, see \cite{LiPilyugin}.
The proof of Theorem~\ref{thm:product-structural-stability} is given
in Section~\ref{sec:infinite-morse-smale}.

\subsection{Continuous solvability and shadowing for cocycles}
\label{sec:induced-shadowing-results}

Let $\alpha:\Bb\to\Bb$ be a homeomorphism and let
$A:\Bb\to\operatorname{GL}(\Bb)$ be the generator of a linear cocycle
over $\alpha$.  Assume that there exists
$R_A>0$ such that
\[
 \norm{A(x)},\norm{A(x)^{-1}}\leq R_A,
 \qquad x\in\Bb,
\]
and that the maps $x\mapsto A(x)$ and $x\mapsto A(x)^{-1}$ are
uniformly continuous in the operator norm.  Put $A_\alpha^0(x):=\Id$
and, for $n>0$, define
\begin{equation*}
 A_\alpha^n(x)
 :=A(\alpha^{n-1}(x))\cdots A(\alpha(x))A(x),
 \qquad
 A_\alpha^{-n}(x)
 :=A(\alpha^{-n}(x))^{-1}\cdots A(\alpha^{-1}(x))^{-1}.
\end{equation*}

\begin{defin}[Cocycle generalized hyperbolicity]
\label{def:CL-cocycle}
We say that the pair $(\alpha,A)$ satisfies \emph{cocycle generalized
hyperbolicity} if there exist constants $C>0$, $\lambda\in(0,1)$ and a
family of projections
$P_x,Q_x:\Bb\to\Bb$, $x\in\Bb$,
such that, writing
$
 E_x^s:=P_x(\Bb)$,
$
 E_x^u:=Q_x(\Bb)$,
for every $x\in\Bb$ the following conditions hold.
\begin{enumerate}[label=\textup{(C-GH\arabic*)},leftmargin=15mm]
\item \textbf{Splitting.}
$
 P_x+Q_x=\Id$,
$ \norm{P_x},\norm{Q_x}\leq C.
$
Hence, $E_x^s\oplus E_x^u=\Bb$.

\item \textbf{Inclusion.} $
 A(x)E_x^s\subseteq E_{\alpha(x)}^s$,
$ A(x)^{-1}E_{\alpha(x)}^u\subseteq E_x^u.
$ hold.

\item \textbf{Exponential estimates.} For every $n>0$,
\[
 \abs{A_\alpha^n(x)v_x^s}
 \leq C\lambda^n\abs{v_x^s},
 \qquad v_x^s\in E_x^s,
\]
\[
 \abs{A_\alpha^{-n}(x)v_x^u}
 \leq C\lambda^n\abs{v_x^u},
 \qquad v_x^u\in E_x^u.
\]
\end{enumerate}
As in Definition \ref{def:GH}, no finite-dimensionality assumption is imposed on $E_x^u$.
No continuity, uniform continuity, or measurability of the maps
$x\mapsto P_x,Q_x$ is assumed.
\end{defin}

For $f\in\DUC(\Bb)$, the cocycle $A(x)=Df(x)$ satisfies the
assumptions above.  Indeed, \eqref{eq:C1-2} gives the uniform bounds,
\eqref{eq:C1-r1} gives the uniform continuity of $Df$, and
\[
 \norm{Df(x)^{-1}-Df(y)^{-1}}
 \leq R^2\norm{Df(x)-Df(y)}
 \leq R^2r(\abs{x-y}).
\]
Moreover, if $f$ satisfies generalized hyperbolicity, then $(f,Df)$
satisfies cocycle generalized hyperbolicity with the same projections and
the same constants.

Put $X=C_b^0(\Bb,\Bb)$, with the supremum norm.  We first state
bounded continuous solvability of the cohomological equation.

\begin{thm}
\label{thm:cocycle-D-surj}
Assume that $(\alpha,A)$ satisfies cocycle generalized hyperbolicity, with constants
$C,\lambda$.  There exists $K_c=K_c(C,\lambda)>0$ such that, for
every $w\in X$, there exists $v\in X$ satisfying
\begin{equation}
\label{eq:cocycle-D-solve}
 \norm v_\infty\leq K_c\norm w_\infty, \qquad v(\alpha(x))-A(x)v(x)=w(x),\qquad x\in\Bb.
\end{equation}
\end{thm}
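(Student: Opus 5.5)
The plan starts from the orbitwise Green-operator formula and then uses continuous selection to turn it into a solution that depends continuously on the base point.

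\textbf{Orbitwise solutions.} Fix $x$ and write $x_n=\alpha^n(x)$. The standard candidate solution is
\[
 v(x)=\sum_{k\geq1}A_\alpha^{-k}(x)\,Q_{x_{-k}}\ldots
\]
More precisely, we use the Green formula
\[
 v(x_0)=\sum_{n\geq1}A_\alpha^{n-1}(x_{-n+1})P_{x_{-n+1}}\ldots
\]
written as
\[
 v(x)=\sum_{m\leq -1}A^{-m-1}_\alpha(\alpha^{m+1}x)\,P\,w(\alpha^m x)-\sum_{m\geq0}A^{-(m+1)}_\alpha(\alpha^{m+1}x)\,Q\,w(\alpha^m x).
\]
Stable parts of past forcing are pushed forward, and unstable parts of future forcing are pulled back. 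The inclusions (C-GH2) and the estimates (C-GH3) give a bound of $2C^2/(1-\lambda)$ times $\norm w_\infty$. This formula solves the equation exactly along each orbit. However, $x\mapsto P_x$ is arbitrary, so $v$ need not be continuous.

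\textbf{Finite-time problems and continuous selection.} To fix continuity, replace the exact pointwise formula by a family of admissible sets. For $N$ large and each $x$, let $\Phi_N(x)$ be the set of initial vectors $v_0\in\Bb$ such that the finite recurrence $v_{n+1}=A(x_n)v_n+w(x_n)$, run for $|n|\leq N$, stays bounded by $L\norm w_\infty$ with $L$ larger than the Green bound. This set is convex and closed. By the orbitwise Green solution it has nonempty interior relative to a slack: the Green vector is strictly feasible once $L$ exceeds the bound. The data $A$, $A^{-1}$, $\alpha$, $w$ are continuous, and the constraint functions are continuous in $(x,v_0)$. Hence the strict-feasibility sets are open in $x$ and lower semicontinuous, and a partition of unity (or Michael's theorem) gives a continuous selection $v^N(x)\in\Phi_N(x)$. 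The selection $v^N$ satisfies the cohomological equation only approximately, because $v^N(\alpha x)$ and $A(x)v^N(x)+w(x)$ are two different admissible points.

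\textbf{Correction and averaging.} The defect $e^N(x)=v^N(\alpha x)-A(x)v^N(x)-w(x)$ is continuous. We would show that it is small, of order $\lambda^{N}$ times $\norm w_\infty$ up to constants. The argument is that two bounded finite-time solutions over the window $[-N,N]$ differ at a central time by $C\lambda^{N}$ times their size: decompose the difference into stable and unstable parts at the central time, propagate the stable part forward from time $-N$ and the unstable part backward from time $N$, and apply (C-GH3). Averaging over shifted windows, $\frac1N\sum_j$, may be needed to make the defect estimate uniform in the window position. Then $\norm{e^N}_\infty\leq\theta\norm w_\infty$ with $\theta<1/2$ for fixed large $N$. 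Iterate by solving again with forcing $-e^N$ and summing the Neumann-type series $v=\sum_k v^{(k)}$. This converges in $X$ to an exact continuous solution with $K_c=L/(1-\theta)$, which depends only on $C$ and $\lambda$.

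\textbf{Main obstacle.} The hard step is the defect estimate: two admissible finite-window solutions must agree near the center up to an exponentially small error. The splitting is discontinuous and only invariant by inclusion, so the stable/unstable decomposition of the difference must be taken at a single base point and propagated using only (C-GH2). I would first try splitting at time $0$ with $P_{x_0}$ and using the forward inclusion for the stable part and the backward inclusion for the unstable part.
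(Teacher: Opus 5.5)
Your selection step is sound: the constraint maps are continuous and affine in $v_0$, and the Green vector is strictly feasible. So lower semicontinuity and a continuous selection follow as you say.

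The gap is the defect estimate you single out as the main obstacle. It is not merely hard; it is false under generalized hyperbolicity. The two admissible points $v^N(\alpha(x))$ and $A(x)v^N(x)+w(x)$ differ by the central value of a bounded solution of the homogeneous recurrence $z_{n+1}=A(x_n)z_n$ on the overlap of the windows. When the inclusions in (C-GH2) are strict, such solutions need not be small at the center. Your estimate is an expansivity (uniqueness) statement, which this setting deliberately does not provide.

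Here is a counterexample. Let $\alpha=\Id$ and $A\equiv T$, where $(Tx)_{j+1}=\omega_jx_j$ on $\ell^\infty(\Zz)$, with $\omega_j=2$ for $j<0$ and $\omega_j=1/2$ for $j\geq0$. With the constant splitting into sequences supported on $\{j\geq0\}$ and on $\{j<0\}$, this cocycle satisfies (C-GH1)--(C-GH3) with $C=1$, $\lambda=1/2$. Yet $T^ne_0=2^{-\abs{n}}e_n$ for every $n\in\Zz$.
\begin{itemize}
\item Take the constant forcing $w\equiv e_0-\frac12e_1$. Then $\norm{w}_\infty=1$ and $e_0-Te_0=w$.
\item The constant field $v^N\equiv 2e_0$ generates the orbit $v_n=e_0+2^{-\abs{n}}e_n$, of norm at most $2$.
\item So it is admissible for every $N$ and every $L$ above the Green bound, and it is a legitimate continuous selection.
\item Its defect is $e_0-\frac12e_1$, of norm $\norm{w}_\infty$, for every $N$.
\end{itemize}
Since Michael's theorem gives no control over which admissible point is selected, no choice of $N$ yields $\theta<1/2$.

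Your splitting argument breaks exactly here. For $z_n=T^ne_0$ one has $P_{x_0}z_0=e_0$, whereas $z_{-N}=2^{-N}e_{-N}\in E^u$. So the stable component at time $0$ is not the forward image of a stable component at time $-N$. That propagation would need $P_{x_{n+1}}A(x_n)=A(x_n)P_{x_n}$, i.e.\ invariance with equality.

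The missing idea is to stop comparing selections made at different base points. The paper proceeds as follows.
\begin{enumerate}
\item At each $x$ it selects an initial vector $a_N(x)$ for the one-sided recurrence on $\{0,\dots,N\}$.
\item It transports the whole finite solution along the orbit, $v_k(\alpha^k(x)):=V_k(x,a_N(x))$ (Lemma~\ref{lem:finite-cont}). These are fields in $X$ satisfying $v_{k+1}=T_{\alpha,A}v_k+\varphi$ exactly, with $\max_k\norm{v_k}_\infty\leq2L_c\norm{\varphi}_\infty$.
\item The time average $u=\frac1N\sum_{k=0}^{N-1}v_k$ telescopes to $u-T_{\alpha,A}u=\varphi+\frac1N(v_0-v_N)$. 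So the defect is at most $4L_c\norm{\varphi}_\infty/N$, using only the uniform bound, with no decay and no uniqueness.
\item Fixing $N>8L_c$ and iterating this error-halving step gives an exact solution of norm at most $4L_c\norm{\varphi}_\infty$ (Lemma~\ref{lem:Birkhoff}).
\item Taking $\varphi=w\circ\alpha^{-1}$ in \eqref{eq:cocycle-D-T} gives $\mathcal D_{\alpha,A}v=w$ with $K_c=4L_c$.
\end{enumerate}
Your parenthetical about averaging over shifted windows is the germ of this. But the averaging must be the mechanism that kills the defect, not a supplement to an exponential estimate that does not hold.
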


On $X$, define the induced linear operator by
\begin{equation}\label{eq:TalphaA}
 (T_{\alpha,A}v)(\alpha(x)):=A(x)v(x).
\end{equation}
It is a bounded linear isomorphism.  The following result concerns its
dynamics on the space of continuous vector fields.

\begin{thm}[Shadowing for the induced operator]
\label{thm:induced-shadowing}
Assume that $(\alpha,A)$ satisfies cocycle generalized hyperbolicity.
Then $T_{\alpha,A}$ has the Lipschitz shadowing property on $X$:
there exists $L_T>0$ such that, for every $d>0$ and
every sequence $(v_n)_{n\in\Zz}$ in $X$ satisfying
$
 \sup_{n\in\Zz}\norm{v_{n+1}-T_{\alpha,A}v_n}_\infty\leq d,
$
there is $v\in X$ such that
$
 \sup_{n\in\Zz}\norm{v_n-T_{\alpha,A}^n v}_\infty\leq L_Td.
$
\end{thm}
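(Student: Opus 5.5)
The plan is to deduce Theorem~\ref{thm:induced-shadowing} from Theorem~\ref{thm:cocycle-D-surj}. We reduce shadowing for the linear operator $T:=T_{\alpha,A}$ to bounded solvability of a difference equation for sequences in $X$. Put $w_n:=v_{n+1}-Tv_n$, so $\norm{w_n}_\infty\leq d$. We look for a sequence $u_n\in X$ with $\sup_n\norm{u_n}_\infty\leq L_Td$ and
\[
 u_{n+1}-Tu_n=w_n,\qquad n\in\Zz .
\]
Then $z_n:=v_n-u_n$ satisfies $z_{n+1}=Tz_n$ for all $n$, hence $z_n=T^nz_0$. Taking $v:=z_0$ gives $\norm{v_n-T^nv}_\infty=\norm{u_n}_\infty\leq L_Td$.

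To solve the sequence equation, I would pass to an enlarged cocycle whose cohomological equation encodes it. Let $\widetilde\Bb:=\Bb\times\Zz$, a metric space with $\Zz$ discrete. Define $\widetilde\alpha(x,n):=(\alpha(x),n+1)$ and $\widetilde A(x,n):=A(x)$. A field $\widetilde u$ on $\widetilde\Bb$ corresponds to the sequence $u_n(y):=\widetilde u(\alpha^{n}(x),n)$ with $y=\alpha^n(x)$, after a suitable reindexing. Concretely, set $\widetilde u(y,n):=u_n(\alpha^{n}(y))$, and similarly for $\widetilde w$. Then the relation $(Tu)(\alpha(x))=A(x)u(x)$ turns the sequence equation into the cohomological equation $\widetilde u(\widetilde\alpha(\cdot))-\widetilde A\,\widetilde u=\widetilde w$ over $\widetilde\alpha$. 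I would set the indices up precisely so that this identity holds exactly.

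The projections $\widetilde P_{(x,n)}:=P_x$ give cocycle generalized hyperbolicity with the same $C,\lambda$, and the uniform bounds and uniform continuity of $\widetilde A$ are inherited. Theorem~\ref{thm:cocycle-D-surj} is stated for base $\Bb$, but its proof (orbitwise estimates, continuous selection via partitions of unity, averaging, corrections) should only use that the base is a metric space with $\alpha$ a homeomorphism. So I would check that it applies verbatim to $\widetilde\Bb$, yielding $\norm{\widetilde u}_\infty\leq K_c d$ and hence $L_T=K_c$.

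The main obstacle is this transfer. If the proof of Theorem~\ref{thm:cocycle-D-surj} genuinely uses linear structure of $\Bb$ in the base, I would instead avoid enlarging the base, using the route of Bernardes--Peris cited in the introduction. First I would solve finite-time problems on $[-N,N]$: fix the unknown $u_{-N}$ and propagate, then choose initial data using continuous selections as in the cocycle proof, giving bounds uniform in $N$. The obstacle there is that solutions for different $N$ need not be compatible, and $X$ is not reflexive, so limits are not available. To get around this I would use a splitting of $X$ into sums of the forcing's forward-contracted and backward-contracted parts, built from the continuous solution operator $w\mapsto v$ of Theorem~\ref{thm:cocycle-D-surj}. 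The plan is to show that $T$ is generalized hyperbolic on $X$ in the operator sense, which is known to imply Lipschitz shadowing by \cite{BernardesEtal,CiriloGollobitPujals}.
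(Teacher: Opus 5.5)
Your main route is correct, but it is genuinely different from the paper's. The paper does not use Theorem~\ref{thm:cocycle-D-surj} here. It reads Lemma~\ref{lem:finite-cont} (finite horizon, arbitrary forcings $w_1,\dots,w_N$) as finite Lipschitz shadowing with constant $2L_c$. It then applies the Bernardes--Peris result, Lemma~\ref{lem:finite-infinite-shadowing}, to get $L_T=4L_c$.

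You instead identify $\ell^\infty(\Zz,X)$ with $C_b^0(\Bb\times\Zz,\Bb)$ and turn the whole bi-infinite problem $u_{n+1}-Tu_n=w_n$ into a single cohomological equation over the product base. The Birkhoff averaging inside Theorem~\ref{thm:cocycle-D-surj} then replaces the finite-to-infinite passage. So you need no external shadowing theorem, and you get the same constant $L_T=K_c=4L_c$, since the enlarged cocycle has the same $C,\lambda$.

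The price is rerunning Theorem~\ref{thm:cocycle-D-surj} over a base that is not a Banach space. That transfer is legitimate. The base enters only through continuity of the data in Lemma~\ref{lem:strict}, paracompactness in Theorem~\ref{thm:Michael} (or the partition of unity), and $\widetilde\alpha$ being a homeomorphism. All of these hold for the metric space $\Bb\times\Zz$.

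You do need to fix the indexing. With $\widetilde\alpha(x,n)=(\alpha(x),n+1)$ and $\widetilde A(x,n)=A(x)$, your formula $\widetilde u(y,n)=u_n(\alpha^n(y))$ does not give the identity. Take simply $\widetilde u(x,n):=u_n(x)$ and $\widetilde w(x,n):=w_n(\alpha(x))$. Then $\widetilde u(\widetilde\alpha(x,n))-\widetilde A(x,n)\widetilde u(x,n)=u_{n+1}(\alpha(x))-A(x)u_n(x)=w_n(\alpha(x))$. Here $\widetilde w$ is continuous because $\Zz$ is discrete, and $\norm{\widetilde w}_\infty\leq d$.

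Your fallback is unnecessary, and as written it would not go through. Theorem~\ref{thm:cocycle-D-surj} supplies no continuous or linear solution operator $w\mapsto v$; Lemma~\ref{lem:Birkhoff} explicitly asserts none. So there is nothing from which to build a splitting of $X$, and generalized hyperbolicity of $T_{\alpha,A}$ on $X$ is not established in the paper. The obstacle you name, that finite-horizon solutions are incompatible, is exactly what Lemma~\ref{lem:finite-infinite-shadowing} overcomes for linear isomorphisms, with no compactness or reflexivity needed. That is the paper's proof.
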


We do not establish generalized hyperbolicity of $T_{\alpha,A}$.
The proof is given in Subsection~\ref{sec:finite-cont}.
For spectral characterizations of shadowing and its relation to uniform
expansivity, see \cite{PitukSpectral,DragicevicPituk}.
The distinction between shadowing and generalized hyperbolicity in linear
dynamics is discussed in Subsection~\ref{sec:future-research}.
The Hilbert-space results mentioned there do not directly apply here:
even when $\Bb$ is a Hilbert space,
$X=C_b^0(\Bb,\Bb)$ is not a Hilbert space.

\subsection{Scheme of the proofs}
\label{sec:proof-scheme}

We outline the cohomological approach to Theorems~\ref{thm:main},
\ref{thm:topological-stability},
and~\ref{thm:product-structural-stability}, emphasizing the additional
compatibility needed for conjugacy.
Put $X=C_b^0(\Bb,\Bb)$ and $g=f+\Delta$.  Writing the semiconjugacies
as $H=\Id+h$ and $K=\Id+k$ gives
\[
\begin{aligned}
 h(f(x))-Df(x)h(x)
 &=s_f(x,h(x))+\Delta(x+h(x)),\\
 k(g(x))-Df(x)k(x)
 &=s_f(x,k(x))-\Delta(x).
\end{aligned}
\]
Both equations require bounded continuous solvability of a linear
cohomological equation, followed by a nonlinear correction.  The
reverse equation changes the base from $f$ to $g$, while retaining
the coefficient $Df$.  We therefore develop the linear theory for
cocycles $(\alpha,A)$ with an independent base map and coefficient.
We describe the argument in the order of its presentation.

\medskip
\noindent\textbf{1. Orbitwise estimates and continuous finite-time solutions.}
Generalized hyperbolicity gives uniformly bounded solutions along
each orbit, but these need not depend continuously on the base point.
Michael's selection theorem gives continuous solutions to finite-time
recurrence problems, with bounds independent of the horizon.  As a
by-product, these finite-time estimates yield Lipschitz shadowing for
$T_{\alpha,A}$, using the Bernardes--Peris finite-to-infinite shadowing
result (Theorem~\ref{thm:induced-shadowing}).

\medskip
\noindent\textbf{2. Continuous solvability of cohomological equations.}
Birkhoff averaging of the finite-time solutions gives bounded
continuous approximate solutions of the cohomological equation.
Successive corrections then give exact solutions
(Theorem~\ref{thm:cocycle-D-surj}).  No continuous choice of solution
as a function of the forcing is needed.

\medskip
\noindent\textbf{3. Changing the base for the reverse equation.}
Every orbit of $g$ is a pseudotrajectory of $f$.  Shadowing and uniform
continuity of $Df$ allow us to compare the derivative sequences along
these orbits with those along exact $f$-orbits.  Periodic shadowing
handles periodic orbits.  Robustness for sequences of operators then
gives generalized hyperbolicity of $(g,Df)$, with uniform constants
(Theorem~\ref{thm:g-Df-GH}).  This step relies on the Lipschitz and
periodic shadowing theorems and the robustness lemma of
\cite{TikhomirovGH}, recalled in Subsection~\ref{sec:shadowing-background}.  The cocycle solvability theorem
consequently applies to both cocycles $(f,Df)$ and $(g,Df)$.

\medskip
\noindent\textbf{4. Constructing the semiconjugacies.}
The nonlinear terms in the two semiconjugacy equations have small
Lipschitz constants on a sufficiently small ball.  The bounded linear
solvability estimates therefore allow us to apply the
successive-correction argument of
Lemma~\ref{lem:successive-corrections}, a quantitative
Lyusternik--Graves iteration.  The corrections converge geometrically
and give continuous $h,k$ with norms bounded by a constant times the
perturbation size, proving Theorem~\ref{thm:main}.  In the reverse
equation the perturbation cancels from the nonlinear term, so this
equation requires no regularity of $g$ beyond continuity; this gives
Theorem~\ref{thm:topological-stability}.  Without expansivity,
these two semiconjugacies need not be mutually inverse.  The product
construction addresses this additional difficulty by choosing
compatible normalizations.

\medskip
\noindent\textbf{5. Scalar estimates and normalized Green operators.}
We continue with the proof of structural stability of the infinite-product example.
Uniform scalar estimates and a fixed cutoff give explicit Green
formulas, together with bounds for their dependence on the corrected
trajectory.  These formulas define a bounded linear right inverse
$O_f$ for the forward equation.  For the reverse equation, the
normalization
$
 u=\mathcal S_g(u)w
$
defines a local nonlinear solution operator $u=O_gw$
(Lemmas~\ref{lem:product-Of} and~\ref{lem:product-Og}).  The derivative
and cutoff in this formula are evaluated at $y+u(y)$.  This choice
gives the covariance under composition needed in the final step.
The normalization also ensures uniqueness of the selected solution
and Lipschitz dependence on $w$.

\medskip
\noindent\textbf{6. Pullback compatibility, remainder estimates, and conjugacy.}
We first prove pullback compatibility of the normalized operators
(Lemma~\ref{lem:product-pullback}).  We then establish Lipschitz and
cancellation estimates for the remainder operators $G$ and $R$
(Lemma~\ref{lem:product-remainders}).  The Lipschitz estimates make
$
 h=O_fG(h)$, $k=O_gR(k)$
contraction problems on a small ball, giving the two semiconjugacies.
Combining pullback compatibility with the cancellation estimates
yields
\[
\begin{aligned}
 |h+k\circ H|_\infty
 &\leq q_1|h+k\circ H|_\infty,\qquad q_1<1,\\
 |k+h\circ K|_\infty
 &\leq q_2|k+h\circ K|_\infty,\qquad q_2<1.
\end{aligned}
\]
Both composition defects vanish, so $K\circ H=H\circ K=\Id$.
These estimates prove invertibility without assuming it in the
pullback argument.  They are uniform in $p$, including $p=\infty$,
and establish Theorem~\ref{thm:product-structural-stability}.

\subsection{Future research}
\label{sec:future-research}

\textbf{From generalized hyperbolicity to structural stability.}
Does every generalized-hyperbolic $f\in\DUC(\Bb)$ satisfy structural
stability in the sense of Definition~\ref{def:stability}?
Theorem~\ref{thm:main} supplies two continuous semiconjugacies, but
their mutual invertibility remains open.  Extending the product argument
requires compatible solution operators, as in the covariance and pullback
results of Subsection~\ref{sec:product-pullback}, or another mechanism
yielding the inverse identities.  A natural test case is a coordinatewise
product of planar Morse-Smale diffeomorphisms with saddles.  There the
stable direction varies from point to point, the derivative is diagonal
in no fixed frame, and trajectories may stay near a saddle for
arbitrarily long times, so neither the scalar estimates nor the scalar
cutoff of Section~\ref{sec:infinite-morse-smale} have direct analogues.

\textbf{A converse characterization.}
It is also natural to ask whether structural stability in this global
category implies generalized hyperbolicity, as is the case for
diffeomorphisms of smooth closed manifolds, and whether the answer
depends on the Banach space.  Recent announcements in linear dynamics
show that the analogous converse from shadowing is sensitive to the
category.  Pituk \cite{Pituk} proves that shadowing
is equivalent to generalized hyperbolicity for invertible bounded
operators on separable complex Hilbert spaces, but constructs a
counterexample on a complex Banach space; see also \cite{MessaoudiEtal}.
The distinction involves surjectivity versus the existence of bounded
linear right inverses.  These results concern shadowing and motivate
examining the geometry of the phase space in a structural-stability
characterization.  They do not settle the nonlinear
converse posed here.

\textbf{Beyond the global invertible category.}
The class $\DUC(\Bb)$ contains nonlinear shifts and infinite products
of Morse--Smale systems \cite{TikhomirovGH}, but does not directly
cover the usual semiflows of delay and parabolic evolution equations
discussed in Subsection~\ref{sec:historical-overview}.
Their time maps are generally noninvertible on the natural phase spaces,
and their regularity assumptions differ from the global derivative
bounds and uniform continuity required here; see also
\cite[Remark~5]{TikhomirovGH}.  An extension must therefore address
both the use of backward trajectories and the relevant phase spaces,
regularity, and notion of stability.
Neutral delay differential equations may offer a natural setting in which
the full scope of generalized hyperbolicity becomes relevant: 
infinite-dimensional unstable subspaces can occur
\cite{HaleVerduynLunel,HenryNeutral}.

Even for noninvertible maps of finite-dimensional compact manifolds,
a general characterization of structural stability on the original
phase space by Axiom~A and transversality remains incomplete.
The inverse-limit stability theorem of Berger and Kocsard
\cite{BergerKocsard} emphasizes the distinction between stability of
full-orbit dynamics and stability on the original space.

\section{Dynamical and functional-analytic preliminaries}
\label{sec:Michael}

We now turn to the proofs.  We first recall the shadowing and
robustness results used in the cocycle arguments, then collect the
functional-analytic tools for constructing continuous solutions.

\subsection{Shadowing and robustness}
\label{sec:shadowing-background}

We recall the shadowing terminology; for general background, see
\cite{PalmerShadowing,PilyuginShadowing}.
Let $F:X\to X$ be a homeomorphism of a Banach space.  For $d>0$, a
sequence $(y_k)_{k\in I}$, where $I$ is a finite integer interval or
$\Zz$, is a \emph{$d$-pseudotrajectory} if
\[
 \norm{y_{k+1}-F(y_k)}\leq d,
 \qquad k,k+1\in I.
\]
It is \emph{$\varepsilon$-shadowed} if there exists $z\in X$ such that
$\sup_{k\in I}\norm{y_k-F^k(z)}\leq\varepsilon$.
The map $F$ has \emph{finite shadowing} if, for every $\varepsilon>0$,
there exists $d>0$ such that every finite $d$-pseudotrajectory is
$\varepsilon$-shadowed.  Replacing finite intervals by $\Zz$ defines
the \emph{shadowing property}.  In the Lipschitz versions, there are
constants $M,d_0>0$ such that every $d$-pseudotrajectory of the
corresponding type is $Md$-shadowed for $0<d<d_0$.

\begin{lem}
\label{lem:finite-infinite-shadowing}
Let $T:X\to X$ be a bounded linear isomorphism of a Banach space.
If $T$ has finite shadowing, then it has shadowing.
More precisely, if $T$ has finite Lipschitz shadowing with constant
$L>0$, then, for every $d>0$ and every $d$-pseudotrajectory
$(y_k)_{k\in\Zz}$, there exists $z\in X$ such that
\[
 \sup_{k\in\Zz}\norm{y_k-T^kz}\leq2Ld.
\]
\end{lem}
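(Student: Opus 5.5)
The plan is to derive two-sided shadowing from finite shadowing by applying finite shadowing to the symmetric truncations of the pseudotrajectory and extracting a limit of the resulting initial points. Because $T$ is linear, the finite-shadowing defect is linear in $z$. Hence the set of admissible initial points for each finite window is a closed convex set, and uniform bounds are available.

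\textbf{Step 1: reduction to the Lipschitz version.} If $T$ has finite shadowing, linearity upgrades it to finite Lipschitz shadowing. Fix $\varepsilon=1$ with its $d_1$. Given a finite $d$-pseudotrajectory $(y_k)$, the rescaled sequence $(d_1/d)\,y_k$ is a $d_1$-pseudotrajectory, since the defect $y_{k+1}-Ty_k$ scales linearly. Its shadowing point $z$ rescales back to a point $(d/d_1)z$ that $(d/d_1)$-shadows $(y_k)$. So $L=1/d_1$ works for all $d>0$, and it suffices to prove the quantitative statement.

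\textbf{Step 2: finite windows.} Let $(y_k)_{k\in\Zz}$ be a $d$-pseudotrajectory and set $\varepsilon=Ld$. For each $N$, apply finite shadowing to $(y_k)_{|k|\le N}$, reindexed to start at $0$; this gives $z_N$ with $\norm{y_k-T^kz_N}\le Ld$ for $|k|\le N$. In particular $\norm{z_N-y_0}\le Ld$, so the $z_N$ are bounded.

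\textbf{Step 3: passage to the limit, the main obstacle.} In infinite dimensions there is no compactness to extract a convergent subsequence of $z_N$, and without weak compactness in a general Banach space no weak limit is available either. The factor $2$ in the statement suggests an averaging or telescoping argument instead. I would try to build $z$ as a convergent series of corrections. Put $u_N=z_{N+1}-z_N$. Then $\norm{T^ku_N}\le 2Ld$ for $|k|\le N$, but this alone does not make $u_N$ small.

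I would therefore refine the choice with a Bernardes--Peris type argument. First use finite shadowing to show that, for $|k|\le N$, $T^k$ restricted to near-zero finite orbits decomposes with a splitting of uniform constants. Concretely, the linear finite shadowing property yields, for each $N$, a bounded right inverse of the finite-difference operator with norm at most $L$. Then apply the finite result on windows $[-N,N]$ to the sequence $y_k-T^kz_{N}$, whose defect is still $d$ but which is already $Ld$-small on the old window. This is where the factor $2$ should come from. The aim is to arrange that the corrections $u_N$ are supported in ``directions'' that decay geometrically, giving a Cauchy sequence with limit $z$ satisfying $\norm{y_k-T^kz}\le 2Ld$ for all $k$.

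If this telescoping cannot be closed directly, the fallback is to cite the Bernardes--Peris theorem \cite{BernardesPeris}, which states exactly that finite shadowing implies shadowing for invertible operators, and track its constant to get $2L$.
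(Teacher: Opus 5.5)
There is a genuine gap. Step~3 is the entire content of the lemma, it is not carried out, and the mechanisms you sketch would not close it.

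\begin{itemize}
\item The differences $u_N=z_{N+1}-z_N$ only satisfy $\norm{T^ku_N}\le 2Ld$ for $|k|\le N$, with no decay.
\item Reapplying finite shadowing to $y_k-T^kz_N$ gains nothing. Subtracting an exact orbit leaves the defect unchanged, still of size $d$, so the new shadowing error is again $Ld$. No geometric decay appears, and this is not where the factor $2$ comes from.
\item Finite shadowing gives bounded \emph{preimages} for the finite-difference operator, not a \emph{linear} right inverse of norm $L$.
\item You give no way to extract a splitting with constants uniform in $N$. Such a splitting is close to generalized hyperbolicity, which shadowing does not imply on general Banach spaces (Pituk's counterexample, Subsection~\ref{sec:future-research}).
\item Citing \cite{BernardesPeris} yields the qualitative statement; the paper itself only cites \cite{BernardesPeris} and \cite[Appendix~A]{TikhomirovGH} here. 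But ``tracking the constant'' is not an argument for the bound $2Ld$.
\end{itemize}

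The missing idea is to shrink the error by averaging, in a space where different times are not coupled by any continuity requirement.

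\emph{Reformulation.} Put $Y=\ell^\infty(\Zz,X)$, $(Sv)_k=Tv_{k-1}$ and $w_k=y_k-Ty_{k-1}$, so $\norm{w}_Y\le d$. Suppose $v\in Y$ solves $v-Sv=w$. Then $x_k:=y_k-v_k$ satisfies $x_k=Tx_{k-1}$ for all $k$, hence $x_k=T^kz$ with $z=y_0-v_0$ (invertibility handles $k<0$). Therefore $\norm{y_k-T^kz}\le\norm{v}_Y$.

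\emph{Finite recurrences.} To solve $v-Sv=w$, verify the hypothesis of Lemma~\ref{lem:Birkhoff} for $S$ on $Y$ with $M_0=L$. Given $W\in Y$ and $N\ge1$, treat each $m\in\Zz$ separately. Use finite Lipschitz shadowing (valid for all $d>0$ by your Step~1) to choose $x^{(m)}_0,\dots,x^{(m)}_N$ with $x^{(m)}_{j+1}=Tx^{(m)}_j+W_{m+j+1}$ and $\norm{x^{(m)}_j}\le L\norm{W}_Y$. Set $(V_j)_k:=x^{(k-j)}_j$. Then $V_{j+1}=SV_j+W$ and $\norm{V_j}_Y\le L\norm{W}_Y$.

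\emph{Averaging.} The average $u=\frac1N\sum_{j<N}V_j$ satisfies $u-Su=W+\frac1N(V_0-V_N)$. The error term has norm at most $2L\norm{W}_Y/N$. The geometric series of corrections in Lemma~\ref{lem:Birkhoff} then gives $v$ with $\norm{v}_Y\le 2L\norm{w}_Y\le 2Ld$.

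The factor $2$ is the bound $M_0/(1-\kappa)$ with $\kappa<1/2$. The bi-infinite index set causes no difficulty, because each diagonal $k-j=m$ is a finite problem solved independently.
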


The qualitative statement is due to Bernardes and Peris
\cite[Theorem~1]{BernardesPeris}; the quantitative form with constant
$2L$ is recorded, with its proof, in \cite[Appendix~A]{TikhomirovGH}.

\begin{thm}
\label{thm:GH-shadowing}
Let $f\in\DUC(\Bb)$ satisfy generalized hyperbolicity in the sense of
Definition~\ref{def:GH}.  There exist constants $M,d_0>0$ such that,
for every $0<d<d_0$, the following hold.
\begin{enumerate}[label=\textup{(\roman*)},leftmargin=10mm]
\item \textbf{Lipschitz shadowing.}
For every $d$-pseudotrajectory $(y_k)_{k\in\Zz}$ of $f$, there exists
$x\in\Bb$ such that
\[
 \sup_{k\in\Zz}\abs{y_k-f^k(x)}\leq Md.
\]
\item \textbf{Lipschitz periodic shadowing.}
For every integer $m\geq1$ and every $d$-pseudotrajectory
$(y_k)_{k\in\Zz}$ of $f$ with $y_{k+m}=y_k$ for all $k\in\Zz$,
there exists $x\in\Bb$ such that
\[
 f^m(x)=x,
 \qquad
 \sup_{k\in\Zz}\abs{y_k-f^k(x)}\leq Md.
\]
\end{enumerate}
\end{thm}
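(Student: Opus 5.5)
The plan is the standard Newton/contraction argument for pseudotrajectories, built on the orbitwise linear theory that generalized hyperbolicity supplies. For (i), given a $d$-pseudotrajectory $(y_k)_{k\in\Zz}$, I will look for a correction sequence $(v_k)\in\ell^\infty(\Zz,\Bb)$ with $y_k+v_k$ an exact orbit, that is,
\[
 v_{k+1}-Df(y_k)v_k = s_f(y_k,v_k)+\bigl(f(y_k)-y_{k+1}\bigr).
\]
The linear part is the operator $(\mathcal L v)_{k+1}=v_{k+1}-A_kv_k$ on $\ell^\infty(\Zz,\Bb)$, with $A_k=Df(y_k)$. Because $(y_k)$ is not an $f$-orbit, the splitting at $y_{k+1}$ does not in general contain $Df(y_k)E^s_{y_k}$. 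So the first step is robustness. Using the uniform continuity of $Df$ and (GH3), I will show that the sequence $(A_k)$ admits a generalized-hyperbolic splitting with constants $C',\lambda'$ depending only on $C,\lambda,R,r$, provided $d$ is small. This is the robustness lemma for sequences of operators from \cite{TikhomirovGH}; I would reprove it by a Lyapunov-norm/cone argument over a block length $N$ with $C\lambda^N<1/4$. On each block, the $N$-step products along the pseudotrajectory are $o(1)$-close to the products along true orbits, uniformly as $d\to0$ by \eqref{eq:C1-r1}.

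Next comes bounded solvability of the linear equation: $\mathcal L$ has a bounded right inverse $\mathcal G$ with $\|\mathcal G\|\le K$. I will construct it by the Green formula
\[
 v_k=\sum_{j< k}A^{k-j-1}\cdots P_{j+1}w_{j+1}-\sum_{j\ge k}(\cdots)^{-1}Q_{j+1}w_{j+1}.
\]
Only the inclusions in one direction are used in each sum, so strict inclusions cause no problem: the stable sum needs $A E^s\subseteq E^s$ forward, and the unstable sum needs $A^{-1}E^u\subseteq E^u$ backward. The bound is $K\le 2C^2/(1-\lambda)$. Uniqueness is not needed.

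With $\mathcal G$ in hand, I set $v=\mathcal G\bigl(S(v)+e\bigr)$, where $S(v)_{k+1}=s_f(y_k,v_k)$ and $\|e\|\le d$. On the ball of radius $\rho$, \eqref{eq:s-rho} makes $S$ an $r(\rho)$-Lipschitz map. I choose $\rho$ with $Kr(\rho)\le1/2$ and then $d_0$ with $2Kd_0\le\rho$. The map is then a contraction of the ball $\{\|v\|\le 2Kd\}$, and the fixed point gives $x=y_0+v_0$ with $M=2K$. If one only has an approximate right inverse, the successive-correction scheme gives the same conclusion.

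For (ii), I apply the same construction to the periodic sequence restricted to $\ell^\infty$-sequences of period $m$. The obstacle is that the right inverse $\mathcal G$ need not preserve periodicity. I will handle this by periodizing the splitting, choosing the projections at $y_k$ depending only on $k \bmod m$. This is legitimate because the robustness construction is local in $k$ and can be performed on the periodic index set $\Zz/m\Zz$. The Green formula then commutes with the shift by $m$, and the fixed point is unique in the ball and hence $m$-periodic, so $f^m(x)=x$. I expect the main obstacle to be the robustness step producing uniform constants along pseudotrajectories, in particular making the block-length argument work with merely inclusion-invariant, discontinuous splittings and keeping it compatible with periodicity. The contraction part is routine.
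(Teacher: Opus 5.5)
The paper does not prove this theorem; it imports it from \cite[Theorem~11]{TikhomirovGH}. Your outer layer is fine. Once $\mathcal L$ has a bounded linear right inverse $\mathcal G$ whose norm does not depend on the pseudotrajectory, the contraction on the ball of radius $2Kd$ works, and so do shift-equivariance and uniqueness for (ii).

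The gap is the construction of $\mathcal G$, which is the whole content of the theorem. Lemma~\ref{lem:sequence-GH-robustness} needs a generalized-hyperbolic reference sequence uniformly close to $(Df(y_k))_{k\in\Zz}$. Along an arbitrary pseudotrajectory the only candidate is $(Df(x_k))$ for a shadowing orbit $(x_k)$, which is exactly what you are trying to construct. This is why the paper proves Theorem~\ref{thm:g-Df-GH} in the opposite order: shadowing first, then robustness.

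Your block-wise replacement does not close this loop. On the block starting at $y_{jN}$ you compare with the orbit of $y_{jN}$, whose splitting at the end of the block sits at $f^N(y_{jN})$. The next block, however, starts with the splitting at $y_{(j+1)N}$. These two points are $O(d)$-close, but the splitting is discontinuous; for the product of Subsection~\ref{sec:product-results} the stable dimension changes between arbitrarily close points. So cones do not chain across block junctions, and the concatenated reference sequence is not generalized hyperbolic.

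There is a second obstruction even along a single exact orbit. Strict inclusions make the $N$-step map block upper triangular with a possibly non-injective unstable block: for the weighted shift, $e_{-1}\in E^u$ is mapped into $E^s$. Hence unstable cones are not invariant. The periodic robustness you invoke in (ii) has the same two defects.

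The missing idea is that you never need a splitting for $(Df(y_k))$. An approximate right inverse built from truncated Perron sums is enough, provided it uses only the projections at the points $y_j$ themselves. With $A_k=Df(y_k)$, a window length $N$, and empty products equal to $\Id$, set
\[
 (\mathcal G_0w)_k=\sum_{j=k-N+1}^{k}A_{k-1}\cdots A_jP_{y_j}w_j
 -\sum_{j=k+1}^{k+N}A_k^{-1}\cdots A_{j-1}^{-1}Q_{y_j}w_j .
\]
Telescoping gives
\[
 (\mathcal L\mathcal G_0w)_{k+1}=w_{k+1}
 -A_k\cdots A_{k-N+1}P_{y_{k-N+1}}w_{k-N+1}
 -A_{k+1}^{-1}\cdots A_{k+N}^{-1}Q_{y_{k+N+1}}w_{k+N+1}.
\]
Each error term is an $N$-step product along the pseudotrajectory that starts or ends at a single point $y_j$. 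We have $\abs{f^i(y_j)-y_{j+i}}\leq d\sum_{l<i}R^l$, and similarly backwards. Together with \eqref{eq:C1-r1} and the corresponding bound for $Df^{-1}$, this shows that for fixed $N$ these products are $o(1)$-close, as $d\to0$, to $Df^N(y_j)P_{y_j}$ and $D(f^{-N})(y_j)Q_{y_j}$. By \textup{(GH1)} and \textup{(GH3)} at $y_j$, those two operators have norm at most $C^2\lambda^N$.

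Choose $N$ with $2C^2\lambda^N<1/4$, then $d_0$ small; this gives $\norm{\mathcal L\mathcal G_0-\Id}\leq1/2$. Hence $\mathcal G:=\mathcal G_0(\mathcal L\mathcal G_0)^{-1}$ is a bounded linear right inverse with a uniform bound. Splittings at two different points are never compared. For $m$-periodic $(y_k)$, the projections $P_{y_j}$ are automatically $m$-periodic because they depend only on the points. So $\mathcal G$ commutes with the $m$-shift, and your argument for (ii) then applies verbatim.
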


This is \cite[Theorem~11\,(M1)--(M2)]{TikhomirovGH}.  The constants
are independent of the pseudotrajectory and its period.  No
regularity of the generalized-hyperbolic splitting is required.

We also need robustness of generalized hyperbolicity for sequences of
operators.  For a sequence $\mathcal A=(A_k)_{k\in\Zz}$ of bounded
linear isomorphisms of $\Bb$ with uniformly bounded operator and inverse
norms, generalized hyperbolicity with constants $C>0$, $\lambda\in(0,1)$
means that there exist bounded projections
$P_k,Q_k$, with $E_k^s=P_k(\Bb)$ and $E_k^u=Q_k(\Bb)$, satisfying
\[
 P_k+Q_k=\Id,\qquad \norm{P_k},\norm{Q_k}\leq C,
\qquad
 A_kE_k^s\subseteq E_{k+1}^s,\qquad
 A_k^{-1}E_{k+1}^u\subseteq E_k^u,
\]
and, for every $k\in\Zz$ and $n\geq0$,
\[
\begin{aligned}
 \abs{A_{k+n-1}\cdots A_kv^s}
 &\leq C\lambda^n\abs{v^s}, &&v^s\in E_k^s,\\
 \abs{A_{k-n}^{-1}\cdots A_{k-1}^{-1}v^u}
 &\leq C\lambda^n\abs{v^u}, &&v^u\in E_k^u.
\end{aligned}
\]
An $m$-periodic sequence satisfies
\emph{periodic generalized hyperbolicity} if these projections can be
chosen $m$-periodic.

\begin{lem}
\label{lem:sequence-GH-robustness}
Assume that $\mathcal A=(A_k)_{k\in\Zz}$ satisfies generalized
hyperbolicity with constants $C>0$, $\lambda\in(0,1)$ and
$
 \norm{A_k},\norm{A_k^{-1}}\leq R$.
For every $\lambda'\in(\lambda,1)$ there exist $C_1>0$ and
$\varepsilon>0$, depending only on $C,\lambda,\lambda',R$, such that
the following hold.
\begin{enumerate}[label=\textup{(\roman*)},ref=(\roman*),leftmargin=10mm]
\item \label{item:sequence-robustness-general}
Every sequence $\mathcal B=(B_k)_{k\in\Zz}$ of bounded linear
operators satisfying
\begin{equation}
\label{eq:sequence-robustness-close}
 \norm{B_k-A_k}<\varepsilon,\qquad k\in\Zz,
\end{equation}
satisfies generalized hyperbolicity with constants $C_1,\lambda'$.
\item \label{item:sequence-robustness-periodic}
If, in addition, $\mathcal A$ is $m$-periodic and satisfies periodic
generalized hyperbolicity, then every $m$-periodic sequence $\mathcal B$
satisfying \eqref{eq:sequence-robustness-close} satisfies periodic
generalized hyperbolicity with the same constants $C_1,\lambda'$.
\end{enumerate}
The constants are independent of the sequences and of the period $m$.
\end{lem}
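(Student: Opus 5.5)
I will prove Lemma~\ref{lem:sequence-GH-robustness} by passing from the splitting to bounded solvability of the inhomogeneous difference equation and back. The route is the classical admissibility approach, adapted to inclusions rather than equalities.

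\textbf{Step 1: a right inverse for $\mathcal A$.} Consider $\ell^\infty(\Zz,\Bb)$ and the operator
\[
(\mathcal L_{\mathcal A}v)_{k+1}=v_{k+1}-A_kv_k .
\]
Given $w$, define the Green sum
\[
v_{k}=\sum_{n\geq1}A_{k-1}\cdots A_{k-n+1}P_{k-n+1}w_{k-n}
-\sum_{n\geq0}A_{k}^{-1}\cdots A_{k+n}^{-1}Q_{k+n+1}w_{k+n},
\]
with indices adjusted so that the equation holds. The inclusions in the definition guarantee that each term lies in the correct space, and the exponential estimates give convergence. This yields a bounded right inverse $\mathcal G_{\mathcal A}$ with $\norm{\mathcal G_{\mathcal A}}\leq K(C,\lambda)$. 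Verifying $\mathcal L\mathcal G w=w$ uses only $P_k+Q_k=\Id$ and the inclusions; uniqueness is not needed. The same argument works in the weighted spaces with weight $\mu^{|k-k_0|}$, for $\mu$ slightly larger than $1$ and close enough to $\lambda^{-1}$ that the sums still converge. This is where $\lambda'$ enters: I use weights $(\lambda')^{\pm}$ relative to the rate $\lambda$.

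\textbf{Step 2: perturbation.} Write $\mathcal L_{\mathcal B}=\mathcal L_{\mathcal A}-(\mathcal B-\mathcal A)$, where $\norm{\mathcal B-\mathcal A}<\varepsilon$. By a Neumann series,
\[
\mathcal G_{\mathcal B}=\mathcal G_{\mathcal A}\bigl(\Id-(\mathcal B-\mathcal A)\mathcal G_{\mathcal A}\bigr)^{-1}
\]
is a bounded right inverse for $\mathcal L_{\mathcal B}$, uniformly in the weighted norms, once $\varepsilon K<1/2$.

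\textbf{Step 3: reconstruct the splitting.} For each $k$, define $E_k^s(\mathcal B)$ as the set of initial vectors at time $k$ of forward solutions of $\mathcal B$ decaying like $(\lambda')^n$. Define $E_k^u(\mathcal B)$ analogously with backward solutions. Both sets are closed subspaces, and the inclusions $B_kE^s_k\subseteq E^s_{k+1}$ and $B_k^{-1}E^u_{k+1}\subseteq E^u_k$ hold automatically. The projections come from the right inverse: for $\xi\in\Bb$, let $w=\xi\delta_k$ be the forcing concentrated at time $k$, and set $u=\mathcal G_{\mathcal B}w$. Then $u$ restricted to $n\geq k$ and the part for $n<k$, each suitably adjusted, give
\[
\xi=u_k^{+}-u_k^{-},\qquad u_k^{+}\in E_k^s,\quad u_k^{-}\in E_k^u .
\]
I define $P_k\xi:=u_k^{+}$. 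The weighted bounds on $\mathcal G_{\mathcal B}$ give $\norm{P_k}\leq C_1$ and the exponential estimates with constants $C_1,\lambda'$. These hold for \emph{all} vectors of $E^s_k$, not only those produced by the construction, because a decaying solution with weighted bound is controlled through $\mathcal G_{\mathcal B}$ applied to a truncation. Linearity of $P_k$ requires $\mathcal G_{\mathcal B}$ to be linear, which the Neumann formula ensures.

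The direct-sum property can fail for general subspaces: $E^s_k\cap E^u_k$ may be nontrivial, corresponding to the nonexpansive case. I will therefore avoid taking the full decaying spaces and instead take $E^s_k:=P_k(\Bb)$ and $E^u_k:=(\Id-P_k)(\Bb)$. I will then check that $P_k$ is idempotent and that the inclusions hold. This is the step I expect to be the main obstacle: idempotence and invariance of the image of a non-canonical right inverse. To handle it, I would use the identity $\mathcal G_{\mathcal B}(\mathcal B\text{-shift})$ together with the fact that $\mathcal G_{\mathcal A}$ is built from the genuine projections. If this proves delicate, I will fall back on the explicit robustness proof in \cite{TikhomirovGH}, which this lemma recalls.

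\textbf{Periodic case.} If $\mathcal A$ and its projections are $m$-periodic, then $\mathcal G_{\mathcal A}$ commutes with the shift by $m$, and so does $\mathcal G_{\mathcal B}$ by the Neumann formula. Hence the constructed $P_k$ are $m$-periodic. All constants depend only on $C,\lambda,\lambda',R$, and not on $m$.
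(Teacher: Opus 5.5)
The paper gives no proof of this lemma; it quotes \cite[Lemma~35]{TikhomirovGH}. Taken on its own terms, your proposal has a genuine gap, and it is the one you flag yourself: Step~3 is the whole content of the lemma, and it is not carried out.

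Steps~1--2 are sound once the indices in the Green sum are fixed and $\varepsilon R<1$ is imposed, so that the $B_k$ are isomorphisms. Note, however, that these steps use neither inclusion for $\mathcal A$: the Perron identity needs only $P_k+Q_k=\Id$, and the bounds need only (GH3).

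The properties you extract for $\mathcal G_{\mathcal B}$ are linearity, being a right inverse of $\mathcal L_{\mathcal B}$, and uniform boundedness on the weighted spaces. These cannot by themselves make $P_k\xi:=(\mathcal G_{\mathcal B}(\xi\delta_k))_k$ idempotent. If $u=\mathcal G_{\mathcal B}(\xi\delta_k)$, then $\mathcal G_{\mathcal B}((P_k\xi)\delta_k)$ differs from the restriction of $u$ to $n\geq k$ (extended by zero) by an element of $\ker\mathcal L_{\mathcal B}$. That kernel is nonzero in the nonexpansive situations the lemma is meant to cover. Here is a concrete instance:
\begin{itemize}
\item Take $\Bb=\Rr$, $A_k=2$ for $k<0$, $A_k=1/2$ for $k\geq0$, and $\mathcal B=\mathcal A$.
\item Add to the Perron right inverse the rank-one map $w\mapsto w_0\,(2^{-|n|})_{n\in\Zz}$.
\item The result is again a right inverse, bounded uniformly on every weighted space with $1<\mu<2$, yet it gives $P_0=2$.
\end{itemize}
The same example refutes your truncation argument for ``all vectors'': for $k<0$ every $\xi\in\Rr$ is the initial value of a decaying forward solution, but only with constant $2^{|k|}$.

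What is missing is to use the specific fixed-point form $v=\mathcal G_{\mathcal A}\bigl(w+(\mathcal B-\mathcal A)v\bigr)$ of your Neumann inverse together with both inclusions for $\mathcal A$. This amounts to a Lyapunov--Perron construction of the new spaces as graphs over the old ones. Write $\Delta_j:=B_j-A_j$. For $\xi\in E^s_k$, solve in the space of sequences $(v_n)_{n\geq k}$ with $\sup_n(\lambda')^{-(n-k)}\abs{v_n}<\infty$:
\[
 v_n=A_{n-1}\cdots A_k\xi+\sum_{k<j\leq n}A_{n-1}\cdots A_jP_j\Delta_{j-1}v_{j-1}
 -\sum_{j>n}A_n^{-1}\cdots A_{j-1}^{-1}Q_j\Delta_{j-1}v_{j-1}.
\]
This is a contraction for small $\varepsilon$. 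The argument then runs as follows.
\begin{itemize}
\item The backward inclusion puts the last sum at $n=k$ into $E^u_k$. Hence $P_kv_k=\xi$, and $\widetilde E^s_k:=\{v_k:\xi\in E^s_k\}$ is the graph of an operator $\Psi_k:E^s_k\to E^u_k$ of norm $O(\varepsilon)$.
\item The forward inclusion shows that $(v_n)_{n\geq k+1}$ solves the same equation at time $k+1$ with parameter $P_{k+1}v_{k+1}$. Uniqueness then gives $B_k\widetilde E^s_k\subseteq\widetilde E^s_{k+1}$.
\item The bound $\abs{\xi}\leq C\abs{v_k}$ turns the weighted estimate into the uniform exponential estimate for every vector of $\widetilde E^s_k$.
\item The symmetric backward equation gives $\widetilde E^u_k$ as the graph of some $\Phi_k:E^u_k\to E^s_k$, also of norm $O(\varepsilon)$.
\item Two such graphs over complementary subspaces with projections of norm at most $C$ are complementary, with projections controlled by $C$ once $\varepsilon$ is small. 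Uniqueness also transfers $m$-periodicity.
\end{itemize}
With this in hand, one checks that the forward half of $u=\mathcal G_{\mathcal B}(\xi\delta_k)$, and its backward half completed by $u_k-\xi=B_{k-1}u_{k-1}$, solve these forward and backward equations. So your $P_k$ is exactly the projection onto $\widetilde E^s_k$ along $\widetilde E^u_k$. Your route can therefore be completed, but only through this analysis; deferring it to \cite{TikhomirovGH} does not constitute a proof.
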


This is \cite[Lemma~35\,(Robust1)--(Robust2)]{TikhomirovGH}.

\subsection{Functional-analytic background}
\label{sec:functional-analytic-background}

We begin with the selection theorem in the form used below.

\begin{defin}
\label{def:lsc}
Let $X,Y$ be topological spaces.  A set-valued map
$\Theta:X\to2^Y$ is \emph{lower semicontinuous} if, for every open set
$U\subset Y$, the set
$
 \{x\in X:\Theta(x)\cap U\neq\varnothing\}
$
is open in $X$.
\end{defin}

\begin{thm}[Michael's continuous selection]
\label{thm:Michael}
Let $X$ be a paracompact topological space and let $Y$ be a Banach space.
Suppose that $\Theta:X\to2^Y$ is lower semicontinuous and for every
$x\in X$, the set $\Theta(x)$ is a nonempty closed convex subset of $Y$.
Then $\Theta$ admits a continuous selection: there exists a continuous map
$a:X\to Y$ such that
$a(x)\in\Theta(x)$ for $x\in X$.
\end{thm}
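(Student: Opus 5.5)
The plan is the standard two-stage argument: first produce continuous \emph{approximate} selections using paracompactness, then refine them successively and pass to a uniform limit, with closedness of the values ensuring the limit is an exact selection. Throughout, $B(y,r)$ denotes the open ball in $Y$, and convexity of $\Theta(x)$ is what allows convex combinations produced by a partition of unity to stay close to $\Theta(x)$.

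\textbf{Step 1: approximate selections.} Let $\Phi:X\to2^Y$ be lower semicontinuous with nonempty convex values, and let $\varepsilon>0$. We claim there is a continuous $\phi:X\to Y$ with $\dist(\phi(x),\Phi(x))<\varepsilon$ for every $x$. For each $y\in Y$ the set $U_y:=\{x:\Phi(x)\cap B(y,\varepsilon)\neq\varnothing\}$ is open by lower semicontinuity. These sets cover $X$, since every $\Phi(x)$ is nonempty. By paracompactness, choose a locally finite partition of unity $(p_i)_{i\in I}$ subordinate to this cover, with $\operatorname{supp}p_i\subset U_{y_i}$, and put $\phi(x):=\sum_i p_i(x)y_i$. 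This is continuous because the sum is locally finite. For fixed $x$, every $y_i$ with $p_i(x)>0$ lies within $\varepsilon$ of some point of $\Phi(x)$. The set $\Phi(x)+B(0,\varepsilon)$ is convex and contains all such $y_i$, hence contains $\phi(x)$.

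\textbf{Step 2: successive refinement.} We build continuous maps $\phi_n$ with
\[
 \dist(\phi_n(x),\Theta(x))<2^{-n},
 \qquad
 \abs{\phi_{n+1}(x)-\phi_n(x)}<2^{-n+1}.
\]
Take $\phi_1$ from Step~1 applied to $\Theta$ with $\varepsilon=1/2$. Given $\phi_n$, define $\Phi_{n+1}(x):=\Theta(x)\cap B(\phi_n(x),2^{-n})$. This set is nonempty by the inductive hypothesis and convex as an intersection of convex sets. The key verification is that $\Phi_{n+1}$ is again lower semicontinuous. For this, fix $x_0$, an open $U$, and a point $y_0\in\Phi_{n+1}(x_0)\cap U$. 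Choose $\delta>0$ with $B(y_0,2\delta)\subset U$ and $\abs{y_0-\phi_n(x_0)}<2^{-n}-2\delta$. For $x$ near $x_0$, lower semicontinuity of $\Theta$ gives a point $y\in\Theta(x)\cap B(y_0,\delta)$, and continuity of $\phi_n$ gives $\abs{\phi_n(x)-\phi_n(x_0)}<\delta$. Then $y\in\Phi_{n+1}(x)\cap U$. Applying Step~1 to $\Phi_{n+1}$ with $\varepsilon=2^{-n-1}$ yields $\phi_{n+1}$ with $\dist(\phi_{n+1}(x),\Theta(x))<2^{-n-1}$ and $\abs{\phi_{n+1}(x)-\phi_n(x)}<2^{-n}+2^{-n-1}<2^{-n+1}$.

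\textbf{Step 3: limit.} The sequence $(\phi_n)$ is uniformly Cauchy, and $Y$ is complete, so it converges uniformly to a continuous map $a$. Since $\dist(a(x),\Theta(x))=0$ and $\Theta(x)$ is closed, we get $a(x)\in\Theta(x)$.

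The main obstacle is the lower semicontinuity of the truncated map $\Phi_{n+1}$ in Step~2. This is why the refinement uses open balls and keeps a strict margin $2\delta$; with closed balls the argument could fail. Completeness of $Y$ and closedness of the values are used only in Step~3.
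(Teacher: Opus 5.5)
Your proof is correct. It is Michael's original argument: approximate selections from a partition of unity, truncation by open balls to keep lower semicontinuity, and a uniformly Cauchy refinement. Each step holds, including the triangle-inequality check that $\Phi_{n+1}$ is lower semicontinuous.

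The paper gives no proof of Theorem~\ref{thm:Michael}; it cites Michael's paper. The only place it touches the argument is Remark~\ref{rem:partition-of-unity}, which uses just the analogue of your Step~1. There the structure of Lemma~\ref{lem:finite-cont} replaces your Steps~2--3. Each $z$ comes with a point $y^*(z)$ satisfying the finitely many constraints with margin $L_c$, hence strictly on a whole open neighbourhood $W_z$. The convex combination $\sum_\gamma\varphi_\gamma(x)y^*(z_\gamma)$ therefore lies exactly in $\Theta_N(x)$, not merely within $\varepsilon$ of it. So no iteration, completeness of $Y$, or closedness of the fibres is needed.

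Your route proves the full theorem for arbitrary lower semicontinuous closed-convex-valued maps, where no such uniform interior point exists. The paper's shortcut gives an elementary, self-contained construction in the one situation it needs.

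You should make one convention explicit. Partitions of unity subordinate to every open cover exist only if paracompact spaces are taken to be Hausdorff, as in Michael and Engelking. Without that convention the statement fails: an infinite set with the cofinite topology is compact but admits only constant continuous maps into $Y$. In the paper's application $X=\Bb$ is metric, so this does not arise there.
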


This is \cite[Theorem~3.2'']{MichaelI}; see also \cite{MichaelII}.  We
shall apply it with $X=Y=\Bb$.  This is legitimate because every metric
space is paracompact \cite{Engelking}.

The mechanism behind the next finite-in-time lemma is
similar to arguments used in parametric optimization.  Strict feasibility is a
Slater-type condition: convexity allows one to approximate an arbitrary
feasible point by strictly feasible points, and continuity then preserves
the finitely many strict inequalities under a small change of the parameter.
See \cite{Hogan} for point-to-set maps determined by inequalities and
\cite{DinhGobernaLopez} for the broader feasible-set stability theory in
locally convex spaces.  We have not found a reference for exactly the
formulation below, where the parameter space is merely topological and the
decision and range spaces are Banach spaces, so we include the short proof.

\begin{lem}
\label{lem:strict}
Let $X$ be a topological space, let $Y,Z$ be Banach spaces, let
$N\in\mathbb N$, and let
$V_k:X\times Y\longrightarrow Z$, $k=0,\ldots,N$,
be continuous maps such that $y\mapsto V_k(x,y)$ is affine for every
$x\in X$.  Let $M>0$ and $\alpha>0$.  Assume that, for every $x\in X$,
there exists $y^*=y^*(x)\in Y$ satisfying
\begin{equation}
\label{eq:strict-feasible}
 \norm{V_k(x,y^*)}\leq M-\alpha,
 \qquad k=0,\ldots,N.
\end{equation}
We refer to \eqref{eq:strict-feasible} as a \emph{uniform Slater-type
condition}: one point satisfies every constraint with the same positive
margin $\alpha$.
Define
\begin{equation*}
 \Theta(x):=
 \{y\in Y:\norm{V_k(x,y)}\leq M
       \text{ for }k=0,\ldots,N\}.
\end{equation*}
Then every $\Theta(x)$ is nonempty, closed and convex, and the map
$\Theta:X\to2^Y$ is lower semicontinuous.
\end{lem}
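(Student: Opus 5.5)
Nonemptiness, closedness and convexity are immediate, so the real work is lower semicontinuity. Nonemptiness follows because $y^*(x)\in\Theta(x)$ by \eqref{eq:strict-feasible}. Closedness holds because each $y\mapsto V_k(x,y)$ is continuous, so $\Theta(x)$ is a finite intersection of preimages of the closed ball of radius $M$. Convexity holds because each $y\mapsto V_k(x,y)$ is affine, so each constraint set is the preimage of a convex ball under an affine map, and intersections of convex sets are convex.

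For lower semicontinuity, fix an open $U\subset Y$ and $x_0$ with some $y_0\in\Theta(x_0)\cap U$. The plan is first to move $y_0$ slightly toward the strictly feasible point $y^*=y^*(x_0)$ so that it becomes strictly feasible while staying in $U$. Put $y_t:=(1-t)y_0+ty^*$ for $t\in(0,1]$. Affinity in $y$ gives
\[
 V_k(x_0,y_t)=(1-t)V_k(x_0,y_0)+tV_k(x_0,y^*),
\]
hence $\norm{V_k(x_0,y_t)}\leq(1-t)M+t(M-\alpha)=M-t\alpha<M$ for every $k$. Since $U$ is open and $y_t\to y_0$ as $t\to0$, we can fix $t>0$ small enough that $y_t\in U$. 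Call this point $\bar y$; it lies in $U$ and satisfies every constraint at $x_0$ with margin $t\alpha>0$.

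Next, keep $\bar y$ fixed and vary $x$. For each of the finitely many indices $k=0,\dots,N$, the map $x\mapsto V_k(x,\bar y)$ is continuous (restrict joint continuity to the slice $y=\bar y$). So there is an open neighbourhood $W_k$ of $x_0$ on which $\norm{V_k(x,\bar y)}<M$. The set $W:=\bigcap_{k=0}^N W_k$ is an open neighbourhood of $x_0$; this uses finiteness of $N$. For every $x\in W$ we have $\bar y\in\Theta(x)\cap U$. Hence $\{x:\Theta(x)\cap U\neq\varnothing\}$ contains a neighbourhood of each of its points and is open.

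The only delicate point is the convexity step producing a strictly feasible point inside $U$. A boundary point $y_0$ with $\norm{V_k(x_0,y_0)}=M$ could leave $\Theta(x)$ under arbitrarily small perturbations of $x$, so some perturbation of $y_0$ is unavoidable. The Slater-type condition is exactly what lets the segment toward $y^*$ supply that perturbation. Uniformity of $\alpha$ in $x$ is not needed here, since only the margin at $x_0$ is used. No metric or paracompactness on $X$ is needed either, only continuity of the finitely many maps $V_k$.
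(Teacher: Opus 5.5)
Your proposal is correct and follows essentially the same route as the paper. You move along the segment from $y_0$ toward the Slater point $y^*(x_0)$ to get a strictly feasible point still in $U$, then use continuity of the finitely many maps $x\mapsto V_k(x,\bar y)$ to find a common neighbourhood of $x_0$. Your observations that only the margin at $x_0$ is used and that no paracompactness of $X$ is needed are also accurate for the paper's argument.
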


\begin{proof}
Nonemptiness follows from \eqref{eq:strict-feasible}.  For a fixed $x$,
each set
$\{y\in Y:\norm{V_k(x,y)}\leq M\}$
is closed because $V_k(x,\cdot)$ is continuous, and it is convex because
$V_k(x,\cdot)$ is affine and the closed ball of radius $M$ in $Z$ is
convex.  Their finite intersection $\Theta(x)$ is therefore closed and
convex.

It remains to prove lower semicontinuity.  Fix $x_0\in X$ and an open set
$U\subset Y$ such that $\Theta(x_0)\cap U\neq\varnothing$.  Choose
$y_0\in\Theta(x_0)\cap U$.  By the uniform Slater-type condition,
there is a strictly feasible point $y^*$ at $x_0$ satisfying
\eqref{eq:strict-feasible}.  For
$t\in(0,1)$ put
$y_t:=ty_0+(1-t)y^*$.
For $t$ sufficiently close to $1$, one has $y_t\in U$.  By affinity,
\[
 V_k(x_0,y_t)
 =tV_k(x_0,y_0)+(1-t)V_k(x_0,y^*),
\]
and hence
\begin{align*}
 \norm{V_k(x_0,y_t)}
 \leq tM+(1-t)(M-\alpha)
 =M-(1-t)\alpha<M,
 \qquad k=0,\ldots,N.
\end{align*}
Thus all the defining inequalities are strict at $(x_0,y_t)$.  Since
there are only finitely many of them and all $V_k$ are continuous, there
exists a neighborhood $W$ of $x_0$ such that
\[
 \norm{V_k(x,y_t)}<M,
 \qquad x\in W,
 \quad k=0,\ldots,N.
\]
Consequently $y_t\in\Theta(x)\cap U$ for every $x\in W$.  By
Definition~\ref{def:lsc}, $\Theta$ is lower semicontinuous.
\end{proof}

\begin{remark}
The finiteness of the horizon is essential in this argument: continuity
provides a common neighborhood for finitely many strict inequalities.  The
same proof does not directly give lower semicontinuity for a fiber defined
by infinitely many inequalities.
\end{remark}
The following nonlinear lemma is a special quantitative form of the
Lyusternik--Graves argument \cite{Lyusternik, Graves}.  
We include the short construction since it uses only quantitative bounded
solvability of the linear equation.  
The lemma will be applied to both semiconjugacy equations below.
\begin{lem}
\label{lem:successive-corrections}
Let $\mathcal X,\mathcal Y$ be Banach spaces and let
$\mathcal L:\mathcal X\to\mathcal Y$ be a bounded linear operator.  Assume
that there exists $K>0$ such that, for every $w\in\mathcal Y$, one can choose
$q\in\mathcal X$ satisfying
\begin{equation}
\label{eq:abstract-preimage}
 \mathcal Lq=w,
 \qquad
 \norm q\leq K\norm w.
\end{equation}
Let $\rho>0$, and let
$\mathcal N:\{u\in\mathcal X:\norm u\leq\rho\}\longrightarrow\mathcal Y$
be Lipschitz with constant $\mu$.  If
\begin{equation}
\label{eq:successive-smallness}
 \theta:=K\mu<1,
 \qquad
 \frac{K\norm{\mathcal N(0)}}{1-\theta}\leq\rho,
\end{equation}
then there exists $u\in\mathcal X$ such that
 $\mathcal Lu=\mathcal N(u)$,
 $\norm u\leq \frac{K\norm{\mathcal N(0)}}{1-\theta}\leq\rho$.
No linear or continuous choice of the preimages in
\eqref{eq:abstract-preimage} is required.
\end{lem}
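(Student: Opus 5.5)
We will prove Lemma~\ref{lem:successive-corrections} by an iteration of Newton--Picard type, choosing preimages one step at a time, so that no linear or continuous right inverse is ever used. We set $u_0:=0$. Given $u_n$ with $\norm{u_n}\leq\rho$, we use \eqref{eq:abstract-preimage} to choose $q_n\in\mathcal X$ with
\[
 \mathcal Lq_n=\mathcal N(u_n)-\mathcal Lu_n,
 \qquad
 \norm{q_n}\leq K\norm{\mathcal N(u_n)-\mathcal Lu_n},
\]
and we put $u_{n+1}:=u_n+q_n$. Then $\mathcal Lu_{n+1}=\mathcal N(u_n)$ for every $n\geq0$. In particular, the residual at step $n+1$ is
\[
 \mathcal N(u_{n+1})-\mathcal Lu_{n+1}=\mathcal N(u_{n+1})-\mathcal N(u_n),
\]
whose norm is at most $\mu\norm{q_n}$.

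The key estimate is the geometric decay $\norm{q_{n+1}}\leq K\mu\norm{q_n}=\theta\norm{q_n}$, together with the initial bound $\norm{q_0}\leq K\norm{\mathcal N(0)}$, since $u_0=0$. By induction this gives $\norm{q_n}\leq\theta^nK\norm{\mathcal N(0)}$. Consequently
\[
 \norm{u_n}\leq\sum_{j<n}\norm{q_j}\leq\frac{K\norm{\mathcal N(0)}}{1-\theta}\leq\rho
\]
by \eqref{eq:successive-smallness}. This keeps every iterate inside the domain of $\mathcal N$, and that is exactly what makes the induction legitimate, so we must verify the ball condition and the decay bound together in a single induction on $n$.

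Since $\sum_n\norm{q_n}<\infty$, the sequence $(u_n)$ is Cauchy, and it converges to some $u$ in the closed ball of radius $K\norm{\mathcal N(0)}/(1-\theta)$. Passing to the limit in $\mathcal Lu_{n+1}=\mathcal N(u_n)$, using the boundedness of $\mathcal L$ and the Lipschitz continuity of $\mathcal N$, gives $\mathcal Lu=\mathcal N(u)$ together with the stated bound. The only delicate point is the simultaneous induction just described; the rest is routine. We also note that the choices of $q_n$ are completely arbitrary beyond \eqref{eq:abstract-preimage}, which justifies the final sentence of the lemma.
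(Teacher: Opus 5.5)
Your proof is correct and uses the same iteration as the paper: since $\mathcal Lu_n=\mathcal N(u_{n-1})$ for $n\geq1$, your forcing $\mathcal N(u_n)-\mathcal Lu_n$ is exactly the paper's correction $\mathcal N(u_n)-\mathcal N(u_{n-1})$, and your argument then proceeds as the paper's does (a simultaneous induction on the geometric decay of $\norm{q_n}$ and the ball condition, then passage to the limit). Writing the forcing as the current residual only lets you treat the step $n=0$ in the same way as the later steps.
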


\begin{proof}
Set $u_0:=0$.  Choose $q_0\in\mathcal X$ according to
\eqref{eq:abstract-preimage} with $\mathcal Lq_0=\mathcal N(u_0)$,
and put $u_1:=q_0$.  For $j\geq1$, once $u_{j-1}$ and $u_j$ have
been defined and lie in the ball of radius $\rho$, choose
$q_j\in\mathcal X$ according to \eqref{eq:abstract-preimage} with
\begin{equation}
\label{eq:abstract-correction}
 \mathcal Lq_j=\mathcal N(u_j)-\mathcal N(u_{j-1}),
\end{equation}
and put $u_{j+1}:=u_j+q_j$.
We now show that the construction continues for all $j\geq0$ and that
\begin{equation}
\label{eq:abstract-geometric}
 \norm{q_j}\leq K\norm{\mathcal N(0)}\theta^j,
 \qquad
 \norm{u_{j+1}}
 \leq K\norm{\mathcal N(0)}\sum_{i=0}^{j}\theta^i
 \leq\rho.
\end{equation}
The case $j=0$ follows from the choice of $q_0$ and
\eqref{eq:successive-smallness}.  Suppose the estimates hold through
$j-1$.  Then $u_{j-1}$ and $u_j$ lie in the ball of radius $\rho$,
so $q_j$ and $u_{j+1}$ can be defined.  By the choice of $q_j$,
the Lipschitz estimate, and $u_j-u_{j-1}=q_{j-1}$,
\[
 \norm{q_j}
 \leq K\norm{\mathcal N(u_j)-\mathcal N(u_{j-1})}
 \leq K\mu\norm{q_{j-1}}
 \leq K\norm{\mathcal N(0)}\theta^j.
\]
Since $u_{j+1}=\sum_{i=0}^{j}q_i$, summing these bounds and using
\eqref{eq:successive-smallness} proves the remaining inequalities in
\eqref{eq:abstract-geometric}.  Thus every step is well-defined.
Moreover, induction using \eqref{eq:abstract-correction} gives
\begin{equation}
\label{eq:abstract-approximation}
 \mathcal Lu_{j+1}=\mathcal N(u_j),
 \qquad j\geq0.
\end{equation}

The series $\sum_{j\geq0}q_j$
converges to some $u\in\mathcal X$, and
$
 \norm u\leq\frac{K\norm{\mathcal N(0)}}{1-\theta}\leq\rho.
$
Since $\mathcal L$ is bounded and $\mathcal N$ is Lipschitz, passing to
the limit in \eqref{eq:abstract-approximation} gives
$\mathcal Lu=\mathcal N(u)$.
\end{proof}

\section{Cocycles}
\label{sec:linear-input}

\subsection{Orbitwise estimates and shadowing for the induced operator}
\label{sec:finite-cont}

Recall the standing assumptions on the cocycle $(\alpha,A)$ and
Definition~\ref{def:CL-cocycle} from
Subsection~\ref{sec:induced-shadowing-results}.  We begin with the
orbitwise estimate underlying all constructions.

\begin{lem}
\label{lem:cocycle-SBS}
Assume that $(\alpha,A)$ satisfies cocycle generalized hyperbolicity with
constants $C>0$ and $\lambda\in(0,1)$.  There exists
$L_c=L_c(C,\lambda)>0$ such that, for every $x\in\Bb$, every finite or
infinite integer interval $I$, and every bounded sequence
$(w_{k+1})_{k\in\widetilde I}$, where
$\widetilde I:=\{k\in\Zz:k,k+1\in I\}$, there exists a sequence
$(v_k)_{k\in I}$ satisfying $ \sup_{k\in I}\abs{v_k}
 \leq L_c\sup_{k\in\widetilde I}\abs{w_{k+1}}$ and
\[
 v_{k+1}=A(\alpha^k(x))v_k+w_{k+1},
 \qquad k\in\widetilde I.
\]
\end{lem}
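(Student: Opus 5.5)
The plan is the standard Perron/Green-function construction along the single orbit $x_k:=\alpha^k(x)$, using only the orbitwise data (C-GH1)--(C-GH3) at the points $x_k$. No continuity in $x$ is needed here, since the lemma is pointwise in $x$. Write $A_k:=A(x_k)$, $P_k:=P_{x_k}$, $Q_k:=Q_{x_k}$, and $\Phi(k,j):=A_{k-1}\cdots A_j$ for $k\geq j$, with its inverse for $k<j$. By (C-GH2) and (C-GH3), for $j\leq k$ we have $\Phi(k,j)E^s_{x_j}\subseteq E^s_{x_k}$ and $\abs{\Phi(k,j)P_j w}\leq C^2\lambda^{k-j}\abs w$. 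Symmetrically, for $j>k$ we have $\Phi(k,j)E^u_{x_j}\subseteq E^u_{x_k}$ and $\abs{\Phi(k,j)Q_j w}\leq C^2\lambda^{j-k}\abs w$. Here we use (C-GH1) to bound $\norm{P_j},\norm{Q_j}\leq C$.

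First I treat the case where $I$ is bounded below, so $I=[a,b]$ or $[a,\infty)$. I define
\[
 v_k:=\sum_{j\in\widetilde I,\ j+1\leq k}\Phi(k,j+1)P_{j+1}w_{j+1}
 -\sum_{j\in\widetilde I,\ j+1>k}\Phi(k,j+1)Q_{j+1}w_{j+1},\qquad k\in I.
\]
Both sums converge absolutely, with total bound $C^2\sum_{n\geq0}\lambda^n+C^2\sum_{n\geq1}\lambda^n=C^2\frac{1+\lambda}{1-\lambda}$ times $\sup\abs{w}$. This gives $L_c=C^2(1+\lambda)/(1-\lambda)$.

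To verify the recurrence, note that $A_k\Phi(k,j+1)=\Phi(k+1,j+1)$ for all $j$, where the unstable terms use invertibility of $A_k$. Hence $v_{k+1}-A_kv_k$ reduces to the single term with $j=k$, which moves from the unstable sum into the stable sum. That term equals $P_{k+1}w_{k+1}+Q_{k+1}w_{k+1}=w_{k+1}$. The same formula works verbatim when $I$ is a finite interval, a half-line, or $\Zz$, because the index $j$ ranges only over $\widetilde I$. So in fact no case split is needed; the single formula covers all types of $I$.

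The only delicate points are bookkeeping. First, the composition $\Phi(k,j+1)Q_{j+1}$ with $j+1>k$ is a backward product, so (C-GH3) must be applied at the base point $x_{j+1}$ with $n=j+1-k$. That is legitimate because $\alpha^{-n}(x_{j+1})=x_k$, and $A_\alpha^{-n}(x_{j+1})=\Phi(k,j+1)$ by definition. Second, I would double-check the consistency of the $A_\alpha^{-n}$ convention, namely that $A_\alpha^{-n}(\alpha^n y)=(A_\alpha^n(y))^{-1}$. With those checked, the bound is uniform in $x$ and $I$, as required.
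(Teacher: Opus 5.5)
Your proposal is correct and is essentially the paper's proof: the same Perron sums along the orbit $x_k=\alpha^k(x)$, with a general interval $I$ handled by summing only over indices coming from $\widetilde I$, which is exactly the paper's device of extending the forcing by zero and restricting. The only difference is cosmetic: you bound the diagonal term $|P_k w_k|$ by $C^2|w_k|$ rather than $C|w_k|$ (legitimate, since a nonzero projection has norm at least $1$, so $C\geq1$), which gives $L_c=C^2(1+\lambda)/(1-\lambda)$ in place of the paper's $C+2C^2\lambda/(1-\lambda)$.
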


\begin{remark}
The bounded-solvability approach goes back to Perron \cite{Perron}
and the classical work of Maizel and Pliss
\cite{Maizel,PlissBounded}; see Coppel \cite{Coppel} for a broader
treatment. For nonautonomous evolution equations,
the relation between exponential dichotomy and bounded mild
solutions is studied in \cite{LatushkinRandolphSchnaubelt}.
The proof below uses the Perron construction in
\cite[proof of Lemma~31]{TikhomirovGH}.
\end{remark}      

\begin{proof}
Fix $x\in\Bb$ and first take $I=\Zz$.  Put $x_j:=\alpha^j(x)$ and
define $v_k$ by the Perron sums
\[
 v_k:=
 \sum_{j\leq k}A_\alpha^{k-j}(x_j)P_{x_j}w_j
 -
 \sum_{j>k}A_\alpha^{k-j}(x_j)Q_{x_j}w_j,
 \qquad k\in\Zz.
\]
The convergence of these sums and the identity
$v_{k+1}=A(x_k)v_k+w_{k+1}$ follow exactly as in
\cite[Lemma~31]{TikhomirovGH}, with derivative iterates
replaced by cocycle iterates.  The projection bounds and exponential
estimates give
\[
 \sup_{k\in\Zz}\abs{v_k}
 \leq\left(C+2C^2\sum_{n=1}^{\infty}\lambda^n\right)
       \sup_{j\in\Zz}\abs{w_j}
 =\left(C+\frac{2C^2\lambda}{1-\lambda}\right)
       \sup_{j\in\Zz}\abs{w_j}.
\]
Thus we may take $L_c:=C+2C^2\lambda/(1-\lambda)$.

For a general interval $I$, extend the forcing by setting
$w_{k+1}:=0$ for $k\notin\widetilde I$ and restrict the resulting
solution to $I$.  Neither operation increases the supremum norm,
so the same constant works independently of $x$ and $I$.
\end{proof}

Recall that
$X=C_b^0(\Bb,\Bb)$, with norm $\norm v_\infty:=\sup_{x\in\Bb}\abs{v(x)}$,
and recall the induced operator
$T_{\alpha,A}$
defined by \eqref{eq:TalphaA}.
It is a bounded linear isomorphism, with
$
 (T_{\alpha,A}^{-1}v)(x)=A(x)^{-1}v(\alpha(x)).
$

If $x\mapsto P_x,Q_x$ were continuous, their pointwise action on $X$
would define bounded complementary projections and a generalized-hyperbolic
splitting for $T_{\alpha,A}$.  The invariance conditions would still be
inclusions, so this would not imply ordinary hyperbolicity \cite{TikhomirovGH}. Without continuity assumptions on the splitting, these pointwise projections need not preserve $X$.  We instead prove the
required finite-horizon property directly.

\begin{lem}
\label{lem:finite-cont}
Assume that $(\alpha,A)$ satisfies cocycle generalized hyperbolicity with
constants $C,\lambda$, and let $L_c$ be as in
Lemma~\ref{lem:cocycle-SBS}.  For every integer $N\geq1$ and every
collection
$
 w_1,\ldots,w_N\in X
$
there exist $v_0,\ldots,v_N\in X$ such that $ \max_{0\leq k\leq N}\norm{v_k}_\infty
 \leq 2L_c\max_{1\leq j\leq N}\norm{w_j}_\infty$ and
\begin{equation*}
 v_{k+1}=T_{\alpha,A}v_k+w_{k+1},
 \qquad k=0,\ldots,N-1.
\end{equation*}

The constant $L_c$ is independent of $N$ and of the functions $w_j$.
\end{lem}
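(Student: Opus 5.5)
The plan is to reduce the problem to a choice of a single initial vector at each base point, and then select that vector continuously. The selection will use Lemma~\ref{lem:strict} together with Michael's theorem (Theorem~\ref{thm:Michael}). Put $W:=\max_{1\leq j\leq N}\norm{w_j}_\infty$. If $W=0$, take all $v_k=0$, so assume $W>0$.

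Unwinding \eqref{eq:TalphaA}, the recurrence $v_{k+1}=T_{\alpha,A}v_k+w_{k+1}$ reads
\[
 v_{k+1}(\alpha^{k+1}(x))=A(\alpha^k(x))\,v_k(\alpha^k(x))+w_{k+1}(\alpha^{k+1}(x)),
 \qquad x\in\Bb .
\]
Thus, once $y=v_0(x)$ is prescribed, the values along the orbit are forced to be $v_k(\alpha^k(x))=V_k(x,y)$, where
\[
 V_k(x,y):=A_\alpha^k(x)\,y+\sum_{j=1}^{k}A_\alpha^{k-j}(\alpha^j(x))\,w_j(\alpha^j(x)),
 \qquad k=0,\ldots,N,
\]
and in particular $V_0(x,y)=y$.

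Each $V_k$ is affine in $y$. It is also jointly continuous on $\Bb\times\Bb$, because $\alpha$ is a homeomorphism, $x\mapsto A(x)$ is continuous in operator norm, the $w_j$ are continuous, and only finitely many compositions and products of these maps occur.

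Set $M:=2L_cW$ and define $\Theta(x):=\{y:\abs{V_k(x,y)}\leq M,\ k=0,\ldots,N\}$. To verify the uniform Slater-type condition \eqref{eq:strict-feasible}, fix $x$ and apply Lemma~\ref{lem:cocycle-SBS} on the interval $I=\{0,\ldots,N\}$ with forcing $w_j(\alpha^j(x))$, $1\leq j\leq N$. This produces a finite orbit $(v_k^*)$ with $\sup_k\abs{v_k^*}\leq L_cW$ solving $v^*_{k+1}=A(\alpha^k(x))v_k^*+w_{k+1}(\alpha^{k+1}(x))$. By induction $v_k^*=V_k(x,v_0^*)$, so the point $y^*:=v_0^*$ satisfies $\abs{V_k(x,y^*)}\leq M-\alpha$ with margin $\alpha:=L_cW>0$, independent of $x$. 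This uniform margin, which is where the factor $2$ in the bound comes from, is the key point.

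Lemma~\ref{lem:strict} (with parameter space $X=\Bb$ and $Y=Z=\Bb$) then gives that $\Theta$ is lower semicontinuous with nonempty closed convex values. Since $\Bb$ is metric and hence paracompact, Theorem~\ref{thm:Michael} provides a continuous selection $a:\Bb\to\Bb$ with $a(x)\in\Theta(x)$ for every $x$.

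Finally define
\[
 v_k(z):=V_k\bigl(\alpha^{-k}(z),\,a(\alpha^{-k}(z))\bigr),
 \qquad z\in\Bb,\ k=0,\ldots,N .
\]
Each $v_k$ is continuous, being a composition of continuous maps, and it is bounded by $M=2L_cW$ because $a(x)\in\Theta(x)$, so $v_k\in X$. Moreover $V_{k+1}(x,y)=A(\alpha^k(x))V_k(x,y)+w_{k+1}(\alpha^{k+1}(x))$, which is exactly the unwound recurrence, so $v_{k+1}=T_{\alpha,A}v_k+w_{k+1}$. The constant $2L_c$ depends only on $C,\lambda$.

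The only delicate step is lower semicontinuity of $\Theta$. It is secured by the finiteness of the horizon and by the uniform strict margin obtained from the factor $2$, which is precisely the setting of Lemma~\ref{lem:strict}. No continuity of the splitting $x\mapsto P_x,Q_x$ is used: the splitting enters only through the orbitwise existence of $y^*$, not through any formula for the selection.
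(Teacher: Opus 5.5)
Your proposal is correct and follows the paper's proof essentially step for step. The common ingredients are the affine orbit maps $V_k$, which you write in closed form where the paper defines them recursively, and the fiber $\Theta$ of radius $2L_c\max_j\norm{w_j}_\infty$ with the uniform Slater margin supplied by Lemma~\ref{lem:cocycle-SBS} on $I=\{0,\ldots,N\}$. The proof then uses Lemma~\ref{lem:strict} together with Michael's theorem for the selection, and the transport $v_k(\alpha^k(x)):=V_k(x,a(x))$; the only cosmetic differences are that you skip the rescaling to $\max_j\norm{w_j}_\infty=1$, and that your use of $\alpha$ for the Slater margin clashes with the base homeomorphism and is worth renaming.
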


\begin{proof}
Fix $N\geq1$ and $w_1,\ldots,w_N\in X$.  Put
$
 m:=\max_{1\leq j\leq N}\norm{w_j}_\infty.
$
The case $m=0$ is immediate, so after rescaling assume that $m=1$.
For $x,y\in\Bb$, define
$
 V_0(x,y):=y
$
and, recursively,
\begin{equation}
\label{eq:cocycle-Vk}
 V_{k+1}(x,y)
 :=A(\alpha^k(x))V_k(x,y)
   +w_{k+1}(\alpha^{k+1}(x)),
 \qquad k=0,\ldots,N-1.
\end{equation}
Every $V_k:\Bb\times\Bb\to\Bb$ is continuous and is affine in the second
variable.
For a fixed $x$, Lemma~\ref{lem:cocycle-SBS}, applied with
$I=\{0,\ldots,N\}$, provides an initial vector $y^*(x)$ such that
\[
 \abs{V_k(x,y^*(x))}\leq L_c,
 \qquad k=0,\ldots,N.
\]
Define
\[
 \Theta_N(x):=
 \{y\in\Bb:\abs{V_k(x,y)}\leq2L_c
        \text{ \; for \; }k=0,\ldots,N\}.
\]
The preceding estimate gives strict feasibility with margin $L_c$.
Lemma~\ref{lem:strict} and Theorem~\ref{thm:Michael} therefore give a
continuous selection
$a_N:\Bb\longrightarrow\Bb$ with $a_N(x)\in\Theta_N(x)$.
For $k=0,\ldots,N$, define $v_k\in X$ by
 $v_k(\alpha^k(x)):=V_k(x,a_N(x))$.
This is well-defined and continuous because $\alpha$ is a homeomorphism,
and
$
 \max_{0\leq k\leq N}\norm{v_k}_\infty\leq2L_c.
$
Equation \eqref{eq:cocycle-Vk} gives
\[
 v_{k+1}=T_{\alpha,A}v_k+w_{k+1},
 \qquad k=0,\ldots,N-1.
\]
Undoing the normalization gives the finite-horizon bound with right-hand
side $2L_cm$.
\end{proof}

\begin{remark}
\label{rem:partition-of-unity}
The continuous selection $a_N$ can also be obtained directly from a
partition of unity, without Theorem~\ref{thm:Michael}.  For $z\in\Bb$, the set
\[
 W_z:=\{x\in\Bb:\abs{V_k(x,y^*(z))}<2L_c\text{ for }k=0,\ldots,N\}
\]
is open, because only finitely many continuous maps are involved, and it
contains $z$, since $\abs{V_k(z,y^*(z))}\leq L_c$.  Since $\Bb$ is a metric
space, hence paracompact, there is a partition of unity \cite{Engelking}
$\{\varphi_\gamma\}_{\gamma\in\Gamma}$ subordinate to the open cover
$\{W_z\}_{z\in\Bb}$: the supports of the functions $\varphi_\gamma$ form a
locally finite family, and
$\operatorname{supp}\varphi_\gamma\subset W_{z_\gamma}$ for some
$z_\gamma\in\Bb$.  The index set $\Gamma$ is arbitrary and may be
uncountable.  Put
\[
 a_N(x):=\sum_{\gamma\in\Gamma}\varphi_\gamma(x)\,y^*(z_\gamma),
 \qquad x\in\Bb.
\]
Near each point only finitely many terms of the sum are nonzero, so $a_N$
is well defined and continuous.  If $\varphi_\gamma(x)\neq0$, then
$x\in W_{z_\gamma}$.  Since every $V_k(x,\cdot)$ is affine,
\[
 \abs{V_k(x,a_N(x))}
 =\abs{\sum_{\gamma\in\Gamma}\varphi_\gamma(x)V_k(x,y^*(z_\gamma))}
 <2L_c,
 \qquad k=0,\ldots,N,
\]
and hence $a_N(x)\in\Theta_N(x)$.  
This argument is similar to the first step of the proof of
Theorem~\ref{thm:Michael}, in which a partition of unity gives approximate
selections.
\end{remark}

\begin{proof}[Proof of Theorem~\ref{thm:induced-shadowing}]
Lemma~\ref{lem:finite-cont}, applied to the errors of a finite
pseudotrajectory, gives a correction whose norm is at most $2L_c$ times the
error bound and whose subtraction leaves an exact trajectory.
Thus $T_{\alpha,A}$ has finite Lipschitz shadowing with constant
$2L_c$.  Since it is a bounded linear isomorphism of the Banach space
$X$, Lemma~\ref{lem:finite-infinite-shadowing} gives the claimed
two-sided Lipschitz shadowing estimate with $L_T=4L_c$.
\end{proof}

\subsection{Cohomological equations for cocycles}
\label{sec:cohomological}

For a $C^1$-diffeomorphism $F:M\to M$ of a smooth closed manifold,
the Mather operator \cite{Mather} acts on the space
$C^0(M,TM)$ of continuous vector fields by
$
 (F_*v)(F(x))=DF(x)v(x).
$
Linearizing the conjugacy equation leads to $I-F_*$; its surjectivity
on continuous vector fields is called \emph{infinitesimal stability}
\cite{Robbin,Robinson,ManeInfinitesimal,ManeAS}.
The role of analytic strong transversality and shadowing in
Ma\~n\'e's theorem is discussed in \cite{PilyuginMane}.
Related regularity questions for Anosov perturbations and Liv\v{s}ic
cohomological equations are studied in \cite{LlaveMarcoMoriyon}.

For a cocycle $(\alpha,A)$ satisfying the standing assumptions of
Subsection~\ref{sec:induced-shadowing-results}, define on $X=C_b^0(\Bb,\Bb)$ the bounded
linear cohomological operator
\begin{equation*}
 \mathcal D_{\alpha,A}:X\longrightarrow X,
 \qquad
 (\mathcal D_{\alpha,A}v)(x):=v(\alpha(x))-A(x)v(x).
\end{equation*}
By the definition of $T_{\alpha,A}$,
\begin{equation}
\label{eq:cocycle-D-T}
 (I-T_{\alpha,A})v
 =(\mathcal D_{\alpha,A}v)\circ\alpha^{-1}.
\end{equation}
We now prove bounded continuous solvability of
$\mathcal D_{\alpha,A}v=w$ with a uniform norm bound, even when the
cocycle splitting is discontinuous (Theorem~\ref{thm:cocycle-D-surj}).

We begin with an abstract Birkhoff-averaging lemma.  It avoids any use of
compactness or passage to a convergent subsequence of finite-horizon
solutions. For a related argument, see \cite{MessaoudiEtal}.

\begin{lem}[Finite-horizon Birkhoff error halving]
\label{lem:Birkhoff}
Let $X$ be a Banach space and let $T:X\to X$ be a bounded linear operator.
Assume that there exists $M_0>0$ such that, for every $w\in X$ and every
integer $N\geq1$, one can find $v_0,\ldots,v_N\in X$ satisfying
\begin{equation}
\label{eq:abstract-rec}
 v_{k+1}=Tv_k+w,
 \qquad k=0,\ldots,N-1,
\end{equation}
and
$
 \max_{0\leq k\leq N}\norm{v_k}
 \leq M_0\norm w.
$
Then, for every $w\in X$, there exists $u\in X$ such that
\begin{equation*}
 u-Tu=w,
 \qquad
 \norm u\leq2M_0\norm w.
\end{equation*}
No linear or continuous dependence of $u$ on $w$ is asserted.
\end{lem}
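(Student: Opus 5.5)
The plan is to build an approximate solution by Birkhoff averaging of a finite-horizon solution, show the residual is at most half the forcing, and then iterate this "error halving" to get an exact solution through a convergent series.

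\textbf{Step 1: one averaging pass.} Fix $w\in X$ and an integer $N\geq1$, and take $v_0,\ldots,v_N$ from the hypothesis. Define the average
\[
 \bar v:=\frac1N\sum_{k=0}^{N-1}v_k .
\]
The recurrence \eqref{eq:abstract-rec} gives $Tv_k=v_{k+1}-w$, hence
\[
 \bar v-T\bar v=\frac1N\sum_{k=0}^{N-1}(v_k-v_{k+1}+w)=w+\frac{v_0-v_N}{N}.
\]
So the residual is $r:=w-(\bar v-T\bar v)=(v_N-v_0)/N$. The norm bounds give
\[
 \norm{\bar v}\leq M_0\norm w,
 \qquad
 \norm r\leq \frac{2M_0}{N}\norm w .
\]
Choose $N\geq4M_0$, so that $\norm r\leq\frac12\norm w$. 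This choice of $N$ depends only on $M_0$, not on $w$.

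\textbf{Step 2: iteration.} Set $w^{(0)}:=w$. Given $w^{(j)}$, apply Step 1 to get $u_j$ and $w^{(j+1)}$ with
\[
 u_j-Tu_j=w^{(j)}-w^{(j+1)},
 \qquad
 \norm{u_j}\leq M_0\norm{w^{(j)}},
 \qquad
 \norm{w^{(j+1)}}\leq\tfrac12\norm{w^{(j)}}.
\]
Then $\norm{w^{(j)}}\leq2^{-j}\norm w$, so the series $u:=\sum_{j\ge0}u_j$ converges absolutely in the Banach space $X$, with
\[
 \norm u\leq M_0\norm w\sum_{j\ge0}2^{-j}=2M_0\norm w .
\]
Because $I-T$ is bounded, the partial sums satisfy $(I-T)\sum_{j<J}u_j=w-w^{(J)}$, which tends to $w$. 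Therefore $u-Tu=w$.

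\textbf{Main point to check.} The only delicate issue is whether Step 1 uses $\norm{v_0}$ and $\norm{v_N}$ correctly; the hypothesis bounds every $v_k$, so this is fine. Each step involves a choice, but no continuity or linearity in $w$ is required, which matches the statement.
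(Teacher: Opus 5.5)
Your proof is correct and is essentially the paper's argument: one fixed horizon $N$ of order $4M_0$, Birkhoff averaging with residual $(v_N-v_0)/N$ of norm at most $\frac{2M_0}{N}\norm w\leq\frac12\norm w$, and summation of the resulting geometric series of corrections. The only cosmetic difference is that the paper takes $N>4M_0$ and bounds $\norm u$ by $M_0\norm w/(1-\kappa)$ with $\kappa=2M_0/N<\frac12$. You allow $N\geq4M_0$ and use the ratio $\frac12$ directly, which yields the same constant $2M_0$.
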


\begin{proof}
Choose once and for all an integer $N>4M_0$, and put
$\kappa:=\frac{2M_0}{N}<\frac12$.
Set $w_0:=w$, apply the hypothesis to $w_0$, and choose
$v_0^{(0)},\ldots,v_N^{(0)}$.  Define the Birkhoff average
$
 u_0:=\frac1N\sum_{k=0}^{N-1}v_k^{(0)}.
$
Using \eqref{eq:abstract-rec},
\begin{align*}
 u_0-Tu_0
 =\frac1N\left(
     \sum_{k=0}^{N-1}v_k^{(0)}
     -\sum_{k=0}^{N-1}(v_{k+1}^{(0)}-w_0)
   \right)
 =w_0+\frac1N\bigl(v_0^{(0)}-v_N^{(0)}\bigr).
\end{align*}
Set
 $w_1:=\frac1N\bigl(v_N^{(0)}-v_0^{(0)}\bigr)$.
Then
\[
 u_0-Tu_0=w_0-w_1,
 \qquad
 \norm{u_0}\leq M_0\norm{w_0},
 \qquad
 \norm{w_1}\leq\kappa\norm{w_0}.
\]

We now define the remaining terms inductively, keeping the same $N$.
Suppose that $w_j$ has been defined for some $j\geq1$.  Apply the
hypothesis of Lemma \ref{lem:Birkhoff} with forcing $w_j$ to choose
$v_0^{(j)},\ldots,v_N^{(j)}\in X$ satisfying
\[
 v_{k+1}^{(j)}=Tv_k^{(j)}+w_j,
 \qquad k=0,\ldots,N-1,
 \qquad
 \max_{0\leq k\leq N}\norm{v_k^{(j)}}\leq M_0\norm{w_j}.
\]
Define
\[
 u_j:=\frac1N\sum_{k=0}^{N-1}v_k^{(j)},
 \qquad
 w_{j+1}:=\frac1N\bigl(v_N^{(j)}-v_0^{(j)}\bigr).
\]
Thus $u_j$ is the correction obtained by averaging the finite
recurrence solution with forcing $w_j$, and $w_{j+1}$ is its
remaining error.  The same calculation as for $j=0$ gives
\begin{equation}
\label{eq:telescoping}
 u_j-Tu_j=w_j-w_{j+1},
 \qquad j\geq0.
\end{equation}
Moreover, for every $j\geq0$,
$$
 \norm{u_j}
 \leq\frac1N\sum_{k=0}^{N-1}\norm{v_k^{(j)}}
 \leq M_0\norm{w_j},\qquad
 \norm{w_{j+1}}
 \leq\frac{\norm{v_N^{(j)}}+\norm{v_0^{(j)}}}{N}
 \leq\kappa\norm{w_j}.
$$
Starting from $w_0=w$, induction therefore gives
\[
 \norm{w_j}\leq\kappa^j\norm w,
 \qquad
 \norm{u_j}\leq M_0\kappa^j\norm w,
 \qquad j\geq0.
\]
The series
$u:=\sum_{j=0}^{\infty}u_j$
converges in $X$, and
\[
 \norm u
 \leq \frac{M_0}{1-\kappa}\norm w
 \leq2M_0\norm w.
\]
For every integer $m\geq0$, summing \eqref{eq:telescoping} gives
\[
 (I-T)\sum_{j=0}^{m}u_j
 =\sum_{j=0}^{m}(w_j-w_{j+1})
 =w-w_{m+1}.
\]
Since $\sum_{j=0}^{m}u_j\to u$, $w_{m+1}\to0$, and $I-T$ is bounded,
passing to the limit gives $(I-T)u=w$.
\end{proof}

\begin{proof}[Proof of Theorem~\ref{thm:cocycle-D-surj}]
Let $L_c$ be the constant from Lemma~\ref{lem:cocycle-SBS}.
Apply Lemma~\ref{lem:finite-cont} with
$w_1=\cdots=w_N=\varphi$.  Its conclusion verifies the hypothesis of
Lemma~\ref{lem:Birkhoff} for $T=T_{\alpha,A}$ with $M_0=2L_c$.
Hence, for every $\varphi\in X$, there exists $v\in X$
satisfying
\[
 v-T_{\alpha,A}v=\varphi,
 \qquad
 \norm v_\infty\leq4L_c\norm\varphi_\infty.
\]
Finally, given $w\in X$, take $\varphi=w\circ\alpha^{-1}$ in the preceding
construction.  Identity~\eqref{eq:cocycle-D-T} gives
$\mathcal D_{\alpha,A}v=w$.  Since $\alpha$ is a homeomorphism,
$\norm{w\circ\alpha^{-1}}_\infty=\norm w_\infty$.  This gives \eqref{eq:cocycle-D-solve} with $K_c=4L_c$.
\end{proof}

\begin{remark}
The cocycle projections enter the construction only through the
orbitwise bounds of Lemma~\ref{lem:cocycle-SBS}.  The finite recurrence
maps involve only continuous data: the base map, cocycle generator, and
forcing.  Michael's theorem therefore provides continuous selections of
initial vectors.  Averaging and successive corrections preserve continuity in the base
point.  The general semiconjugacy construction uses only the resulting
solvability estimate; no linear or continuous dependence of the solution
on the forcing is required.
\end{remark}

\begin{remark}
\label{rem:cocycle-continuity-assumptions}
For Theorems~\ref{thm:cocycle-D-surj} and~\ref{thm:induced-shadowing},
continuity of $A$ in the operator norm suffices; uniform continuity is
used only in the change of base (Theorem~\ref{thm:g-Df-GH}).
\end{remark}

For the first application, take $(\alpha,A)=(f,Df)$ and write
\begin{equation*}
 \mathcal D_f:X\longrightarrow X,
 \qquad
 (\mathcal D_fv)(x):=v(f(x))-Df(x)v(x).
\end{equation*}

\begin{corr}
\label{prop:D-surj}
Let $f\in\DUC(\Bb)$ satisfy generalized hyperbolicity, and let $L$ be the
constant from Lemma~\ref{lem:cocycle-SBS} for the cocycle $(f,Df)$.  Then,
for every $w\in X$, there exists $v\in X$ such that
 $\mathcal D_fv=w$
and
 $\norm v_\infty\leq L'\norm w_\infty$,
 where $L':=4L$.
\end{corr}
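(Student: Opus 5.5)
This corollary is a direct specialization of Theorem~\ref{thm:cocycle-D-surj} to the derivative cocycle $(\alpha,A)=(f,Df)$. The plan is to check the standing cocycle hypotheses, transfer generalized hyperbolicity from $f$ to the cocycle, and then read off the constant.

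First, I will verify the standing assumptions of Subsection~\ref{sec:induced-shadowing-results} for $A(x)=Df(x)$.
\begin{itemize}
\item Since $f$ is a $C^1$-diffeomorphism, $\alpha=f$ is a homeomorphism of $\Bb$.
\item Each $Df(x)$ is invertible with $Df(x)^{-1}=D(f^{-1})(f(x))$, so \eqref{eq:C1-2} gives $\norm{Df(x)},\norm{Df(x)^{-1}}\leq R$; thus $R_A=R$.
\item Uniform continuity of $x\mapsto Df(x)$ follows from \eqref{eq:C1-r1}--\eqref{eq:C1-r2}.
\item Uniform continuity of the inverses follows from the resolvent identity $Df(x)^{-1}-Df(y)^{-1}=Df(x)^{-1}(Df(y)-Df(x))Df(y)^{-1}$, which yields the bound $R^2r(\abs{x-y})$ recorded before Theorem~\ref{thm:cocycle-D-surj}.
\end{itemize}

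Second, I will check that $(f,Df)$ satisfies cocycle generalized hyperbolicity with the same projections and constants $C,\lambda$ as in Definition~\ref{def:GH}.
\begin{itemize}
\item (C-GH1) is identical to (GH1).
\item For (C-GH2), the first inclusion is exactly (GH2). For the second, note that $Df(x)^{-1}=D(f^{-1})(f(x))$. Applying the second inclusion of (GH2) at the point $f(x)$ gives $Df(x)^{-1}E^u_{f(x)}\subseteq E^u_x$.
\item For (C-GH3), the chain rule gives $A^n_f(x)=Df^n(x)$. It also gives $A^{-n}_f(x)=Df(f^{-n}x)^{-1}\cdots Df(f^{-1}x)^{-1}=D(f^{-n})(x)$, since $Df(f^{-j}x)^{-1}=D(f^{-1})(f^{-j+1}x)$. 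So (C-GH3) becomes (GH3).
\end{itemize}

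Finally, I will invoke Theorem~\ref{thm:cocycle-D-surj}. For every $w\in X$ it produces $v\in X$ with $v(f(x))-Df(x)v(x)=w(x)$, that is, $\mathcal D_fv=w$. The bound is $\norm v_\infty\leq K_c\norm w_\infty$, and the proof of that theorem gives $K_c=4L_c$, where $L_c$ is the constant of Lemma~\ref{lem:cocycle-SBS} for this cocycle. Taking $L:=L_c$ gives $L'=4L$.

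There is no real obstacle here; the only point needing care is the index bookkeeping in identifying $A^{-n}_f$ with $D(f^{-n})$ and the second inclusion with (GH2).
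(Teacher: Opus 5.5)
Your proposal is correct and follows the paper's proof exactly: the paper also just notes that $(f,Df)$ satisfies cocycle generalized hyperbolicity (as observed in Subsection~\ref{sec:induced-shadowing-results}) and applies Theorem~\ref{thm:cocycle-D-surj} with $K_c=4L_c$. Your explicit checks of the second inclusion via $Df(x)^{-1}=D(f^{-1})(f(x))$ and of $A_f^{-n}(x)=D(f^{-n})(x)$ correctly fill in the details the paper leaves implicit.
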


\begin{proof}
As observed in Subsection~\ref{sec:induced-shadowing-results}, the cocycle $(f,Df)$
satisfies cocycle generalized hyperbolicity.  Apply
Theorem~\ref{thm:cocycle-D-surj} with $\alpha=f$ and $A=Df$.
\end{proof}

\subsection{Cocycle hyperbolicity over the perturbed base}
\label{sec:perturbed-cocycle}

\begin{thm}
\label{thm:g-Df-GH}
Assume that $f\in\DUC(\Bb)$ satisfies generalized hyperbolicity with
constants $C>0$ and $\lambda\in(0,1)$.  For every
$\lambda'\in(\lambda,1)$ there exist constants $C_1>0$ and $d_1>0$
such that the following holds.  If $g:\Bb\to\Bb$ is a homeomorphism
satisfying
\begin{equation}
\label{eq:g-f-cocycle-small}
 \sup_{x\in\Bb}\abs{g(x)-f(x)}<d_1,
\end{equation}
then the pair $(g,Df)$ satisfies cocycle generalized hyperbolicity with
constants $C_1,\lambda'$.
\end{thm}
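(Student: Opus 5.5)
The plan is to reduce the statement to the robustness of generalized hyperbolicity for sequences of operators (Lemma~\ref{lem:sequence-GH-robustness}), applied orbit by orbit along $g$. Because Definition~\ref{def:CL-cocycle} places no regularity on $x\mapsto P_x,Q_x$, the projections may be defined independently on each $g$-orbit. Consistency is needed only inside a single orbit, and the periodic version of the robustness lemma handles this for periodic orbits.

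\textbf{Choice of constants.} Fix $\lambda'\in(\lambda,1)$ and let $C_1,\varepsilon$ be the constants of Lemma~\ref{lem:sequence-GH-robustness}, determined by $C,\lambda,\lambda'$ and $R$ from \eqref{eq:C1-2}. The relevant sequences are $A_k=Df(f^k(x))$; they satisfy $\norm{A_k},\norm{A_k^{-1}}\leq R$ and are generalized hyperbolic with constants $C,\lambda$, using the projections $P_{f^k(x)},Q_{f^k(x)}$ by (GH1)--(GH3). Let $M,d_0$ be the constants of Theorem~\ref{thm:GH-shadowing}. Using \eqref{eq:C1-r2}, choose $d_1<d_0$ so small that $r(Md_1)<\varepsilon$. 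The standing assumptions on the cocycle $(g,Df)$ are already satisfied: $g$ is a homeomorphism, and $Df$, $Df^{-1}$ are bounded and uniformly continuous.

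\textbf{Non-periodic $g$-orbits.} Fix $y\in\Bb$ whose $g$-orbit is not periodic. By \eqref{eq:g-f-cocycle-small}, $(g^k(y))_{k\in\Zz}$ is a $d_1$-pseudotrajectory of $f$. Theorem~\ref{thm:GH-shadowing}(i) gives $x$ with $\abs{g^k(y)-f^k(x)}\leq Md_1$ for all $k$. By \eqref{eq:C1-r1}, $B_k:=Df(g^k(y))$ satisfies $\norm{B_k-A_k}\leq r(Md_1)<\varepsilon$. Lemma~\ref{lem:sequence-GH-robustness}\ref{item:sequence-robustness-general} then yields projections $P_k',Q_k'$ making $(B_k)$ generalized hyperbolic with constants $C_1,\lambda'$. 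Using the axiom of choice, select one base point $y$ in each non-periodic orbit and set $P'_{g^k(y)}:=P'_k$, $Q'_{g^k(y)}:=Q'_k$. This is well defined because the points $g^k(y)$ are distinct. Conditions (C-GH1)--(C-GH3) for $(g,Df)$ at $z=g^k(y)$ are then exactly the sequence conditions at index $k$, since $A^n_g(z)=B_{k+n-1}\cdots B_k$ and $A_g^{-n}(z)=B_{k-n}^{-1}\cdots B_{k-1}^{-1}$.

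\textbf{Periodic $g$-orbits.} This is the step I expect to be the main obstacle. If $g^m(y)=y$ with minimal period $m$, the index $k$ is defined only modulo $m$, so the projections must be $m$-periodic. Here the pseudotrajectory is $m$-periodic, and Theorem~\ref{thm:GH-shadowing}(ii) provides $x$ with $f^m(x)=x$ within distance $Md_1$. The sequence $A_k=Df(f^k(x))$ is then $m$-periodic. Its projections $P_{f^k(x)}$ are automatically $m$-periodic, because they are indexed by points and $f^{k+m}(x)=f^k(x)$; hence $(A_k)$ satisfies periodic generalized hyperbolicity. Lemma~\ref{lem:sequence-GH-robustness}\ref{item:sequence-robustness-periodic} gives $m$-periodic projections for the $m$-periodic sequence $(B_k)$, with the same $C_1,\lambda'$ for every $m$. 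These periodic projections descend to well-defined projections on the finite orbit. The constants are uniform because both shadowing and robustness are independent of the orbit and of the period. This completes the definition of $P_z,Q_z$ for all $z\in\Bb$.
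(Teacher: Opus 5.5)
Your proposal is correct and follows essentially the same argument as the paper: choose $d_1<d_0$ with $r(Md_1)<\varepsilon$, shadow each $g$-orbit by an $f$-orbit (using periodic shadowing when the $g$-orbit is periodic), compare $Df(g^k(y))$ with $Df(f^k(x))$ via uniform continuity of $Df$, and apply the sequence robustness lemma (its periodic version on periodic orbits) orbit by orbit, with one chosen base point per orbit. The only difference is cosmetic: you treat non-periodic and periodic orbits in separate paragraphs, while the paper handles both in a single pass.
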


\begin{proof}
We use the shadowing statements in Theorem~\ref{thm:GH-shadowing}
and the sequence robustness statement in
Lemma~\ref{lem:sequence-GH-robustness}.

Let \(R\) be as in \eqref{eq:C1-2}. Fix $\lambda'\in(\lambda,1)$ and take $\varepsilon,C_1>0$ from
Lemma~\ref{lem:sequence-GH-robustness} for $C,\lambda,\lambda',R$.
Let $M,d_0>0$ be the constants from Theorem~\ref{thm:GH-shadowing}.
By \eqref{eq:C1-r2}, we may choose $d_1\in(0,d_0)$ so small that
\begin{equation}
\label{eq:cocycle-r-small}
 r(Md_1)<\varepsilon.
\end{equation}

Let $g:\Bb\to\Bb$ be a homeomorphism satisfying
\eqref{eq:g-f-cocycle-small}.  Consider the
partition of $\Bb$ into orbits of $g$ and choose one point $y_0$ on each
orbit.  Put
$y_k:=g^k(y_0)$ for $k\in\Zz$.
Then
\[
 \abs{y_{k+1}-f(y_k)}
 =\abs{g(y_k)-f(y_k)}<d_1,
 \qquad k\in\Zz,
\]
so $\{y_k\}$ is a $d_1$-pseudotrajectory of $f$.
Lipschitz shadowing gives an exact trajectory $\{x_k\}_{k\in\Zz}$
of $f$ such that
\begin{equation}
\label{eq:cocycle-shadowing}
 \abs{x_k-y_k}\leq Md_1,
 \qquad k\in\Zz.
\end{equation}
If the orbit of $y_0$ has least period $m$, use periodic shadowing to
choose $\{x_k\}$ to be $m$-periodic.

Set
$
 A_k:=Df(x_k)$,
 $B_k:=Df(y_k)$.
The sequence $\{A_k\}$ satisfies generalized hyperbolicity with constants
$C,\lambda$, using the spaces $E_{x_k}^s,E_{x_k}^u$.  On the other hand,
\eqref{eq:C1-r1}, \eqref{eq:cocycle-shadowing}, and
\eqref{eq:cocycle-r-small} give
\[
 \norm{A_k-B_k}
 \leq r(\abs{x_k-y_k})
 \leq r(Md_1)<\varepsilon.
\]
Lemma~\ref{lem:sequence-GH-robustness}~\ref{item:sequence-robustness-general}
therefore supplies projections
$\widetilde P_k,\widetilde Q_k$ and a generalized-hyperbolic splitting
$
 \Bb=\widetilde E_k^s\oplus\widetilde E_k^u
$
for $\{B_k\}$, with constants $C_1,\lambda'$.
If the orbit of $y_0$ has least period $m$, both sequences and the
original splitting are $m$-periodic, so the periodic robustness statement
in Lemma~\ref{lem:sequence-GH-robustness}~\ref{item:sequence-robustness-periodic}
allows us to choose the new projections and splitting to be $m$-periodic as well.

Define
 $\widetilde E_{y_k}^{s,u}:=\widetilde E_k^{s,u}$,
 $\widetilde P_{y_k}:=\widetilde P_k$,
 $\widetilde Q_{y_k}:=\widetilde Q_k$,
This is well-defined: 
on an orbit of least period $m$ the chosen
projections and splitting are $m$-periodic.

Making these choices for every orbit of $g$, we obtain a splitting at
every point of $\Bb$.  The uniform projection bounds, inclusions, and
exponential estimates supplied by sequence robustness are independent of
the orbit and are precisely conditions \textup{(C-GH1)}--\textup{(C-GH3)}
for the cocycle $(g,Df)$.  This completes the proof.
\end{proof}

\begin{remark}
Only the forward distance in \eqref{eq:g-f-cocycle-small} is used in the
proof; no estimate of $g^{-1}-f^{-1}$ is required.  The assumption that
$g$ is a homeomorphism is used only to obtain the bi-infinite orbit
decomposition required by the cocycle construction.  The orbitwise
construction uses the Axiom of Choice.  
\end{remark}

\begin{corr}
\label{cor:Dg-surj}
Assume that $f\in\DUC(\Bb)$ satisfies generalized hyperbolicity.  There
exist constants $K_1>0$ and $d_c>0$ such that the following holds.  If
$g:\Bb\to\Bb$ is a homeomorphism and
$
 \norm{g-f}_\infty<d_c,
$
then, for every $w\in X$, there exists $v\in X$ satisfying
\begin{equation*}
 \mathcal D_{g,Df}v=w,
 \qquad
 \norm v_\infty\leq K_1\norm w_\infty,
\end{equation*}
where
\begin{equation*}
 (\mathcal D_{g,Df}v)(x):=v(g(x))-Df(x)v(x).
\end{equation*}
The constants are independent of $g$.
\end{corr}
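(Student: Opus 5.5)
The corollary follows by combining Theorem~\ref{thm:g-Df-GH} with Theorem~\ref{thm:cocycle-D-surj}. First fix once and for all a rate, for instance $\lambda':=(1+\lambda)/2\in(\lambda,1)$, where $C,\lambda$ are the generalized hyperbolicity constants of $f$. Theorem~\ref{thm:g-Df-GH} then supplies constants $C_1>0$ and $d_1>0$, depending only on $f$ through $C,\lambda,R$ and the modulus $r$. We set $d_c:=d_1$. For every homeomorphism $g$ with $\norm{g-f}_\infty<d_c$, the pair $(g,Df)$ satisfies cocycle generalized hyperbolicity with constants $C_1,\lambda'$.

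Next we check that $(g,Df)$ meets the standing assumptions of Subsection~\ref{sec:induced-shadowing-results}, so that Theorem~\ref{thm:cocycle-D-surj} applies. The base map $\alpha=g$ is a homeomorphism by hypothesis. The generator $A=Df$ satisfies $\norm{Df(x)},\norm{Df(x)^{-1}}\leq R$ by \eqref{eq:C1-2}. It is uniformly continuous in operator norm by \eqref{eq:C1-r1}, and so is its inverse, via the estimate $\norm{Df(x)^{-1}-Df(y)^{-1}}\leq R^2r(\abs{x-y})$ recorded earlier. These properties concern only the generator, not the base, so changing the base from $f$ to $g$ does not affect them.

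Theorem~\ref{thm:cocycle-D-surj} then gives, for every $w\in X$, a solution $v\in X$ of $v(g(x))-Df(x)v(x)=w(x)$ with $\norm v_\infty\leq K_c(C_1,\lambda')\norm w_\infty$. We set $K_1:=K_c(C_1,\lambda')$; explicitly, $K_1=4\bigl(C_1+2C_1^2\lambda'/(1-\lambda')\bigr)$.

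The only point needing care is that the constants do not depend on $g$. This holds because $K_c$ depends only on the hyperbolicity constants, and $C_1,\lambda'$ are fixed before $g$ is chosen. The selection and averaging steps inside Theorem~\ref{thm:cocycle-D-surj} use no quantitative property of the base map beyond its being a homeomorphism.
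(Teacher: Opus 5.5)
Your proposal is correct and follows the paper's proof: fix $\lambda'\in(\lambda,1)$, obtain $C_1$ and $d_c$ from Theorem~\ref{thm:g-Df-GH}, and apply Theorem~\ref{thm:cocycle-D-surj} to the cocycle $(g,Df)$ with $K_1=K_c(C_1,\lambda')$. You also check explicitly that the generator $Df$ satisfies the standing bounds and uniform continuity regardless of the base, which the paper leaves implicit, and your formula $K_1=4\bigl(C_1+2C_1^2\lambda'/(1-\lambda')\bigr)$ agrees with $K_c=4L_c$.
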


\begin{proof}
Fix $\lambda'\in(\lambda,1)$, where $\lambda$ is an exponent for the
generalized hyperbolicity of $f$.  Theorem~\ref{thm:g-Df-GH} gives
$C_1,d_c>0$ such that $(g,Df)$ satisfies cocycle generalized
hyperbolicity with constants $C_1,\lambda'$.  Apply
Theorem~\ref{thm:cocycle-D-surj} to this cocycle.  Its estimate is
uniform because $K_1=K_c(C_1,\lambda')$ depends only on the displayed
cocycle constants.
\end{proof}

\section{Semiconjugacies}
\label{sec:semiconjugacies}
\label{sec:nonlinear}

\begin{proof}[Proof of Theorem~\ref{thm:topological-stability}]
Let $K_1,d_c$ be the constants from Corollary~\ref{cor:Dg-surj}.
Choose a radius $\rho>0$ so small that
$K_1r(\rho)\leq\frac12$; this is possible by \eqref{eq:C1-r2}.  Set
$
 M:=2K_1$,
 $d_0:=\min\left\{d_c,\frac{\rho}{2K_1}\right\}$.
Let $g:\Bb\to\Bb$ be a homeomorphism with
$\sup_{x\in\Bb}\abs{g(x)-f(x)}\leq d$, where $0<d<d_0$, and put
$\Delta:=g-f$.  Then $\Delta$ is continuous and
$\norm\Delta_\infty\leq d$, so $\Delta\in X$; no Lipschitz property of
$\Delta$ will be used.  Since $g$ is a homeomorphism and
$\norm{g-f}_\infty\leq d<d_c$, Corollary~\ref{cor:Dg-surj} applies.

Writing the identity $f\circ(\Id+k)=(\Id+k)\circ g$ at a point
$x\in\Bb$ and using \eqref{eq:sdef}, we obtain
\[
 f(x)+Df(x)k(x)+s_f(x,k(x))
 =f(x)+\Delta(x)+k(g(x)).
\]
Consequently, the required identity is equivalent to
\begin{equation}
\label{eq:reverse-cohom}
 \mathcal D_{g,Df} k=\mathcal N(k):=s_f(\cdot,k)-\Delta.
\end{equation}
For $k\in X$ with
$\norm k_\infty\leq\rho$, the field $\mathcal N(k)$ is continuous and
bounded, since $\abs{s_f(x,v)}\leq\abs v\,r(\abs v)$; thus
$\mathcal N$ maps the closed ball of radius $\rho$ in $X$ into $X$, and
$\mathcal N(0)=-\Delta$.  For $k,\widetilde k$ in this ball, the
perturbation cancels:
\[
 \mathcal N(k)(x)-\mathcal N(\widetilde k)(x)
 =s_f(x,k(x))-s_f(x,\widetilde k(x)),
 \qquad x\in\Bb,
\]
and \eqref{eq:s-rho} gives
$
 \norm{\mathcal N(k)-\mathcal N(\widetilde k)}_\infty
 \leq r(\rho)\norm{k-\widetilde k}_\infty.
$
Thus $\mathcal N$ is Lipschitz with constant $r(\rho)$, and no
regularity of $\Delta$ is needed.
Corollary~\ref{cor:Dg-surj} verifies
\eqref{eq:abstract-preimage} for $\mathcal L=\mathcal D_{g,Df}$, and
\[
 \frac{K_1\norm{\Delta}_\infty}{1-K_1r(\rho)}
 \leq2K_1d=Md<\rho.
\]
Lemma~\ref{lem:successive-corrections}, applied to
\eqref{eq:reverse-cohom}, therefore gives a solution $k\in X$ with
$\norm k_\infty\leq Md$.
By \eqref{eq:reverse-cohom}, $K=\Id+k$ satisfies
$f\circ K=K\circ g$, which proves \eqref{eq:top-stability-semiconjugacy}.
\end{proof}

\begin{proof}[Proof of Theorem~\ref{thm:main}]
Let $\Delta:\Bb\to\Bb$ be a bounded Lipschitz map with
\begin{equation}
\label{eq:Delta-small}
 \norm{\Delta}_\infty\leq d,
 \qquad
 \Lip(\Delta)\leq d,
\end{equation}
where $d>0$ will be restricted below, and put
$g:=f+\Delta$.

\medskip
\noindent\textbf{Forward semiconjugacy.}
Write $H=\Id+h$.  The required identity $g\circ H=H\circ f$ is
\begin{equation}
\label{eq:expanded-semi}
 f(x+h(x))+\Delta(x+h(x))=f(x)+h(f(x)).
\end{equation}
Using \eqref{eq:sdef}, equation \eqref{eq:expanded-semi} is equivalent to
\begin{equation}
\label{eq:nonlinear-cohom}
 \mathcal D_fh
 =s_f(\cdot,h)+\Delta\circ(\Id+h).
\end{equation}

Let $L'=4L$ be the constant from Corollary~\ref{prop:D-surj}.
Choose $\rho>0$ so small that
$L' r(\rho)\leq\frac14$.
This is possible by \eqref{eq:C1-r2}.  Put
$
 M_1:=2L'=8L$,
$d_1:=\min\left\{\frac{1}{4L'},\frac{\rho}{2L'}\right\}$.
Fix $d\in(0,d_1)$ and a perturbation satisfying
\eqref{eq:Delta-small}.  Then
\begin{equation}
\label{eq:theta}
 \theta:=L'\bigl(r(\rho)+d\bigr)\leq\frac12,
 \qquad
 M_1d=2L'd<\rho.
\end{equation}

By \eqref{eq:s-rho} and the Lipschitz estimate for $\Delta$, the
right-hand side of \eqref{eq:nonlinear-cohom} defines a map from the
closed ball of radius $\rho$ in $X$ into $X$, with Lipschitz constant
at most $r(\rho)+d$ and value $\Delta$ at $h=0$.
Corollary~\ref{prop:D-surj} verifies
\eqref{eq:abstract-preimage} for $\mathcal L=\mathcal D_f$, while
\eqref{eq:theta} gives
\[
 \frac{L'\norm{\Delta}_\infty}{1-\theta}
 \leq2L'd=M_1d<\rho.
\]
Lemma~\ref{lem:successive-corrections}, applied to
\eqref{eq:nonlinear-cohom}, therefore gives a solution $h\in X$ with
\begin{equation}
\label{eq:hbound-final}
 \norm h_\infty\leq M_1d.
\end{equation}
By \eqref{eq:nonlinear-cohom}, $H=\Id+h$ satisfies
$g\circ H=H\circ f$, with the required estimate
\eqref{eq:hbound-final}.

\medskip
\noindent\textbf{The map $g$ is a bi-Lipschitz homeomorphism.}
Let $R$ be as in \eqref{eq:C1-2}, and require $d<\frac{1}{2R}$.
Put
$\zeta:=\Delta\circ f^{-1}$ and $J:=\Id+\zeta$.
The mean-value estimate and \eqref{eq:C1-2} give
$
 \Lip(\zeta)\leq R\Lip(\Delta)\leq Rd<\frac12.
$
Hence
$J$ is surjective.
Moreover,
\[
 \abs{J(y)-J(\widetilde y)}
 \geq (1-Rd)\abs{y-\widetilde y},
 \qquad y,\widetilde y\in\Bb.
\]
Thus $J$ is a bi-Lipschitz homeomorphism.  Since
$g=J\circ f$,
$g$ is also a bi-Lipschitz homeomorphism.

\medskip
\noindent\textbf{Reverse semiconjugacy.}
Let $M_2,d_2$ denote the constants $M,d_0$ of
Theorem~\ref{thm:topological-stability}, and require $d<d_2$.  Since
$g$ is a homeomorphism with
$\sup_{x\in\Bb}\abs{g(x)-f(x)}=\norm{\Delta}_\infty\leq d$,
Theorem~\ref{thm:topological-stability} gives $k\in X$ with
$f\circ(\Id+k)=(\Id+k)\circ g$ and $\norm k_\infty\leq M_2d$, which is
the second identity in \eqref{eq:main-semiconjugacy-reverse}.
Finally, take $M:=\max\{M_1,M_2\}$ and
$d_0:=\min\left\{d_1,d_2,\frac{1}{2R}\right\}$.
For every $0<d<d_0$, the two constructions give the required
semiconjugacies with $\norm h_\infty,\norm k_\infty\leq Md$.
\end{proof}

\section{Infinite Product of Morse--Smale Systems}
\label{sec:infinite-morse-smale}

We prove structural stability of the coordinatewise Morse--Smale product
on $\ell^p(\Zz)$, $1\leq p\leq+\infty$.
Theorem~\ref{thm:main} gives semiconjugacies in both directions, but does
not establish that they are mutually inverse.  Here we construct
$H=\Id+h$ and $K=\Id+k$ with compatible normalizations and prove that
the two composition defects
\[
 \xi_1=K\circ H-\Id=h+k\circ H,\qquad
 \xi_2=H\circ K-\Id=k+h\circ K
\]
vanish.

\subsection{The product map and elementary estimates}

Throughout this section, $a$, $c$, $\lambda_1$, $M$, $\Bb$, and $f$
are as in Subsection~\ref{sec:product-results}, with a fixed
$p\in[1,+\infty]$.
The inverse is $f^{-1}(x)_j=a^{-1}(x_j)$, and the
derivative is the diagonal operator
$
 Df(x)\in\mathcal L(\Bb)$, $(Df(x)v)_j=a'(x_j)v_j.
$
Moreover,
\begin{equation*}
 \|Df(x)\|\leq\lambda_1^{-1},
 \qquad
 \|Df(x)^{-1}\|\leq\lambda_1^{-1},
 \qquad x\in\Bb.
\end{equation*}
Because the norm of a diagonal multiplier on $\ell^p$ is the supremum of
the absolute values of its diagonal coefficients, \eqref{eq:product-a-bounds}
gives
\begin{equation}
\label{eq:product-Df-Lip}
 \|Df(x)-Df(y)\|
 =\sup_{j\in\Zz}|a'(x_j)-a'(y_j)|
 \leq M|x-y|_\infty
 \leq M|x-y|_p.
\end{equation}

We use the following estimate for the nonlinear remainder $s_f$ from \eqref{eq:sdef}:
\begin{equation*}
 |s_f(x,v)|_p
 \leq \frac M2 |v|_\infty |v|_p
 \leq \frac M2 |v|_p^2.
\end{equation*}
If $|u|_p,|v|_p\leq\rho$, then the derivative of the map
$v\mapsto s_f(x,v)$ is $Df(x+v)-Df(x)$.  Consequently,
\begin{equation}
\label{eq:product-s-Lip-v}
 |s_f(x,u)-s_f(x,v)|_p\leq M\rho|u-v|_p.
\end{equation}
For fixed $v$ with $|v|_p\leq\rho$, the coordinatewise mean-value theorem gives
\begin{equation}
\label{eq:product-s-Lip-x}
 |s_f(x,v)-s_f(y,v)|_p
 \leq 2M|v|_\infty|x-y|_p
 \leq 2M\rho|x-y|_p.
\end{equation}
Indeed, the scalar mean-value theorem and $|a''|\leq M$ give, for every
coordinate $j$,
\[
 |(s_f(x,v)-s_f(y,v))_j|\leq 2M|v_j|\,|x_j-y_j|.
\]
Taking the $\ell^p$ norm yields \eqref{eq:product-s-Lip-x}, including
when $p=\infty$.

\subsection{One-dimensional estimates}

We collect the scalar pseudotrajectory estimates that control the
coefficients of the cohomological operators below.  The three lemmas
separate the cutoff and trapping geometry, the exponential bounds on
subsegments, and the bounds on Green coefficients and their Lipschitz
dependence.  Empty products are understood as one.

\begin{lem}
\label{lem:product-scalar-trapping}
There exist a Lipschitz function $\eta:\mathbb R\to[0,1]$, positive
numbers $\gamma,\beta$, and $\delta_*>0$, depending only on the scalar
diffeomorphism $a$, with the following properties.  Let
$(t_n)_{n\in\Zz}$ be any scalar pseudotrajectory satisfying
\begin{equation}
\label{eq:product-scalar-pseudo}
 |t_{n+1}-a(t_n)|<\delta_*,
 \qquad n\in\Zz.
\end{equation}
\begin{enumerate}[label=\textup{(\roman*)},ref=(\roman*),leftmargin=10mm]
\item \label{item:product-scalar-cutoff}
\textbf{Cutoff.} For every $t\in\mathbb R$, the cutoff satisfies
\begin{equation}
\label{eq:product-eta-implications}
 \eta(t)\neq0\Longrightarrow |a(t)|>c+4\gamma,
 \qquad
 1-\eta(t)\neq0\Longrightarrow |t|<c-4\gamma.
\end{equation}

\item \label{item:product-scalar-margins}
\textbf{Margins.} For every $r\in\Zz$,
\begin{equation}
\label{eq:product-stable-margin}
 \eta(t_r)\neq0
 \quad\Longrightarrow\quad
 |t_k|>c+2\gamma+\beta
 \quad\text{for every }k\geq r+1.
\end{equation}
\begin{equation}
\label{eq:product-unstable-margin}
 1-\eta(t_r)\neq0
 \quad\Longrightarrow\quad
 |t_k|<c-2\gamma-\beta
 \quad\text{for every }k\leq r.
\end{equation}

\end{enumerate}
\end{lem}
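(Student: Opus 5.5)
The plan is to build everything from the monotonicity of $a$ and the sign conditions \eqref{eq:product-a-order}, using only that $c=1/2$ lies strictly between the repeller $0$ and the attractors $\pm1$. Since $a$ is increasing with $a(t)>t$ on $(0,1)$ and $a(t)<t$ on $(-1,0)$, we have $a(c)>c$, $a(-c)<-c$, $a^{-1}(c)<c$ and $a^{-1}(-c)>-c$. By continuity, there is a small $\gamma>0$ such that all of the following hold with a positive gap:
\[
 a^{-1}(c+4\gamma)<c-4\gamma,
 \qquad
 a^{-1}(-c-4\gamma)>-c+4\gamma,
\]
\[
 a(s)>s\ \text{for } s=c+3\gamma,
 \qquad
 a(-s)<-s\ \text{for } s=c+3\gamma,
\]
\[
 a^{-1}(s)<s\ \text{and}\ a^{-1}(-s)>-s\ \text{for } s=c-3\gamma.
\]
The only genuinely new closed inequalities here are those at $c\pm4\gamma$ and $c\pm3\gamma$; they are open conditions at $\gamma=0$ by the strict inequalities above. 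I would then set $\beta:=\gamma$.

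For the cutoff, define $\eta$ separately on each half-line as a piecewise linear function. On $t\geq0$, take $\eta=0$ on $[0,a^{-1}(c+4\gamma)]$, $\eta=1$ on $[c-4\gamma,\infty)$, and linear in between. On $t\leq0$, proceed symmetrically with $a^{-1}(-c-4\gamma)$ and $-c+4\gamma$. This is well defined by the first displayed inequalities, and it is Lipschitz with constant bounded by the reciprocal of the gaps. For (i), if $\eta(t)\neq0$ then, say for $t>0$, we have $t>a^{-1}(c+4\gamma)$, hence $|a(t)|>c+4\gamma$ because $a$ is increasing. If $1-\eta(t)\neq0$ then $|t|<c-4\gamma$ directly.

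For the stable margin (ii), put $s_0:=c+3\gamma=c+2\gamma+\beta$. I would choose
\[
 \delta_*<\min\{\gamma,\ a(s_0)-s_0,\ -s_0-a(-s_0)\}.
\]
If $\eta(t_r)\neq0$, then (i) gives $|a(t_r)|>c+4\gamma$, so $|t_{r+1}|>c+4\gamma-\delta_*>s_0$. The sign is preserved since $a(t_r)$ and $t_{r+1}$ are within $\delta_*<\gamma$. Induction then gives the claim for all $k\geq r+1$. If $t_k>s_0$, then $a(t_k)>a(s_0)$ by monotonicity, so $t_{k+1}>a(s_0)-\delta_*>s_0$; the negative side is symmetric.

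For the unstable margin, put $s_1:=c-3\gamma$ and additionally require
\[
 \delta_*<\min\{s_1-a^{-1}(s_1)\ \text{slack},\ \ldots\}.
\]
Concretely, I would require $a(s_1)-s_1>\delta_*$ and $-s_1-a(-s_1)>\delta_*$ in the \emph{reverse} sense, namely $a^{-1}(s_1+\delta_*)<s_1$ and $a^{-1}(-s_1-\delta_*)>-s_1$. Both are possible for small $\delta_*$ since $a^{-1}(\pm s_1)$ lies strictly inside $(-s_1,s_1)$. The base case is $|t_r|<c-4\gamma<s_1$. For the backward step, if $|t_{k+1}|<s_1$, then $|a(t_k)|<s_1+\delta_*$, so $t_k\in(a^{-1}(-s_1-\delta_*),a^{-1}(s_1+\delta_*))\subset(-s_1,s_1)$. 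Downward induction gives the claim for all $k\leq r$.

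The only delicate point is the order of choices. The cutoff needs room between $a^{-1}(c+4\gamma)$ and $c-4\gamma$, while the margins must survive errors of size $\delta_*$. The fix is to choose $\gamma$ first from the strict inequalities at $\pm c$, then set $\beta=\gamma$, and choose $\delta_*$ last. One must also avoid needing a uniform lower bound for $a(t)-t$ on all of $(c,1)$, which fails near $1$. Monotonicity of $a$ bypasses this: the induction only compares $t_k$ with the fixed thresholds $s_0$ and $s_1$.
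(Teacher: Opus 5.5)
Your proposal is correct and follows essentially the paper's argument: the same cutoff with breakpoints $a^{-1}(\pm(c+4\gamma))$ and $\pm(c-4\gamma)$, with $\gamma$ fixed first and $\delta_*$ last, and forward and backward induction against fixed thresholds using the monotonicity of $a$. The differences are cosmetic. You take $\beta=\gamma$ with thresholds $c\pm3\gamma$, whereas the paper uses thresholds $c\pm2\gamma$ and $\beta=\min\{\gamma,\alpha_s/2,\alpha_u/2\}$. Your backward step uses monotonicity of $a$ directly, while the paper uses $\operatorname{Lip}(a^{-1})\leq\lambda_1^{-1}$.
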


\begin{lem}
\label{lem:product-scalar-pseudotrajectories}
Fix the cutoff and margins from Lemma~\ref{lem:product-scalar-trapping}.
After decreasing $\delta_*$ if necessary, there exist constants
$C_0>0$ and $\theta\in(0,1)$,
depending only on the map $a$, such that the following holds for every
$\delta_*$-pseudotrajectory.
For $q\in\Zz$ and $m\geq0$, a segment $t_q,\ldots,t_{q+m}$ contained
in $|t|>c+2\gamma$ satisfies
\begin{equation}
\label{eq:product-stable-subsegment}
 \left|\prod_{k=q}^{q+m-1}a'(t_k)\right|
 \leq C_0\theta^m,
\end{equation}
and a segment contained in $|t|<c-2\gamma$ satisfies
\begin{equation}
\label{eq:product-unstable-subsegment}
 \left|\prod_{k=q}^{q+m-1}\frac1{a'(t_k)}\right|
 \leq C_0\theta^m.
\end{equation}
\end{lem}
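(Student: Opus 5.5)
The argument splits into the stable estimate \eqref{eq:product-stable-subsegment} and the symmetric unstable estimate \eqref{eq:product-unstable-subsegment}. In both cases the idea is to compare the pseudotrajectory with exact orbits of $a$ over a bounded number of steps, and to pay for the comparison with an error that vanishes as $\delta_*\to0$.

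\textbf{Choice of horizon and $\delta_*$.} First fix an auxiliary exponent $\theta_0\in(\lambda,1)$ and an integer $N\geq1$ with $C\lambda^N\leq\theta_0^N/2$; this is possible since $\lambda<\theta_0$. Then shrink $\delta_*$ so that two things hold simultaneously.
\begin{enumerate}[label=\textup{(\roman*)},leftmargin=10mm]
\item Any $\delta_*$-pseudotrajectory segment of length at most $N$ stays within $\gamma$ of the exact orbit through its initial point. The drift is bounded by $\delta_*\sum_{i<N}\lambda_1^{-i}$, using $\Lip(a)\leq\lambda_1^{-1}$ from \eqref{eq:product-a-bounds}.
\item Consequently the exact orbit $s_k=a^{k-q}(t_q)$ satisfies $\abs{s_k}>c+\gamma>c$ whenever $\abs{t_k}>c+2\gamma$ along the block.
\end{enumerate}

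\textbf{Stable case.} Compare the product of derivatives along the pseudotrajectory with the product along the exact orbit. Since $\abs{a''}\leq M$ and $a'\geq\lambda_1$, each factor satisfies $a'(t_k)\leq a'(s_k)\bigl(1+M\lambda_1^{-1}\abs{t_k-s_k}\bigr)$. Over a block of length $N$ this gives
\[
 \prod a'(t_k)\leq (1+O(\delta_*))^N\,(a^N)'(s_q)\leq (1+O(\delta_*))^N C\lambda^N,
\]
where the last step uses \eqref{eq:product-scalar-estimates}, valid because $\abs{s_q}>c$. For $\delta_*$ small this is at most $\theta_0^N$.

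Now split a segment of length $m$ into $\lfloor m/N\rfloor$ full blocks plus a remainder of fewer than $N$ steps. The blocks are multiplied together, and the remainder is bounded crudely by $\lambda_1^{-N}$. This yields
\[
 \prod_{k=q}^{q+m-1}a'(t_k)\leq \lambda_1^{-N}\theta_0^{-N}\,\theta_0^m,
\]
so one may take $C_0=(\lambda_1\theta_0)^{-N}$ and $\theta=\theta_0$.

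\textbf{Unstable case.} This is identical, using backward exact orbits. Set $s_k=a^{-(q+m-k)}(t_{q+m})$ within each block, anchored at the block's right end. The pseudotrajectory inequality gives $\abs{a^{-1}(t_{k+1})-t_k}\leq\lambda_1^{-1}\delta_*$, so the backward drift is controlled in the same way. Then apply the second estimate of \eqref{eq:product-scalar-estimates}, which holds for $\abs{s}\leq c$ and hence for $\abs{s}<c-\gamma$. The factor is written as $1/a'(t_k)=(a^{-1})'(a(t_k))$, and closeness of $a(t_k)$ to $t_{k+1}$ keeps the comparison valid.

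\textbf{Main obstacle.} The delicate step is ensuring that the exact comparison orbit stays in the region where \eqref{eq:product-scalar-estimates} applies throughout each block. This depends on using the margin $2\gamma$ relative to the $\gamma$-closeness, and on choosing $\delta_*$ only after $N$ has been fixed. No margin estimate from Lemma~\ref{lem:product-scalar-trapping} is needed beyond the existence of $\gamma$.
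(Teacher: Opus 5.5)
Your proposal is correct and follows essentially the paper's argument. You fix a block length $N$ from \eqref{eq:product-scalar-estimates}, shrink $\delta_*$ afterwards so that each length-$N$ block product stays close to the exact-orbit product anchored at a point of the segment, and concatenate blocks with a crude bound on the remainder; the paper makes the comparison additively via the telescoping identity with thresholds $\tfrac14+\tfrac14$, giving $\theta=2^{-1/N}$, whereas your multiplicative $(1+O(\delta_*))^N$ comparison gives $\theta=\theta_0$. One simplification: \eqref{eq:product-scalar-estimates} only constrains the anchor point, which is $t_q$ (respectively the block's right endpoint) itself, so your step keeping the whole exact comparison orbit inside the region, your ``main obstacle'', is not needed.
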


\begin{lem}
\label{lem:product-scalar-Green-Lipschitz}
Use the cutoff and constants from
Lemmas~\ref{lem:product-scalar-trapping}
and~\ref{lem:product-scalar-pseudotrajectories}.  For a $\delta_*$-pseudotrajectory $t=(t_k)_{k\in\Zz}$, define
\begin{equation*}
 b^s_{n,r}(t)=\eta(t_r)\prod_{k=r+1}^{n-1}a'(t_k) \qquad r<n,
\end{equation*}
\[
 b^u_{n,r}(t)=(1-\eta(t_r))\prod_{k=n}^{r}\frac1{a'(t_k)}, \qquad r\geq n.
\]
\begin{enumerate}[label=\textup{(\roman*)},ref=(\roman*),leftmargin=10mm]
\item \label{item:product-scalar-coefficients}
\textbf{Coefficient bounds.} After increasing $C_0$ if necessary,
every such pseudotrajectory satisfies
\begin{equation}
\label{eq:product-stable-coefficient}
 |b^s_{n,r}(t)|
 \leq C_0\theta^{n-r-1},
 \qquad r<n,
\end{equation}
\begin{equation}
\label{eq:product-unstable-coefficient}
 |b^u_{n,r}(t)|
 \leq C_0\theta^{r-n+1},
 \qquad n\leq r.
\end{equation}

\item \label{item:product-scalar-Green-Lipschitz}
\textbf{Lipschitz dependence.}
There exist $\rho_*>0$, $C_4>0$, and $\theta_1\in(\theta,1)$, depending
only on the map $a$, such that, whenever $t,\widetilde t$ are two $\delta_*$-pseudotrajectories and
$
 \varepsilon:=\sup_{k\in\Zz}|t_k-\widetilde t_k|<\rho_*,
$
one has
\begin{equation}
\label{eq:product-scalar-Green-Lip-s}
 |b^s_{n,r}(t)-b^s_{n,r}(\widetilde t)|
 \leq C_4\theta_1^{n-r-1}\varepsilon,
 \qquad r<n,
\end{equation}
\begin{equation}
\label{eq:product-scalar-Green-Lip-u}
 |b^u_{n,r}(t)-b^u_{n,r}(\widetilde t)|
 \leq C_4\theta_1^{r-n+1}\varepsilon,
 \qquad r\geq n.
\end{equation}
\end{enumerate}
\end{lem}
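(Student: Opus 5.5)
We prove the two parts in order. Part (i) follows from the trapping geometry and the subsegment bounds. Part (ii) follows from a telescoping estimate for products, combined with a separate treatment of the cutoff factor.

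\textbf{Coefficient bounds.} Consider $b^s_{n,r}(t)$ with $r<n$. If $\eta(t_r)=0$ there is nothing to prove. Otherwise \eqref{eq:product-stable-margin} gives $|t_k|>c+2\gamma$ for all $k\geq r+1$. The segment $t_{r+1},\dots,t_{n-1}$ then lies in $|t|>c+2\gamma$, and \eqref{eq:product-stable-subsegment} with $q=r+1$, $m=n-r-1$ yields $|b^s_{n,r}|\leq C_0\theta^{n-r-1}$, using $0\leq\eta\leq1$.

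The unstable coefficient is handled symmetrically. If $1-\eta(t_r)\neq0$, then \eqref{eq:product-unstable-margin} places $t_n,\dots,t_r$ in $|t|<c-2\gamma$. Applying \eqref{eq:product-unstable-subsegment} with $q=n$, $m=r-n+1$ gives \eqref{eq:product-unstable-coefficient}. No enlargement of $C_0$ is needed here, but we keep the freedom for part (ii).

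\textbf{Lipschitz dependence.} Write $b^s_{n,r}(t)-b^s_{n,r}(\widetilde t)$ as
\[
 (\eta(t_r)-\eta(\widetilde t_r))\Pi(t)+\eta(\widetilde t_r)\bigl(\Pi(t)-\Pi(\widetilde t)\bigr),
 \qquad \Pi(t)=\prod_{k=r+1}^{n-1}a'(t_k).
\]
The key difficulty is that the margin bound for $\Pi(t)$ requires $\eta(t_r)\neq0$. In the first term, however, only one of $\eta(t_r),\eta(\widetilde t_r)$ may be nonzero. The margins resolve this. If $\eta(t_r)\neq0$ or $\eta(\widetilde t_r)\neq0$, then one of the two trajectories satisfies $|t_k|>c+2\gamma+\beta$ for $k\geq r+1$. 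Choosing $\rho_*<\beta$ ensures that both trajectories lie in $|t|>c+2\gamma$ on $k\geq r+1$, so \eqref{eq:product-stable-subsegment} bounds both $\Pi(t)$ and $\Pi(\widetilde t)$ by $C_0\theta^{n-r-1}$. This is exactly why the margin $\beta$ was built into Lemma~\ref{lem:product-scalar-trapping}. The first term is then at most $\Lip(\eta)\,\varepsilon\, C_0\theta^{n-r-1}$. If both $\eta$ values vanish, the difference is zero.

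For the second term, telescope:
\[
 \Pi(t)-\Pi(\widetilde t)=\sum_{j=r+1}^{n-1}\Bigl(\prod_{k<j}a'(\widetilde t_k)\Bigr)\bigl(a'(t_j)-a'(\widetilde t_j)\bigr)\Bigl(\prod_{k>j}a'(t_k)\Bigr).
\]
Each partial product is a subsegment product of a trajectory lying in the good region, so it is bounded by $C_0\theta^{\text{length}}$. We have $|a'(t_j)-a'(\widetilde t_j)|\leq M\varepsilon$. Bounding $\theta^{\,j-r-1}\theta^{\,n-1-j}=\theta^{n-r-2}$, the sum is at most
\[
 C_0^2M\theta^{-1}(n-r-1)\,\theta^{n-r-1}\varepsilon .
\]
Absorb the linear factor: for any $\theta_1\in(\theta,1)$ we have $(m)\theta^m\leq C(\theta,\theta_1)\theta_1^m$. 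This gives \eqref{eq:product-scalar-Green-Lip-s} with a suitable $C_4$.

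The unstable case is identical, using the products of $1/a'$, the bound $|1/a'(s)-1/a'(\widetilde s)|\leq\lambda_1^{-2}M|s-\widetilde s|$, the margin \eqref{eq:product-unstable-margin} (again with $\rho_*<\beta$ ensuring both trajectories stay in $|t|<c-2\gamma$ for $k\leq r$), and the Lipschitz bound on $1-\eta$.

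The main obstacle is the mismatch of cutoff supports just described. It is resolved by choosing $\rho_*<\beta$, so that one nonvanishing cutoff forces both trajectories into the region where the subsegment estimates of Lemma~\ref{lem:product-scalar-pseudotrajectories} apply. Both $t$ and $\widetilde t$ are assumed to be $\delta_*$-pseudotrajectories, so those estimates are indeed available for each of them.
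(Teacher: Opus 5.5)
Your overall structure matches the paper's proof, and your stable-case arguments in (i) and (ii) are correct as written. The shared structure is: choose $\rho_*<\beta$ so that one nonzero cutoff forces both sequences into the good region, split the difference into a cutoff term and a product term, telescope the product term, and absorb the linear factor by passing to $\theta_1$.

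The gap is an off-by-one in the unstable coefficient. Lemma~\ref{lem:product-scalar-pseudotrajectories} bounds $\prod_{k=q}^{q+m-1}1/a'(t_k)$ only when the whole segment $t_q,\dots,t_{q+m}$ lies in $|t|<c-2\gamma$. This includes the point $t_{q+m}$ beyond the last factor. That endpoint is genuinely used in the lemma's proof: the exact backward orbit is started at the right end of each block, and \eqref{eq:product-scalar-estimates} needs that point to satisfy $|t|\le c$.

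Your choice $q=n$, $m=r-n+1$ therefore requires $|t_{r+1}|<c-2\gamma$. However, \eqref{eq:product-unstable-margin} controls only $k\le r$. This is not a technicality. Just to the right of $a^{-1}(c+4\gamma)$ the cutoff satisfies $\eta<1$, so $1-\eta(t_r)\neq0$ is possible while $a(t_r)>c+4\gamma$. Then $t_{r+1}>c+3\gamma$ lies in the \emph{stable} region. Hence your remark that no enlargement of $C_0$ is needed is unjustified. The same defect recurs in (ii): the tail products $\prod_{k=j+1}^{r}1/a'(t_k)$ in your telescoping sum are again segments ending at the uncontrolled point $t_{r+1}$.

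The repair is the one the paper uses: peel off the last factor. Write $b^u_{n,r}(t)=(1-\eta(t_r))\,b(t_r)\prod_{k=n}^{r-1}b(t_k)$ with $b=1/a'$. Then apply the subsegment estimate with $m=r-n$ to $t_n,\dots,t_r$, which the margin does control. Let $\widehat C_0$ denote the constant of Lemma~\ref{lem:product-scalar-pseudotrajectories} before any enlargement. Since $|b|\le\Lambda=\lambda_1^{-1}$, this gives $|b^u_{n,r}(t)|\le\Lambda\widehat C_0\theta^{r-n}$. You must then replace $C_0$ by $\max\{\widehat C_0,\Lambda\widehat C_0/\theta\}$; this is exactly the ``increasing $C_0$'' in the statement.

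For (ii), treat $(1-\eta)b$ as a single endpoint factor, with $|(1-\eta)b|\le\Lambda$ and $\Lip((1-\eta)b)\le\Lambda\Lip(\eta)+M\Lambda^2$. Telescope only over $k=n,\dots,r-1$, and absorb the resulting factor $\theta_1^{-1}$ into $C_4$. Equivalently, keep your telescoping but bound each tail product by $\Lambda C_0\theta^{r-j-1}$ for $j<r$.
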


The proofs of all three lemmas are given in
Appendix~\ref{app:product-scalar-pseudotrajectories}.

\subsection{Green operators along corrected trajectories}
\label{sec:product-common-Green-operators}

Fix the cutoff and the final constants from
Lemmas~\ref{lem:product-scalar-trapping}-\ref{lem:product-scalar-Green-Lipschitz}.
For each $x=(x_j)_{j\in\Zz}\in\Bb$, define
\begin{equation*}
 E(x):\Bb\longrightarrow\Bb,
 \qquad
 E(x)v=(\eta(x_j)v_j)_{j\in\Zz}.
\end{equation*}
Then $E(x)\in\mathcal L(\Bb)$.  In this way we obtain a map
$E:\Bb\to\mathcal L(\Bb)$, $x\to E(x)$.
The map $E$ is generally nonlinear as a function of $x$, while for each
fixed $x$ the operator $E(x)$ is linear in $v$.  Moreover,
\begin{equation*}
 \|E(x)\|\leq1,
 \quad
 \|\Id_{\Bb}-E(x)\|\leq1,
 \quad
 \|E(x)-E(y)\|
 =\sup_{j\in\Zz}|\eta(x_j)-\eta(y_j)|
 \leq \operatorname{Lip}(\eta)|x-y|_p.
\end{equation*}

Let $C_4,\theta_1,\rho_*$ be the constants from
Lemma~\ref{lem:product-scalar-Green-Lipschitz}~\ref{item:product-scalar-Green-Lipschitz}.  Set
\begin{equation}
\label{eq:product-Green-sum-constants}
 L:=C_0\frac{1+\theta}{1-\theta},
 \qquad
 L_1:=C_4\frac{1+\theta_1}{1-\theta_1}.
\end{equation}
The constant $L$ sums the coefficient bounds for the two halves of the
Green formulas, while $L_1$ sums the bounds for coefficient differences.

Let $C_b^0(\Bb,\Bb)$ be the Banach space of bounded continuous maps
$w:\Bb\to\Bb$, with norm
$|w|_\infty:=\sup_{x\in\Bb}|w(x)|_p$.
Let $\alpha:\Bb\to\Bb$ be a homeomorphism; below we shall use
$\alpha=f$ and $\alpha=g$, where $g$ is a small perturbation of $f$.
For $u\in C_b^0(\Bb,\Bb)$, put
\begin{equation}
\label{eq:product-field-coefficients}
\begin{gathered}
 \Phi_u:=\Id+u,\qquad A_u:=Df\circ\Phi_u,\qquad E_u:=E\circ\Phi_u,\\
 \Phi_u(x)=x+u(x),\qquad A_u(x)=Df(x+u(x)),\qquad E_u(x)=E(x+u(x)).
\end{gathered}
\end{equation}
Call $u$ admissible for $\alpha$ if
\begin{equation}
\label{eq:product-Green-admissibility}
 \sup_{x\in\Bb}|\Phi_u(\alpha(x))-f(\Phi_u(x))|_p<\delta_*.
\end{equation}
Every coordinate of $\Phi_u(\alpha^r(x))$, $r\in\Zz$, then satisfies
\eqref{eq:product-scalar-pseudo}.  

For a fixed admissible $u$, our aim is to solve the linear cohomological
equation
\begin{equation}
\label{eq:product-Salpha-cohomological}
 v(\alpha(x))-Df(x+u(x))v(x)=w(x),\qquad x\in\Bb,
\end{equation}
for a bounded continuous field $v$.  
For $w\in C_b^0(\Bb,\Bb)$ and an admissible $u\in C_b^0(\Bb,\Bb)$, define
\begin{align}
\label{eq:product-Sg}
 (\mathcal S_\alpha(u)w)(x)
 &=\sum_{m=0}^{\infty}
 \bigl[A_u(\alpha^{-1}(x))\cdots A_u(\alpha^{-m}(x))\bigr]
 E_u(\alpha^{-m-1}(x))w(\alpha^{-m-1}(x))\notag\\
 &\quad-\sum_{m=0}^{\infty}
 \bigl[A_u(x)^{-1}\cdots A_u(\alpha^m(x))^{-1}\bigr]
 (\Id-E_u(\alpha^m(x)))w(\alpha^m(x)).
\end{align}
\begin{remark}
The operator $E$ plays a role similar to that of $\Xi_{1, 2}$ in \cite[Example 22]{TikhomirovGH}. A somewhat similar construction was used in
\cite{MessaoudiEtal} to study linear operators under the weaker assumption of
pseudo-hyperbolicity.
\end{remark}

Each $\mathcal S_\alpha(u)$ is linear in the forcing $w$;
its dependence on $u$ is generally nonlinear.  
In the bounds below, operator norms are taken on $C_b^0(\Bb,\Bb)$.

\begin{lem}[Properties of the Green operators]
\label{lem:product-Salpha}
Let $u\in C_b^0(\Bb,\Bb)$ be admissible for $\alpha$.
\begin{enumerate}[label=\textup{(\roman*)},ref=(\roman*),leftmargin=10mm]
\item\label{item:product-Salpha-bound}
\textbf{Boundedness.}  The series in \eqref{eq:product-Sg} converge
absolutely and uniformly in $x$, the operator $\mathcal S_\alpha(u)$
maps $C_b^0(\Bb,\Bb)$ into itself, and
\begin{equation}
\label{eq:product-Salpha-bound}
 \|\mathcal S_\alpha(u)\|\leq L.
\end{equation}
\item\label{item:product-Salpha-cohomological}
\textbf{Cohomological identity.}  For every $w\in C_b^0(\Bb,\Bb)$, the
field $v=\mathcal S_\alpha(u)w$ satisfies
\eqref{eq:product-Salpha-cohomological}; equivalently,
$\mathcal D_{\alpha,A_u}\mathcal S_\alpha(u)=\Id_{C_b^0(\Bb,\Bb)}$.
\item\label{item:product-Salpha-Lip}
\textbf{Lipschitz dependence on the displacement.}  If $\widetilde u$
is also admissible for $\alpha$ and $|u-\widetilde u|_\infty<\rho_*$,
then
\begin{equation}
\label{eq:product-Salpha-Lip}
 \|\mathcal S_\alpha(u)-\mathcal S_\alpha(\widetilde u)\|
 \leq L_1|u-\widetilde u|_\infty.
\end{equation}
\end{enumerate}
\end{lem}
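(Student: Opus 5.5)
The plan is to work coordinatewise, reducing everything to the scalar Green coefficients of Lemma~\ref{lem:product-scalar-Green-Lipschitz}. Fix $x$ and put $t^{(j)}_k:=(\Phi_u(\alpha^k(x)))_j$. By admissibility \eqref{eq:product-Green-admissibility} and $|z|_\infty\le|z|_p$, each $t^{(j)}$ is a scalar $\delta_*$-pseudotrajectory of $a$.

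Since $A_u$ and $E_u$ are diagonal, the $j$-th coordinate of the $m$-th term of the first series is $b^s_{0,-m-1}(t^{(j)})\,w(\alpha^{-m-1}(x))_j$. Here the product $\prod_{k=-m}^{-1}a'(t_k)$ together with the factor $\eta(t_{-m-1})$ is exactly $b^s_{n,r}$ with $n=0$ and $r=-m-1$. Likewise, the $m$-th term of the second series has $j$-th coordinate $b^u_{0,m}(t^{(j)})\,w(\alpha^m(x))_j$.

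\textbf{Boundedness.} By \eqref{eq:product-stable-coefficient} and \eqref{eq:product-unstable-coefficient}, the diagonal operators multiplying $w(\cdot)$ in the $m$-th terms have norms at most $C_0\theta^m$ and $C_0\theta^{m+1}$, because a diagonal multiplier's norm on $\ell^p$ is the supremum of its entries. Hence the total is at most $C_0\sum_m(\theta^m+\theta^{m+1})\,|w|_\infty=L|w|_\infty$, and the convergence is uniform in $x$.

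\textbf{Continuity.} Each term is continuous in $x$. This uses the continuity of $\alpha^{\pm1}$ and $u$, and the fact that $x\mapsto Df(\Phi_u(x))$ and $x\mapsto E(\Phi_u(x))$ are norm-continuous, by \eqref{eq:product-Df-Lip} and the Lipschitz bound on $E$. Uniform convergence then makes the sum continuous, so $\mathcal S_\alpha(u)$ maps $C_b^0(\Bb,\Bb)$ into itself.

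\textbf{Cohomological identity.} Evaluate $v$ at $\alpha(x)$ and reindex. In the first series at $\alpha(x)$, the $m=0$ term is $E_u(x)w(x)$. For $m\ge1$ the terms are $A_u(x)$ applied to the $(m-1)$-th term of the first series at $x$. So the first series at $\alpha(x)$ equals $E_u(x)w(x)+A_u(x)\cdot(\text{first series at }x)$.

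In the second series at $x$, the $m=0$ term is $A_u(x)^{-1}(\Id-E_u(x))w(x)$. The remaining terms are $A_u(x)^{-1}$ applied to the second series at $\alpha(x)$. Multiplying by $A_u(x)$ gives $A_u(x)\cdot(\text{second at }x)=(\Id-E_u(x))w(x)+(\text{second at }\alpha(x))$.

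Subtracting, $v(\alpha(x))-A_u(x)v(x)=E_u(x)w(x)+(\Id-E_u(x))w(x)=w(x)$. Absolute convergence justifies the rearrangements, and the identity is just \eqref{eq:product-Salpha-cohomological}.

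\textbf{Lipschitz dependence.} Compare $t^{(j)}$ with $\widetilde t^{(j)}$ built from $\widetilde u$. Both are $\delta_*$-pseudotrajectories, and $\sup_k|t^{(j)}_k-\widetilde t^{(j)}_k|\le|u-\widetilde u|_\infty<\rho_*$. So \eqref{eq:product-scalar-Green-Lip-s} and \eqref{eq:product-scalar-Green-Lip-u} bound the coefficient differences by $C_4\theta_1^m\varepsilon$ and $C_4\theta_1^{m+1}\varepsilon$, uniformly in $j$. The difference of the $m$-th terms is the diagonal operator of coefficient differences applied to the same $w(\alpha^{\mp}(x))$. Summing gives $L_1|u-\widetilde u|_\infty|w|_\infty$.

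The main obstacle is bookkeeping rather than analysis: the index conventions of $b^s_{n,r}$ and $b^u_{n,r}$, in particular the empty products and the shift by one in the exponents, have to match the series \eqref{eq:product-Sg} exactly. Once that matching is done, the sums come out precisely to the constants $L$ and $L_1$ of \eqref{eq:product-Green-sum-constants}.
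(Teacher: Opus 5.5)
Your proposal is correct and follows the paper's proof essentially step for step. It identifies the coordinates of the $m$-th terms with $b^s_{0,-m-1}$ and $b^u_{0,m}$, obtains the diagonal-norm bounds summing to $L$, proves the cohomological identity by the Perron-type reindexing of the stable and unstable parts, and applies the scalar Lipschitz coefficient estimates coordinatewise to get the bound summing to $L_1$. Your continuity argument is slightly more explicit than the paper's; for the inverse factors you also need norm-continuity of $x\mapsto A_u(\alpha^k(x))^{-1}$, which follows from continuity of operator inversion.
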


\begin{proof}
\textit{(i)}
By Lemma \ref{lem:product-scalar-Green-Lipschitz} (i) 
the coefficients of
$w(\alpha^{-m-1}(x))$ and $w(\alpha^m(x))$ in
\eqref{eq:product-Sg} have norms at most $C_0\theta^m$ and
$C_0\theta^{m+1}$, respectively.  Hence the series converge absolutely
and uniformly in $x$, and \eqref{eq:product-Salpha-bound} follows.
Every finite partial sum is continuous; 
thus the uniform limit belongs to $C_b^0(\Bb,\Bb)$.

\textit{(ii)} Write $v=v^s+v^u$, where $v^s$ is the first sum in
\eqref{eq:product-Sg} and $v^u$ is the second sum with its minus sign.
The calculation is analogous to the reindexing of Perron sums.
Fix $x\in\Bb$ and abbreviate
\[
 A_k:=A_u(\alpha^k(x)),\qquad
 E_k:=E_u(\alpha^k(x)),\qquad
 w_k:=w(\alpha^k(x)),\qquad k\in\Zz.
\]
All products below retain the displayed order, and empty products
equal $\Id$.  Absolute convergence allows us to separate the term
$m=0$ and put $n=m-1$ in the remaining sum.  For the stable part,
\begin{align*}
 v^s(\alpha(x))
 &=E_0w_0+\sum_{m=1}^{\infty}
 [A_0\cdots A_{1-m}]E_{-m}w_{-m}\\
 &=E_0w_0+A_0\sum_{n=0}^{\infty}
 [A_{-1}\cdots A_{-n}]E_{-n-1}w_{-n-1}\\
 &=E_0w_0+A_0v^s(x).
\end{align*}
For the unstable part, multiplication by $A_0$ cancels the first
inverse factor in every term, so the same reindexing gives
\begin{align*}
 A_0v^u(x)
 &=-\sum_{m=0}^{\infty}
 [A_1^{-1}\cdots A_m^{-1}](\Id-E_m)w_m\\
 &=-(\Id-E_0)w_0-\sum_{n=0}^{\infty}
 [A_1^{-1}\cdots A_{n+1}^{-1}](\Id-E_{n+1})w_{n+1}\\
 &=-(\Id-E_0)w_0+v^u(\alpha(x)).
\end{align*}
Thus, in the original notation,
\[
 v^s(\alpha(x))-A_u(x)v^s(x)=E_u(x)w(x),\qquad
 v^u(\alpha(x))-A_u(x)v^u(x)=(\Id-E_u(x))w(x).
\]
Adding these identities proves \eqref{eq:product-Salpha-cohomological}.

\textit{(iii)} Put $\varepsilon:=|u-\widetilde u|_\infty$.  Fix $x\in\Bb$
and $j\in\Zz$, and set
\[
 t_r=(\Phi_u(\alpha^r(x)))_j,\qquad
 \widetilde t_r=(\Phi_{\widetilde u}(\alpha^r(x)))_j,\qquad r\in\Zz.
\]
Admissibility implies that $t=(t_r)$ and $\widetilde t=(\widetilde t_r)$
are scalar $\delta_*$-pseudotrajectories.  Moreover,
\[
 |t_r-\widetilde t_r|
 =|(u(\alpha^r(x))-\widetilde u(\alpha^r(x)))_j|
 \leq\varepsilon<\rho_*.
\]
Applying \eqref{eq:product-scalar-Green-Lip-s} with $(n,r)=(0,-m-1)$
and \eqref{eq:product-scalar-Green-Lip-u} with $(n,r)=(0,m)$ therefore
gives, for every $m\geq0$,
\begin{equation*}
 |b^s_{0,-m-1}(t)-b^s_{0,-m-1}(\widetilde t)|
 \leq C_4\theta_1^m\varepsilon, \qquad
 |b^u_{0,m}(t)-b^u_{0,m}(\widetilde t)|
 \leq C_4\theta_1^{m+1}\varepsilon.
\end{equation*}
The coefficients in \eqref{eq:product-Sg} are diagonal operators with
these scalar coefficients, so taking the supremum over $j$ gives the
same bounds for their differences in operator norm, for every
$p\in[1,+\infty]$.  Since the forcing values
$w(\alpha^{-m-1}(x))$ and $w(\alpha^m(x))$ do not depend on $u$,
\eqref{eq:product-Sg} now yields
\begin{equation*}
 |((\mathcal S_\alpha(u)-\mathcal S_\alpha(\widetilde u))w)(x)|_p
 \leq C_4\varepsilon |w|_\infty
 \sum_{m=0}^{\infty}(\theta_1^m+\theta_1^{m+1})
 =C_4\frac{1+\theta_1}{1-\theta_1}\varepsilon |w|_\infty
 =L_1\varepsilon |w|_\infty.
\end{equation*}
Taking the supremum over $x$ and then over $|w|_\infty\leq1$ proves
\eqref{eq:product-Salpha-Lip}, with $L_1$ from \eqref{eq:product-Green-sum-constants}.
\end{proof}

\textbf{Covariance under composition.} The following statement is critical for the subsequent pullback compatibility argument and motivated the precise form \eqref{eq:product-Sg}.
For a continuous map $\psi:\Bb\to\Bb$, let
\begin{equation*}
 C_{\Id+\psi}:C_b^0(\Bb,\Bb)\longrightarrow C_b^0(\Bb,\Bb),
 \qquad C_{\Id+\psi} w:=w\circ(\Id+\psi).
\end{equation*}
This is a bounded linear operator with $\|C_{\Id+\psi}\|\leq1$.

\begin{lem}
\label{lem:product-Green-covariance}
Let $\alpha,\beta\in\{f,g\}$ and $\psi\in C_b^0(\Bb,\Bb)$ satisfy
$\alpha\circ(\Id+\psi)=(\Id+\psi)\circ\beta$.
For a field $u$ admissible for $\alpha$, put
$
 v=\psi+u\circ(\Id+\psi).
$
Then $\Phi_v=\Phi_u\circ(\Id+\psi)$, the field $v$ is admissible
for $\beta$, and
\begin{equation}
\label{eq:product-Green-covariance}
 C_{\Id+\psi}\mathcal S_\alpha(u)
 =\mathcal S_\beta(v)C_{\Id+\psi}.
\end{equation}
No invertibility of $\Id+\psi$ is assumed.
\end{lem}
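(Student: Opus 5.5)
The plan is to check the three claims directly from the definitions; the whole argument rests on the semiconjugacy relation $\alpha\circ(\Id+\psi)=(\Id+\psi)\circ\beta$. First, $\Phi_v(x)=x+\psi(x)+u(x+\psi(x))=\Phi_u(x+\psi(x))$, so $\Phi_v=\Phi_u\circ(\Id+\psi)$. Continuity and boundedness of $v$ follow since $\psi,u\in C_b^0(\Bb,\Bb)$ and composition with the continuous map $\Id+\psi$ preserves these properties. For admissibility, write $\Psi:=\Id+\psi$ and compute
\[
\Phi_v(\beta(x))-f(\Phi_v(x))=\Phi_u(\Psi(\beta(x)))-f(\Phi_u(\Psi(x)))=\Phi_u(\alpha(z))-f(\Phi_u(z)),\qquad z:=\Psi(x).
\]
Taking the supremum over $x$ is bounded by the supremum over all $z\in\Bb$. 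Hence \eqref{eq:product-Green-admissibility} for $u$ and $\alpha$ gives it for $v$ and $\beta$. Surjectivity of $\Psi$ is not needed here, because only an upper bound is required.

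For the identity \eqref{eq:product-Green-covariance}, I first iterate the relation. Since $\alpha,\beta$ are homeomorphisms, $\alpha^n\circ\Psi=\Psi\circ\beta^n$ for all $n\in\Zz$. Positive $n$ follow by induction. For negative $n$, composing $\alpha\circ\Psi=\Psi\circ\beta$ with $\alpha^{-1}$ on the left and $\beta^{-1}$ on the right gives $\Psi\circ\beta^{-1}=\alpha^{-1}\circ\Psi$, and induction again applies. This is the key step where invertibility of the base maps is used, rather than invertibility of $\Psi$.

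From the iterated relation,
\[
A_v(\beta^k(x))=Df(\Phi_u(\Psi(\beta^k(x))))=Df(\Phi_u(\alpha^k(\Psi(x))))=A_u(\alpha^k(\Psi(x))),
\]
and in the same way $E_v(\beta^k(x))=E_u(\alpha^k(\Psi(x)))$. Also $(C_\Psi w)(\beta^k(x))=w(\Psi(\beta^k(x)))=w(\alpha^k(\Psi(x)))$.

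Now substitute these identities term by term into \eqref{eq:product-Sg}, written for $\mathcal S_\beta(v)$ applied to $C_\Psi w$ at the point $x$. Every term becomes exactly the corresponding term of $\mathcal S_\alpha(u)w$ evaluated at $z=\Psi(x)$. This holds for the products of $A$'s, for the cutoffs $E$ and $\Id-E$, and for the forcing values, and it applies to both the stable and the unstable sums. Both series converge absolutely by Lemma~\ref{lem:product-Salpha}\ref{item:product-Salpha-bound}, which is legitimate because $u$ is admissible for $\alpha$ and $v$ is admissible for $\beta$ by the first part. Therefore the termwise equality passes to the sums, giving $(\mathcal S_\beta(v)C_\Psi w)(x)=(\mathcal S_\alpha(u)w)(\Psi(x))=(C_\Psi\mathcal S_\alpha(u)w)(x)$.

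I expect no serious obstacle. The only point needing care is the backward iterates in the stable sum, that is, the identity $\alpha^{-m-1}\circ\Psi=\Psi\circ\beta^{-m-1}$, which is handled as above without inverting $\Psi$.
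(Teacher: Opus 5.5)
Your proposal is correct and follows the same route as the paper's proof: the identity of corrected points $\Phi_v=\Phi_u\circ(\Id+\psi)$, admissibility by bounding the supremum over the image of $\Id+\psi$, extension of the relation to all iterates $\alpha^r\circ(\Id+\psi)=(\Id+\psi)\circ\beta^r$ using invertibility of $\alpha,\beta$, and then term-by-term agreement of the two absolutely convergent Green series. Your explicit derivation of the negative iterates, and your remark that only invertibility of the base maps (not of $\Id+\psi$) is used, are exactly the points the paper relies on.
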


\begin{proof}
The definition of $v$ gives the identity of corrected points
$
 (\Id+u)\circ(\Id+\psi)=\Id+v.
$
Thus $\Phi_u\circ(\Id+\psi)=\Phi_v$ and, in particular,
\begin{align*}
 Df\circ(\Id+u)\circ(\Id+\psi)&=Df\circ(\Id+v),\\
 E\circ(\Id+u)\circ(\Id+\psi)&=E\circ(\Id+v).
\end{align*}
In other words, precomposition by $\Id+\psi$ transforms the coefficient
maps $A_u,E_u$ into $A_v,E_v$.

The assumed semiconjugacy relation gives
\[
 \Phi_v(\beta(x))-f(\Phi_v(x))
 =\Phi_u(\alpha(x+\psi(x)))-f(\Phi_u(x+\psi(x))).
\]
Taking the supremum over $x$ and using
\eqref{eq:product-Green-admissibility} for $u$ proves the admissibility of $v$
for $\beta$.

Since $\alpha,\beta$ are invertible,
$\alpha^r\circ(\Id+\psi)=(\Id+\psi)\circ\beta^r$ for every $r\in\Zz$.
Together with the identity of corrected points, this yields
$
 \Phi_u\circ\alpha^r\circ(\Id+\psi)=\Phi_v\circ\beta^r.
$
For $w\in C_b^0(\Bb,\Bb)$, the derivative and cutoff factors in the Green formula
\eqref{eq:product-Sg} for $(\mathcal S_\alpha(u)w)(x+\psi(x))$
are exactly those for $(\mathcal S_\beta(v)C_{\Id+\psi}w)(x)$,
with the same products and inverses.  The forcing terms agree as well,
since
$
 w\circ\alpha^r\circ(\Id+\psi)
 =(C_{\Id+\psi}w)\circ\beta^r.
$
The two absolutely convergent series therefore agree term by term,
which proves \eqref{eq:product-Green-covariance}.
\end{proof}

\subsection{Linear and nonlinear cohomological equations}
\label{sec:product-cohomological-operators}

Throughout the rest of this section, let $\Delta:\Bb\to\Bb$ be a
bounded Lipschitz map and put
$g=f+\Delta$.
Assume that, for some $d>0$,
\begin{equation}
\label{eq:product-Delta-estimates}
 |\Delta|_\infty\leq d,
 \qquad
 |\Delta(x)-\Delta(y)|_p\leq d|x-y|_p.
\end{equation}
For $d<\lambda_1/2$, as required in
\eqref{eq:product-smallness-list} below, the map $g$ is a bi-Lipschitz
homeomorphism.  Indeed, $\Lip(f^{-1})\leq\lambda_1^{-1}$, so the
argument of Section~\ref{sec:semiconjugacies}, applied to
$g=(\Id+\Delta\circ f^{-1})\circ f$, gives
\[
 \Lip(g^{-1})\leq\frac{\lambda_1^{-1}}{1-d/\lambda_1}
 = (\lambda_1-d)^{-1}.
\]
Writing $H=\Id+h$ and $K=\Id+k$, the semiconjugacy identities
$g\circ H=H\circ f$ and $K\circ g=f\circ K$ take the form
\[
\begin{aligned}
 h(f(x))-Df(x)h(x)&=G(h)(x),\\
 k(g(y))-Df(K(y))k(y)&=R(k)(y).
\end{aligned}
\]
For bounded continuous fields $h,k$, define the remainder operators by
\begin{equation}
\label{eq:product-G-def}
 G(h)(x):=s_f(x,h(x))+\Delta(x+h(x)),
\end{equation}
\begin{equation*}
 R(k)(y):=f(y+k(y))-f(y)-Df(y+k(y))k(y)-\Delta(y).
\end{equation*}

The reverse remainder can also be written as
\[
 R(k)(y)=-s_f(K(y),-k(y))-\Delta(y).
\]
The estimates for $G$ and $R$ are proved in Subsection~\ref{sec:product-remainders}.

Recall that
$(\mathcal D_fv)(x)=v(f(x))-Df(x)v(x)$.
Since $0$ is admissible for $f$, define
\begin{equation}
\label{eq:product-Of}
 O_f:=\mathcal S_f(0).
\end{equation}

\begin{lem}
\label{lem:product-Of}
The operator $O_f:C_b^0(\Bb,\Bb)\to C_b^0(\Bb,\Bb)$ is bounded and
linear and satisfies
\begin{equation}
\label{eq:product-Of-bound}
 |O_fw|_\infty\leq L|w|_\infty, \qquad \mathcal D_fO_f=\Id_{C_b^0(\Bb,\Bb)}.
\end{equation}
\end{lem}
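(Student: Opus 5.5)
The plan is to deduce Lemma~\ref{lem:product-Of} directly from Lemma~\ref{lem:product-Salpha}, applied with $\alpha=f$ and $u=0$. The only point requiring verification is that $u=0$ is admissible for $f$ in the sense of \eqref{eq:product-Green-admissibility}.

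\textbf{Admissibility.} For $u=0$ we have $\Phi_0=\Id$. The left-hand side of \eqref{eq:product-Green-admissibility} is therefore
\[
\sup_{x}\abs{f(x)-f(x)}_p=0<\delta_*.
\]
Coordinatewise, each sequence $t_r=(f^r(x))_j=a^r(x_j)$ is an exact scalar trajectory of $a$, and hence a $\delta_*$-pseudotrajectory. So Lemmas~\ref{lem:product-scalar-trapping}--\ref{lem:product-scalar-Green-Lipschitz} apply to it. In this case $A_0=Df$ and $E_0=E$, so $\mathcal S_f(0)$ is the Green formula \eqref{eq:product-Sg} with coefficients evaluated along genuine $f$-orbits. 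This gives $O_f$ as defined in \eqref{eq:product-Of}.

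\textbf{Linearity and bound.} Linearity of $O_f$ in $w$ is immediate from \eqref{eq:product-Sg}, since every term is a fixed linear operator applied to a value of $w$. Lemma~\ref{lem:product-Salpha}\ref{item:product-Salpha-bound} shows that $O_f$ maps $C_b^0(\Bb,\Bb)$ into itself with $\|O_f\|\leq L$, which gives $|O_fw|_\infty\leq L|w|_\infty$.

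\textbf{Right-inverse identity.} Lemma~\ref{lem:product-Salpha}\ref{item:product-Salpha-cohomological} gives $\mathcal D_{f,A_0}\mathcal S_f(0)=\Id$. Since $A_0(x)=Df(x+0)=Df(x)$, the operator $\mathcal D_{f,A_0}$ coincides with $\mathcal D_f$. Hence $\mathcal D_fO_f=\Id_{C_b^0(\Bb,\Bb)}$.

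There is no real obstacle here. The only care needed is to confirm that the admissibility hypothesis is met, which is trivial because the corrected trajectory is the exact $f$-orbit, and that the coefficient field $A_u$ reduces to $Df$ when $u=0$.
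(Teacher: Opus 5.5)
Your proposal is correct and is the paper's proof: the paper also applies Lemma~\ref{lem:product-Salpha} with $\alpha=f$ and $u=0$, taking the bound from item (i) and the right-inverse identity from item (ii), and it notes admissibility of the zero field just before defining $O_f$. Your checks that $A_0=Df$ and $E_0=E$, so that $\mathcal D_{f,A_0}=\mathcal D_f$, are the steps that one-line argument leaves implicit.
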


\begin{proof}
Apply Lemma~\ref{lem:product-Salpha} with $\alpha=f$ and $u=0$:
items \ref{item:product-Salpha-bound} and
\ref{item:product-Salpha-cohomological} give
\eqref{eq:product-Of-bound}.
\end{proof}

Use $\delta_*$ from Lemma~\ref{lem:product-scalar-pseudotrajectories},
$\rho_*$ from Lemma~\ref{lem:product-scalar-Green-Lipschitz}~\ref{item:product-scalar-Green-Lipschitz}, and
$L,L_1$ from \eqref{eq:product-Green-sum-constants}.
Fix $\rho>0$ and put
$\varepsilon:=d+\frac M2\rho^2$.
We impose the same smallness conditions on these constants throughout:
\begin{equation}
\label{eq:product-smallness-list}
\begin{aligned}
 d+(1+\lambda_1^{-1})\rho<\delta_*, \quad
 &2\rho<\rho_*,
 d<\lambda_1/2,\\
L \varepsilon = L\left(d+\frac M2\rho^2\right)\leq\rho, \quad
 &L(M\rho+d)<\frac12,\\
 LM\rho+L_1\left(d+\frac M2\rho^2\right)<\frac12, \quad  &L_1 \varepsilon < \frac12.
\end{aligned}
\end{equation}

The first two inequalities ensure admissibility of corrected trajectories
and allow comparison of Green operators.  The bound $L\varepsilon\leq\rho$
keeps the fixed-point maps in the ball of radius $\rho$, while the
contraction bounds control convergence and force the composition defects
to vanish.  In Subsection~\ref{sec:product-structural-stability}, we will
take $\rho=2Ld$ and $d$ sufficiently small to satisfy all these conditions
and use them to construct mutually inverse semiconjugacies.

Set
\begin{equation}
\label{eq:product-forward-ball-space}
 \mathcal U_\rho:=\{u\in C_b^0(\Bb,\Bb):|u|_\infty\leq\rho\},\qquad
 \mathcal W_\varepsilon:=\{w\in C_b^0(\Bb,\Bb):|w|_\infty\leq\varepsilon\}.
\end{equation}
We shall use $G$ and $R$ restricted to $\mathcal U_\rho$.
For $u\in\mathcal U_\rho$, we have
\begin{equation}
\label{eq:product-field-defect}
 |\Phi_u(g(y))-f(\Phi_u(y))|_p
 \leq d+(1+\lambda_1^{-1})\rho<\delta_*
\end{equation}
Thus every $u\in\mathcal U_\rho$ is admissible for $g$, and
$\mathcal S_g(u)$ is defined by \eqref{eq:product-Sg} with $\alpha=g$.
For every $u\in\mathcal U_\rho$ and $w\in C_b^0(\Bb,\Bb)$,
the field $v=\mathcal S_g(u)w$ satisfies the linear cohomological equation
(Lemma~\ref{lem:product-Salpha}\ref{item:product-Salpha-cohomological})
\begin{equation*}
 v(g(y))-Df(y+u(y))v(y)=w(y),
 \qquad y\in\Bb.
\end{equation*}

The reverse construction uses two fixed-point problems.  For each
prescribed forcing $w\in\mathcal W_\varepsilon$, the next lemma defines
$O_gw$ as the unique fixed point of $u\mapsto\mathcal S_g(u)w$ in
$\mathcal U_\rho$.  In Subsection~\ref{sec:product-structural-stability},
we then solve $k=O_gR(k)$, where the forcing depends on the unknown $k$,
to obtain the reverse semiconjugacy.

\begin{lem}
\label{lem:product-Og}
Assume \eqref{eq:product-Delta-estimates} and
\eqref{eq:product-smallness-list}.
Define the local nonlinear solution operator
$O_g:\mathcal W_\varepsilon\longrightarrow\mathcal U_\rho$
by the normalized implicit equation
\begin{equation}
\label{eq:product-Og-definition}
 u=O_gw\quad\Longleftrightarrow\quad
 u\in\mathcal U_\rho\ \hbox{ and }\ u=\mathcal S_g(u)w.
\end{equation}
The following assertions hold.
\begin{enumerate}[label=\textup{(\roman*)},ref=(\roman*),leftmargin=10mm]
\item\label{item:product-Og-existence}
For every $w\in\mathcal W_\varepsilon$, equation
\eqref{eq:product-Og-definition} has a unique solution
$u\in\mathcal U_\rho$.  It satisfies
\begin{equation}
\label{eq:product-Og-bound}
 |O_gw|_\infty\leq L|w|_\infty.
\end{equation}
\item\label{item:product-Og-Lipschitz}
Put
$L_g:=\frac{L}{1-L_1\varepsilon}$.
Then
\begin{equation}
\label{eq:product-Og-Lip}
 |O_gw-O_g\widetilde w|_\infty
 \leq L_g|w-\widetilde w|_\infty,
 \qquad w,\widetilde w\in\mathcal W_\varepsilon.
\end{equation}
\item\label{item:product-Og-normalization}
The normalized solution $u=O_gw$ solves the nonlinear cohomological
equation
\begin{equation}
\label{eq:product-Og-cohomological}
 u(g(y))-Df(y+u(y))u(y)=w(y).
\end{equation}
\end{enumerate}
\end{lem}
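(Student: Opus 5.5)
The plan is to run the Banach fixed-point theorem for the map $\Psi_w(u):=\mathcal S_g(u)w$ on the closed ball $\mathcal U_\rho$, for each fixed $w\in\mathcal W_\varepsilon$.

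\textbf{Existence and uniqueness.} First, every $u\in\mathcal U_\rho$ is admissible for $g$ by \eqref{eq:product-field-defect}, so $\mathcal S_g(u)$ is defined. Lemma~\ref{lem:product-Salpha}\ref{item:product-Salpha-bound} gives $|\Psi_w(u)|_\infty\leq L|w|_\infty\leq L\varepsilon\leq\rho$ by \eqref{eq:product-smallness-list}. Hence $\Psi_w$ maps $\mathcal U_\rho$ into itself, and $\Psi_w(u)$ is continuous and bounded. For $u,\widetilde u\in\mathcal U_\rho$ we have $|u-\widetilde u|_\infty\leq2\rho<\rho_*$. So Lemma~\ref{lem:product-Salpha}\ref{item:product-Salpha-Lip} applies and gives
\[
 |\Psi_w(u)-\Psi_w(\widetilde u)|_\infty\leq L_1|w|_\infty|u-\widetilde u|_\infty\leq L_1\varepsilon|u-\widetilde u|_\infty,
\]
with $L_1\varepsilon<\tfrac12$. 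Since $\mathcal U_\rho$ is a closed subset of the Banach space $C_b^0(\Bb,\Bb)$, the contraction principle yields a unique fixed point $u=O_gw\in\mathcal U_\rho$. The bound \eqref{eq:product-Og-bound} is then read off from $u=\mathcal S_g(u)w$ and $\|\mathcal S_g(u)\|\leq L$.

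\textbf{Lipschitz dependence.} Let $u=O_gw$ and $\widetilde u=O_g\widetilde w$. We split
\[
 u-\widetilde u=\mathcal S_g(u)(w-\widetilde w)+\bigl(\mathcal S_g(u)-\mathcal S_g(\widetilde u)\bigr)\widetilde w.
\]
The first term is bounded by $L|w-\widetilde w|_\infty$. The second is bounded by $L_1|u-\widetilde u|_\infty|\widetilde w|_\infty\leq L_1\varepsilon|u-\widetilde u|_\infty$, again using $|u-\widetilde u|_\infty\leq2\rho<\rho_*$. Absorbing the second term into the left side, which is possible because $L_1\varepsilon<1$, gives \eqref{eq:product-Og-Lip} with $L_g=L/(1-L_1\varepsilon)$.

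\textbf{Normalization identity.} Apply Lemma~\ref{lem:product-Salpha}\ref{item:product-Salpha-cohomological} with $\alpha=g$ and displacement $u$. It says that $v=\mathcal S_g(u)w$ satisfies $v(g(y))-Df(y+u(y))v(y)=w(y)$. Substituting $v=u$ from the fixed-point relation gives \eqref{eq:product-Og-cohomological}.

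No step is a real obstacle once Lemma~\ref{lem:product-Salpha} is available. The only care needed is bookkeeping. Admissibility must hold uniformly on the whole ball, which is the first inequality in \eqref{eq:product-smallness-list}. The distance between any two points of the ball must stay below $\rho_*$, which is guaranteed by $2\rho<\rho_*$. With both in place, the Lipschitz estimate for $\mathcal S_g$ applies to every pair of points, not just to nearby ones.
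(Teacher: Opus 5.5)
Your proposal is correct and follows essentially the same route as the paper. You run the contraction principle for $u\mapsto\mathcal S_g(u)w$ on $\mathcal U_\rho$, using admissibility from \eqref{eq:product-field-defect}, $L\varepsilon\leq\rho$, $2\rho<\rho_*$ and $L_1\varepsilon<1$. You then use the same splitting of $u-\widetilde u$ for the Lipschitz bound, and get the normalization identity from Lemma~\ref{lem:product-Salpha}\ref{item:product-Salpha-cohomological} at the fixed point.
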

\begin{remark}
Uniqueness in item~\ref{item:product-Og-existence} refers to the
normalized equation \eqref{eq:product-Og-definition}, not to all small
solutions of~\eqref{eq:product-Og-cohomological}.
\end{remark}      

\begin{proof}
Fix $w\in\mathcal W_\varepsilon$.  By
\eqref{eq:product-field-defect}, every $u\in\mathcal U_\rho$ is
admissible for $g$.  Thus $\mathcal S_g(u)w$ is defined, and
\eqref{eq:product-Salpha-bound} gives
\[
 |\mathcal S_g(u)w|_\infty
 \leq\|\mathcal S_g(u)\|\,|w|_\infty
 \leq L\varepsilon\leq\rho.
\]
Hence $u\mapsto\mathcal S_g(u)w$ maps $\mathcal U_\rho$ into itself.
For $u,\widetilde u\in\mathcal U_\rho$, we also have
$
 |u-\widetilde u|_\infty
 \leq 2\rho<\rho_*.
$
Consequently, \eqref{eq:product-Salpha-Lip} applies and yields
\begin{equation*}
 |\mathcal S_g(u)w-\mathcal S_g(\widetilde u)w|_\infty
 \leq\|\mathcal S_g(u)-\mathcal S_g(\widetilde u)\|\,|w|_\infty
 \leq L_1|u-\widetilde u|_\infty\,|w|_\infty
 \leq L_1\varepsilon|u-\widetilde u|_\infty.
\end{equation*}

Since $L_1\varepsilon<1$ and $\mathcal U_\rho$ is a closed ball in
the Banach space $C_b^0(\Bb,\Bb)$, the contraction mapping theorem
gives a unique fixed point $O_gw\in\mathcal U_\rho$.  At this fixed
point, \eqref{eq:product-Salpha-bound} gives
\[
 |O_gw|_\infty
 =|\mathcal S_g(O_gw)w|_\infty
 \leq L|w|_\infty,
\]
which proves \eqref{eq:product-Og-bound}.

Now let $w,\widetilde w\in\mathcal W_\varepsilon$, and put
$u=O_gw$ and $\widetilde u=O_g\widetilde w$.
Using their fixed-point equations and the linearity of
$\mathcal S_g(u)$ in the forcing, we obtain
\begin{equation*}
 u-\widetilde u
 =\mathcal S_g(u)w-\mathcal S_g(\widetilde u)\widetilde w
 =\mathcal S_g(u)(w-\widetilde w)
   +(\mathcal S_g(u)-\mathcal S_g(\widetilde u))\widetilde w.
\end{equation*}
Applying \eqref{eq:product-Salpha-bound} to the first term and
\eqref{eq:product-Salpha-Lip} to the second gives
\begin{equation*}
 |u-\widetilde u|_\infty
 \leq L|w-\widetilde w|_\infty
       +L_1|u-\widetilde u|_\infty\,|\widetilde w|_\infty
 \leq L|w-\widetilde w|_\infty
       +L_1\varepsilon|u-\widetilde u|_\infty.
\end{equation*}
Moving the last term to the left and dividing by
$1-L_1\varepsilon>0$, we conclude that
\[
 |O_gw-O_g\widetilde w|_\infty
 \leq\frac{L}{1-L_1\varepsilon}|w-\widetilde w|_\infty
 =L_g|w-\widetilde w|_\infty.
\]
This proves \eqref{eq:product-Og-Lip}.

Finally, apply \eqref{eq:product-Salpha-cohomological} with
$\alpha=g$ at the fixed point $u=\mathcal S_g(u)w=O_gw$.
This gives \eqref{eq:product-Og-cohomological}.
\end{proof}

\begin{remark}
\label{rem:product-alternative-mechanism}
The normalization \eqref{eq:product-Og-definition} provides an
alternative to the structural-stability mechanism in
\cite[Theorem~1]{BernardesMessaoudi} and
\cite[Theorem~16\textup{(S2)}]{TikhomirovGH}.
There, the displacements of both semiconjugacies take values in a
fixed closed subspace, so the displacements of their compositions
remain in the same normalization class and uniqueness yields the
inverse identities.  Here the normalization depends on the unknown.
The inverse identities
will follow from the pullback compatibility and comparison estimates
of Lemma~\ref{lem:product-pullback}.

Normalized Green constructions also appear in the theorem of
Backes and Dragi\v{c}evi\'c \cite{BackesDragicevicGrobmanHartman}
for nonautonomous dynamics with generalized hyperbolicity; see also
\cite{BackesDragicevicPalmer} for a related inverse argument.
\end{remark}

\subsection{Pullback compatibility of the cohomological operators}
\label{sec:product-pullback}

\begin{lem}[Pullback compatibility of the cohomological operators]
\label{lem:product-pullback}
Under \eqref{eq:product-Delta-estimates} and
\eqref{eq:product-smallness-list}, let
$w_f,w_g\in\mathcal W_\varepsilon$ and put
\begin{equation*}
 h=O_fw_f,\qquad k=O_gw_g,\qquad
 H=\Id+h,\qquad K=\Id+k.
\end{equation*}
Then $h,k\in\mathcal U_\rho$, and the following assertions hold.
\begin{enumerate}[label=\textup{(\roman*)},ref=(\roman*),leftmargin=10mm]
\item\label{item:product-pullback-forward}
If $g\circ H=H\circ f$, then
\begin{equation}
\label{eq:product-pullback-forward}
 |h+k\circ H|_\infty
 \leq L_g|w_f+w_g\circ H|_\infty.
\end{equation}
\item\label{item:product-pullback-reverse}
If $K\circ g=f\circ K$, then
\begin{equation}
\label{eq:product-pullback-reverse}
 |k+h\circ K|_\infty
 \leq L|w_g+w_f\circ K|_\infty.
\end{equation}
\end{enumerate}
\end{lem}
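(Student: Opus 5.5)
The plan is to derive both estimates from the covariance identity of Lemma~\ref{lem:product-Green-covariance}, combined with the bounds of Lemma~\ref{lem:product-Salpha}. First, $h,k\in\mathcal U_\rho$: by \eqref{eq:product-Of-bound}, $|h|_\infty\leq L|w_f|_\infty\leq L\varepsilon\leq\rho$, and $k\in\mathcal U_\rho$ by the definition of $O_g$ in Lemma~\ref{lem:product-Og}. By \eqref{eq:product-field-defect}, every element of $\mathcal U_\rho$ is admissible for $g$, and $0$ is admissible for $f$.

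For item~\ref{item:product-pullback-reverse}, I will apply Lemma~\ref{lem:product-Green-covariance} with $\alpha=f$, $\beta=g$, $\psi=k$ and $u=0$. The hypothesis $f\circ K=K\circ g$ is exactly the required relation, and the lemma returns $v=k+0\circ K=k$. This gives $C_K\mathcal S_f(0)=\mathcal S_g(k)C_K$, so
\[
 h\circ K=(O_fw_f)\circ K=\mathcal S_g(k)(w_f\circ K).
\]
On the other hand, the normalization \eqref{eq:product-Og-definition} gives $k=\mathcal S_g(k)w_g$. Adding these and using linearity in the forcing yields $k+h\circ K=\mathcal S_g(k)(w_g+w_f\circ K)$. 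Then \eqref{eq:product-Salpha-bound} gives \eqref{eq:product-pullback-reverse} with constant $L$. This is where the choice of evaluating the derivative and the cutoff at the corrected point in $O_g$ pays off: the pulled-back operator coincides exactly with the one defining $k$.

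For item~\ref{item:product-pullback-forward}, I will apply the same lemma with $\alpha=g$, $\beta=f$, $\psi=h$ and $u=k$, which is admissible for $g$. The output field is $v=h+k\circ H$, which is the composition defect itself; it is admissible for $f$, and $C_H\mathcal S_g(k)=\mathcal S_f(v)C_H$. Hence $k\circ H=\mathcal S_f(v)(w_g\circ H)$, while $h=\mathcal S_f(0)w_f$. Here the two operators differ, so I will split
\[
 v=\mathcal S_f(v)(w_f+w_g\circ H)+\bigl(\mathcal S_f(0)-\mathcal S_f(v)\bigr)w_f.
\]
This splitting is the main step of item~\ref{item:product-pullback-forward}. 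Since $|v|_\infty\leq|h|_\infty+|k|_\infty\leq2\rho<\rho_*$, and both $0$ and $v$ are admissible for $f$, estimate \eqref{eq:product-Salpha-Lip} applies and bounds the second term by $L_1|v|_\infty|w_f|_\infty\leq L_1\varepsilon|v|_\infty$. The first term is at most $L|w_f+w_g\circ H|_\infty$ by \eqref{eq:product-Salpha-bound}. Since $L_1\varepsilon<\frac12<1$ by \eqref{eq:product-smallness-list}, absorbing the $v$-term into the left side gives
\[
 |v|_\infty\leq\frac{L}{1-L_1\varepsilon}\,|w_f+w_g\circ H|_\infty=L_g\,|w_f+w_g\circ H|_\infty,
\]
which is \eqref{eq:product-pullback-forward}.

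The only delicate points I expect are checking admissibility of each field before invoking Lemmas~\ref{lem:product-Salpha} and~\ref{lem:product-Green-covariance}, and confirming that the smallness condition $2\rho<\rho_*$ in \eqref{eq:product-smallness-list} is what licenses the Lipschitz comparison in item~\ref{item:product-pullback-forward}.
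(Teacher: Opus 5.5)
Your proposal is correct and follows the paper's proof. It uses the same two applications of Lemma~\ref{lem:product-Green-covariance}, the exact identity $\mathcal S_g(k)C_K=C_KO_f$ for item~(ii), and a Lipschitz comparison of $\mathcal S_f(v)$ with $\mathcal S_f(0)$ followed by absorption for item~(i). The one cosmetic difference is in item~(i): you send the combined forcing $w_f+w_g\circ H$ through $\mathcal S_f(v)$ and put the comparison error on $w_f$, whereas the paper sends it through $O_f=\mathcal S_f(0)$ and puts the error on $w_g\circ H$; both give the same constant $L_g$.
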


\begin{proof}
The bounds in Lemmas~\ref{lem:product-Of} and \ref{lem:product-Og},
together with $L\varepsilon\leq\rho$, give $h,k\in\mathcal U_\rho$.

For item~\ref{item:product-pullback-forward}, put
$\xi_1=h+k\circ H=K\circ H-\Id$.
Apply Lemma~\ref{lem:product-Green-covariance} with
$\alpha=g$, $\beta=f$, $\psi=h$, and $u=k$.  Since $k$ is admissible
for $g$, this shows that $\xi_1$ is admissible for $f$ and gives
\[
 C_H\mathcal S_g(k)=\mathcal S_f(\xi_1)C_H.
\]
Indeed, the definitions of $H$, $K$, and $\xi_1$ give
$
 \Phi_k(H(x)) = \Phi_{\xi_1}(x).
$
Consequently, by \eqref{eq:product-field-coefficients}, the derivative
and cutoff maps after pullback satisfy
\begin{align*}
 A_k(H(x))
 &=Df(\Phi_k(H(x)))=Df(x+\xi_1(x))=A_{\xi_1}(x),\\
 E_k(H(x))
 &=E(\Phi_k(H(x)))=E(x+\xi_1(x))=E_{\xi_1}(x).
\end{align*}
These are exactly the coefficient maps defining
$\mathcal S_f(\xi_1)$.  For $O_f=\mathcal S_f(0)$, the corresponding
maps are
$
 A_0(x)=Df(x)$, $E_0(x)=E(x).
$
We therefore compare $\mathcal S_f(\xi_1)$ and $\mathcal S_f(0)$
using Lemma~\ref{lem:product-Salpha}\ref{item:product-Salpha-Lip} with
$\alpha=f$, $u=\xi_1$, and $\widetilde u=0$; the zero field is
admissible for $f$, and the lemma bounds the difference in
terms of $|\xi_1|_\infty$ without assuming $\xi_1=0$.
In particular, admissibility does not require $\xi_1\in\mathcal U_\rho$:
the corrected trajectories are
\[
 \Phi_{\xi_1}(f^r(x))=K(H(f^r(x)))=K(g^r(H(x))),\qquad r\in\Zz,
\]
which are already controlled by \eqref{eq:product-field-defect} with
$u=k$.  Although only $|\xi_1|_\infty\leq2\rho$ is known,
$2\rho<\rho_*$ allows us to apply \eqref{eq:product-Salpha-Lip}.
By \eqref{eq:product-Of},
\begin{equation}
\label{eq:product-pullback-series-difference}
 C_H\mathcal S_g(k)-O_fC_H
 =\bigl(\mathcal S_f(\xi_1)-\mathcal S_f(0)\bigr)C_H.
\end{equation}
Combining \eqref{eq:product-pullback-series-difference} with
\eqref{eq:product-Salpha-Lip} and $\|C_H\|\leq1$ yields
\begin{equation}
\label{eq:product-pullback-comparison}
 \|C_H\mathcal S_g(k)-O_fC_H\|\leq L_1|\xi_1|_\infty.
\end{equation}
The definitions of $h$ and $k$, together with
\eqref{eq:product-Og-definition}, now give
\[
 \xi_1
 =O_fw_f+C_H\mathcal S_g(k)w_g
 =O_f(w_f+C_Hw_g)
 +(C_H\mathcal S_g(k)-O_fC_H)w_g.
\]
Using \eqref{eq:product-Of-bound},
\eqref{eq:product-pullback-comparison}, and
$|w_g|_\infty\leq\varepsilon$, we obtain
\[
 |\xi_1|_\infty\leq L|w_f+w_g\circ H|_\infty
                    +L_1\varepsilon|\xi_1|_\infty.
\]
Since $L_g=L/(1-L_1\varepsilon)$, this proves
\eqref{eq:product-pullback-forward}.

For item~\ref{item:product-pullback-reverse}, apply
Lemma~\ref{lem:product-Green-covariance} with
$\alpha=f$, $\beta=g$, $\psi=k$, and $u=0$.
Here $v=K-\Id=k$, and \eqref{eq:product-Of} gives
the exact identity
\begin{equation}
\label{eq:product-pullback-operator}
 \mathcal S_g(k)C_K=C_KO_f.
\end{equation}
Under pullback by $K$, the corrected point for
$O_f=\mathcal S_f(0)$ becomes $K(y)=y+k(y)$.  The derivative and cutoff
are therefore evaluated at exactly the points used by $\mathcal S_g(k)$.
Consequently, $h\circ K$ and $k$ are represented by the same Green
operator, and their forcings can be added directly.
Put $\xi_2=k+h\circ K=H\circ K-\Id$.
By the normalization of $O_g$, the identity
\eqref{eq:product-pullback-operator}, and linearity in the forcing,
\[
 \xi_2=\mathcal S_g(k)w_g+C_KO_fw_f
     =\mathcal S_g(k)(w_g+C_Kw_f).
\]
The bound \eqref{eq:product-Salpha-bound} proves
\eqref{eq:product-pullback-reverse}.
\end{proof}

\begin{remark}
For sufficiently small fields and forcings, freezing the Green operator
at $u=0$ and using \eqref{eq:product-Df-Lip} gives a contraction
construction of solutions to \eqref{eq:product-Og-cohomological}.
This construction need not provide the normalization \eqref{eq:product-Og-definition}
and the covariance needed for the inverse identities.
\end{remark}

\subsection{The remainder operators}
\label{sec:product-remainders}

\begin{lem}
\label{lem:product-remainders}
\begin{enumerate}[label=\textup{(\roman*)},ref=(\roman*),leftmargin=10mm]
\item\label{item:product-remainders-bounds}
\textbf{Bounds and Lipschitz estimates.}
The operators $G$ and $R$ map $\mathcal U_\rho$ into
$\mathcal W_\varepsilon$.  They satisfy
\begin{equation}
 \label{eq:product-Gf}
 |G(h)|_\infty\leq d+\frac M2\rho^2=\varepsilon,
 \qquad
 |G(h)-G(\widetilde h)|_\infty \leq(M\rho+d)|h-\widetilde h|_\infty,
\end{equation}
\begin{equation}
\label{eq:product-R-field-estimates}
 |R(k)|_\infty\leq\varepsilon,
 \qquad
 |R(k)-R(\widetilde k)|_\infty
 \leq M\rho|k-\widetilde k|_\infty.
\end{equation}

\item\label{item:product-remainders-cancellation}
\textbf{Cancellation estimates.}
For arbitrary $h,k\in\mathcal U_\rho$, put
$H=\Id+h$ and $K=\Id+k$.  Then
\begin{equation}
\label{eq:product-remainder-KH}
 |G(h)+R(k)\circ H|_\infty
 \leq M\rho|h+k\circ H|_\infty,
\end{equation}
\begin{equation}
\label{eq:product-remainder-HK}
 |R(k)+G(h)\circ K|_\infty
 \leq(M\rho+d)|k+h\circ K|_\infty.
\end{equation}
These two estimates do not require either semiconjugacy identity.
\end{enumerate}
\end{lem}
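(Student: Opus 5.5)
The plan is to verify every estimate directly from the definitions of $G$ and $R$. Each one reduces to a coordinatewise scalar estimate using $|a''|\leq M$, followed by taking the $\ell^p$ norm. The key observation is that every coordinate of a field in $\mathcal U_\rho$ is bounded by $\rho$, since $|u_j(x)|\leq|u(x)|_\infty\leq|u(x)|_p\leq\rho$. Every scalar bound below has the form $|F_j|\leq M\rho\,|\xi_j|$ or similar, and this passes to $\ell^p$ for every $p\in[1,+\infty]$. Continuity of $G(h)$ and $R(k)$ is clear from continuity of $f$, $Df$, $\Delta$.

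\textbf{Item (i).} For $G$, the bound $|s_f(x,h)|_p\leq\frac M2\rho^2$ together with $|\Delta|_\infty\leq d$ gives $|G(h)|_\infty\leq\varepsilon$. The Lipschitz bound combines \eqref{eq:product-s-Lip-v} (constant $M\rho$) with $\Lip(\Delta)\leq d$ applied at the points $x+h(x)$ and $x+\widetilde h(x)$. For $R$, the identity $R(k)(y)=-s_f(K(y),-k(y))-\Delta(y)$ gives $|R(k)|_\infty\leq\frac M2\rho^2+d=\varepsilon$. For the Lipschitz estimate of $R$, fix $y$ and $j$ and consider the scalar function
\[
\varphi(t)=a(y_j+t)-a'(y_j+t)\,t .
\]
Its derivative is $\varphi'(t)=-a''(y_j+t)\,t$, so $|\varphi'|\leq M\rho$ for $|t|\leq\rho$. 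Since $\Delta$ cancels in $R(k)-R(\widetilde k)$, the mean value theorem gives $|(R(k)-R(\widetilde k))(y)_j|\leq M\rho\,|k_j(y)-\widetilde k_j(y)|$, and taking the $\ell^p$ norm finishes (i).

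\textbf{Item (ii), first estimate \eqref{eq:product-remainder-KH}.} Put $y=H(x)$ and $\xi=h+k\circ H$, so that $y+k(y)=x+\xi(x)$ and $k(y)=\xi-h$. Adding the two formulas, the $\Delta$ terms cancel, and $f(y)=f(x+h)$ cancels. What remains is
\[
G(h)(x)+R(k)(H(x))=f(x+\xi)-f(x)-Df(x)h-Df(x+\xi)(\xi-h).
\]
Coordinatewise, regard this with $h_j$ fixed as the scalar function
\[
F(s)=a(x_j+s)-a(x_j)-a'(x_j)h_j-a'(x_j+s)(s-h_j).
\]
We have $F(0)=0$, and differentiating in $s$ gives $F'(s)=-a''(x_j+s)(s-h_j)$.

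The point needing care is to avoid a crude bound $2M\rho$. The naive split into $s_f$-type terms gives $M(\tfrac12|\xi_j|+|h_j|)|\xi_j|$, which loses a factor. Working instead with the single function $F$ and bounding $|s-h_j|$ along the segment from $0$ to $\xi_j$: this quantity lies between $|h_j|$ and $|\xi_j-h_j|=|k_j(y)|$. Both are at most $\rho$, so $|F(\xi_j)|\leq M\rho|\xi_j|$, which is \eqref{eq:product-remainder-KH}.

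\textbf{Item (ii), second estimate \eqref{eq:product-remainder-HK}.} Put $x=K(y)$ and $\xi=k+h\circ K$. The terms $f(x)$ cancel, leaving
\[
f(y+\xi)-f(y)-Df(y+k)\xi+\bigl(\Delta(y+\xi)-\Delta(y)\bigr).
\]
The $\Delta$ difference is bounded by $d|\xi|_p$. The scalar function
\[
F(t)=a(y_j+t)-a(y_j)-a'(y_j+k_j)t
\]
satisfies $F(0)=0$ and $F'(t)=a'(y_j+t)-a'(y_j+k_j)$. For $t$ between $0$ and $\xi_j$ we have $|t-k_j|\leq\max(|k_j|,|h_j(K(y))|)\leq\rho$. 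Hence $|F(\xi_j)|\leq M\rho|\xi_j|$, and summing the two contributions gives $(M\rho+d)|\xi|_\infty$.

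Neither cancellation estimate uses a semiconjugacy identity, only the algebraic definitions of $\xi_1$ and $\xi_2$. I expect the main obstacle to be the sharp constant $M\rho$ in both cancellation estimates. It requires organizing each sum as a single scalar function vanishing at $0$, as above, rather than estimating the pieces separately.
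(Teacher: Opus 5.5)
Your proof is correct and is essentially the paper's argument. Your scalar functions $F$ in the two cancellation estimates are, up to an additive constant and the translation $v=s-h_j$ (respectively $v=t-k_j$), the coordinates of $v\mapsto -s_f(H(x)+v,-v)$ and $v\mapsto s_f(K(y),v)$. So your mean-value bounds are exactly the paper's applications of \eqref{eq:product-endpoint-remainder-Lip} with $v=-h(x)$, $\widetilde v=k(H(x))$, and of \eqref{eq:product-s-Lip-v} with $u=h(K(y))$, $v=-k(y)$.

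One phrase needs tidying: $|s-h_j|$ does not literally ``lie between'' $|h_j|$ and $|k_j(y)|$. What you use, and what is true, is $|s-h_j|\leq\max\{|h_j|,|k_j(y)|\}\leq\rho$, because $s-h_j$ runs over the segment from $-h_j$ to $k_j(y)$.
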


\begin{proof}
For item~\ref{item:product-remainders-bounds}, the defining formulas give
continuous fields for inputs in $\mathcal U_\rho$.  The elementary estimates
\eqref{eq:product-s-Lip-v} and \eqref{eq:product-Delta-estimates}
give \eqref{eq:product-Gf}.
To prove \eqref{eq:product-R-field-estimates}, we estimate
$s_f(y+v,-v)$ directly.  For a scalar base point $t$, differentiation
gives
\begin{equation*}
 \frac{d}{dq}\bigl(a(t)-a(t+q)+a'(t+q)q\bigr)
 =a''(t+q)q.
\end{equation*}
Since $|a''|\leq M$, this derivative has absolute value at most
$M\rho$ for $|q|\leq\rho$.  Applying the resulting Lipschitz estimate
coordinatewise and taking the $\ell^p$ norm gives
\begin{equation}
\label{eq:product-endpoint-remainder-Lip}
\begin{gathered}
 |s_f(y+v,-v)-s_f(y+\widetilde v,-\widetilde v)|_p
 \leq M\rho|v-\widetilde v|_p,\\
 |v|_p,|\widetilde v|_p\leq\rho.
\end{gathered}
\end{equation}
For $k\in\mathcal U_\rho$, the defining formulas give
\begin{equation}\label{eq:Rk-F}
 R(k)(y)=-s_f(y+k(y),-k(y))-\Delta(y).
\end{equation}
The Taylor bound $|s_f(y+v,-v)|_p\leq M|v|_p^2/2$ and
\eqref{eq:product-Delta-estimates} therefore imply
\[
 |R(k)(y)|_p
 \leq \frac M2|k(y)|_p^2+d
 \leq \frac M2\rho^2+d=\varepsilon.
\]
Applying \eqref{eq:product-endpoint-remainder-Lip} with $v=k(y)$
and $\widetilde v=\widetilde k(y)$ also gives
\[
 |R(k)(y)-R(\widetilde k)(y)|_p
 \leq M\rho|k(y)-\widetilde k(y)|_p.
\]
Taking the supremum over $y$ in both bounds proves
\eqref{eq:product-R-field-estimates}.

For item~\ref{item:product-remainders-cancellation}, we first record the
exact cancellation underlying \eqref{eq:product-remainder-KH}.
If $k\in\mathcal U_\rho$ and the right-hand side of
\eqref{eq:product-remainder-KH} vanishes, that is, $k(H(x))=-h(x)$ for
all $x\in\Bb$, then
\[
 R(k)(x+h(x))
 =f(x)-f(x+h(x))+Df(x)h(x)-\Delta(x+h(x))
 =-G(h)(x),
\]
and hence the left-hand side vanishes as well.

For arbitrary $k$, the definitions of $G$ and $R$ give
\begin{equation*}
 G(h)(x)+R(k)(H(x))
 =s_f(x,h(x))-s_f(K(H(x)),-k(H(x))).
\end{equation*}
Apply \eqref{eq:product-endpoint-remainder-Lip} with
$y=H(x)$, $v=-h(x)$, and $\widetilde v=k(H(x))$.
Both displacements have norm at most $\rho$, and
$y+v=x$, $y+\widetilde v=K(H(x))$.  Hence
\[
 |G(h)(x)+R(k)(H(x))|_p
 \leq M\rho|h(x)+k(H(x))|_p.
\]
Taking the supremum over $x$ proves
\eqref{eq:product-remainder-KH}.
Similarly, \eqref{eq:Rk-F} implies
\begin{align*}
 R(k)(y)+G(h)(K(y))
 =s_f(K(y),h(K(y)))-s_f(K(y),-k(y))\notag
 +\Delta(H(K(y)))-\Delta(y)
\end{align*}
Applying \eqref{eq:product-s-Lip-v} to the first difference and the
Lipschitz bound on $\Delta$ to the second, and using
$H(K(y))-y=k(y)+h(K(y))$, we obtain
\eqref{eq:product-remainder-HK}.
\end{proof}

\subsection{Structural stability}
\label{sec:product-structural-stability}

\begin{proof}[Proof of Theorem~\ref{thm:product-structural-stability}]
For $\Delta=g-f$, assumption \eqref{eq:product-fg-close} implies
\eqref{eq:product-Delta-estimates}.  Take the constants
$\delta_*,L,L_1,\rho_*$ as above and set
$\rho=2Ld$.
We choose $d_0>0$ so small that, whenever $0<d<d_0$, all inequalities
in \eqref{eq:product-smallness-list} hold.
This is possible because $\rho=2Ld\to0$ as $d\to0$.

\medskip
\noindent\textbf{Step 1: the forward semiconjugacy.}
Use the closed ball $\mathcal U_\rho$ from
\eqref{eq:product-forward-ball-space} and the operator $G$ from
\eqref{eq:product-G-def}.
By Lemmas~\ref{lem:product-Of} and
\ref{lem:product-remainders}~\ref{item:product-remainders-bounds},
\begin{align*}
 |O_fG(h)|_\infty
 &\leq L\left(d+\frac M2\rho^2\right)\leq\rho,\\
 |O_fG(h)-O_fG(\widetilde h)|_\infty
 &\leq L(M\rho+d)|h-\widetilde h|_\infty.
\end{align*}
Since $L(M\rho+d)<1/2$ by \eqref{eq:product-smallness-list},
$h\mapsto O_fG(h)$ is a contraction of $\mathcal U_\rho$ into itself.
Its unique fixed point $h$ satisfies
$\mathcal D_fh=G(h)$ by \eqref{eq:product-Of-bound}.
The definition of $G$ and \eqref{eq:nonlinear-cohom} then give
$g\circ H=H\circ f$ for $H=\Id+h$.

\medskip
\noindent\textbf{Step 2: a normalized reverse semiconjugacy.}
Lemmas~\ref{lem:product-Og} and
\ref{lem:product-remainders}~\ref{item:product-remainders-bounds} show that
$k\mapsto O_gR(k)$ is well defined on $\mathcal U_\rho$ and satisfies
\begin{equation*}
 |O_gR(k)|_\infty\leq L\varepsilon\leq\rho, \qquad
 |O_gR(k)-O_gR(\widetilde k)|_\infty
 \leq L_gM\rho|k-\widetilde k|_\infty.
\end{equation*}
Hence this map has a unique fixed point $k\in\mathcal U_\rho$.
Define
$K:=\Id+k$.
Equation \eqref{eq:product-Og-cohomological} with $w=R(k)$,
together with the definition of $R$ and $g=f+\Delta$, gives
$K\circ g=f\circ K$.

\medskip
\noindent\textbf{Step 3: the semiconjugacies are mutually inverse.}
Put $w_f=G(h)$ and $w_g=R(k)$; these forcings belong to
$\mathcal W_\varepsilon$ by
Lemma~\ref{lem:product-remainders}~\ref{item:product-remainders-bounds}.
Since $g\circ H=H\circ f$ and $K\circ g=f\circ K$, applying
Lemma~\ref{lem:product-pullback}~\ref{item:product-pullback-forward}
and~\ref{item:product-pullback-reverse}, together with
Lemma~\ref{lem:product-remainders}~\ref{item:product-remainders-cancellation},
gives
\begin{align*}
 |h+k\circ H|_\infty
 &\leq L_g|G(h)+R(k)\circ H|_\infty
 \leq L_gM\rho|h+k\circ H|_\infty,\\
 |k+h\circ K|_\infty
 &\leq L|R(k)+G(h)\circ K|_\infty
 \leq L(M\rho+d)|k+h\circ K|_\infty.
\end{align*}
Here $L_gM\rho<1$  and
$L(M\rho+d)<1/2$ by \eqref{eq:product-smallness-list}.
Thus both composition defects vanish, so
$K\circ H=\Id$ and
$H\circ K=\Id$.
Both $H$ and $K$ are continuous, so they are mutually inverse
homeomorphisms.  Finally,
$|h|_\infty,|k|_\infty\leq\rho=2Ld$, and the theorem holds with
$
 M_0=2L.
$
\end{proof}

\begin{remark}
\label{rem:product-uniform-conjugacy}
Let $BUC(\Bb,\Bb)$ denote the closed subspace of $C_b^0(\Bb,\Bb)$
consisting of bounded uniformly continuous fields.  The corrections in
Theorem~\ref{thm:product-structural-stability} can be chosen with
$h,k\in BUC(\Bb,\Bb)$, with the same bounds.  Thus both $H$ and
$H^{-1}$ are uniformly continuous.
Indeed, the maps $f^{\pm1}$ and $g^{\pm1}$ are Lipschitz; in particular,
the argument in Subsection~\ref{sec:product-cohomological-operators}
gives $\Lip(g^{-1})\leq(\lambda_1-d)^{-1}$.
The coefficient maps $Df$, $x\mapsto Df(x)^{-1}$, and $E$ are bounded
and Lipschitz.  Hence, if $u,w\in BUC(\Bb,\Bb)$ and $u$ is admissible
for $\alpha\in\{f,g\}$, every finite partial sum of
$\mathcal S_\alpha(u)w$ is uniformly continuous: it uses finitely many
Lipschitz iterates of $\alpha^{\pm1}$ and the uniformly continuous map
$\Id+u$.  Uniform convergence of the Green series gives
$\mathcal S_\alpha(u)w\in BUC(\Bb,\Bb)$.
The operators $G$ and $R$ also preserve bounded uniform continuity
on $\mathcal U_\rho$.  Starting from zero, the iterations defining
$h=O_fG(h)$, $O_gw$, and $k=O_gR(k)$ therefore remain in this closed
subspace and converge uniformly, which gives the asserted regularity
with the same bounds.

An analogous upgrade in the general part of the paper requires
additional reasoning. Continuous selections, whether from Michael's theorem or from a partition of unity, come without a uniform modulus of continuity, so the subsequent
uniform-limit arguments do not by themselves yield semiconjugacies in
$\Id+BUC(\Bb,\Bb)$.
\end{remark}

\begin{remark}
\label{rem:product-regularity}
The $C^2$ assumption on $a$ enters the proof only through the bound
$|a''|\leq M$ in \eqref{eq:product-a-bounds}.  A reduction to
$a\in C^1(\Rr)$ with bounded uniformly continuous derivative would
require additional arguments, since the contractions and pullback
estimates above use Lipschitz dependence on the displacement.
\end{remark}

\appendix
\section{Proofs of the scalar estimates}
\label{app:product-scalar-pseudotrajectories}

\subsection{Cutoff and trapping}

\begin{proof}[Proof of Lemma~\ref{lem:product-scalar-trapping}]
\medskip
\noindent\textbf{Step 1: construction of the scalar cutoff.}
Since $a^{-1}(-c)>-c$ and $a^{-1}(c)<c$, we can choose $\gamma>0$ so
small that
\begin{equation*}
 -c+4\gamma<a^{-1}(-c-4\gamma)<0
 <a^{-1}(c+4\gamma)<c-4\gamma.
\end{equation*}
Choose a Lipschitz function $\eta:\mathbb R\to[0,1]$ satisfying
\begin{equation*}
 \begin{aligned}
 \eta(t)&=1
 &&\text{for }t\leq-c+4\gamma
       \text{ or }t\geq c-4\gamma,\\
 \eta(t)&=0
 &&\text{for }a^{-1}(-c-4\gamma)
       \leq t\leq a^{-1}(c+4\gamma).
 \end{aligned}
\end{equation*}
On each of the two remaining compact intervals, we use linear
interpolation or any other Lipschitz interpolation.  These properties give
\eqref{eq:product-eta-implications}, proving item~\ref{item:product-scalar-cutoff}.

\medskip
\noindent\textbf{Step 2: scalar pseudotrajectories remain in the correct
regions.}
Put
\begin{equation}
\label{eq:product-R}
 \Lambda:=\lambda_1^{-1}.
\end{equation}
By the monotonicity of $a$ and \eqref{eq:product-a-order}, the numbers
\begin{equation*}
 \alpha_s:=\min\left\{
 a(c+2\gamma)-(c+2\gamma),
 (-c-2\gamma)-a(-c-2\gamma)
 \right\}
\end{equation*}
and
\begin{equation*}
 \alpha_u:=\min\left\{
 (c-2\gamma)-a^{-1}(c-2\gamma),
 a^{-1}(-c+2\gamma)-(-c+2\gamma)
 \right\}
\end{equation*}
are positive.  We first choose $\delta_*>0$ so small that
\begin{equation}
\label{eq:product-delta-margin}
 \delta_*<\min\{\gamma,\alpha_s/2,(2\Lambda)^{-1}\alpha_u\}.
\end{equation}
Further decreases of $\delta_*$ preserve the conclusions below.

Let $(t_n)_{n\in\Zz}$ satisfy \eqref{eq:product-scalar-pseudo}.
If $\eta(t_r)\neq0$, then \eqref{eq:product-eta-implications} and
\eqref{eq:product-scalar-pseudo} imply
$|t_{r+1}|>c+3\gamma$.  Once $|t_k|\geq c+2\gamma$, monotonicity of $a$
and the definition of $\alpha_s$ imply that $a(t_k)$ lies at least
$\alpha_s$ beyond the boundary of the set
$(-\infty,-c-2\gamma]\cup[c+2\gamma,+\infty)$.  Put
$\beta:=\min\{\gamma,\alpha_s/2,\alpha_u/2\}>0$.
Then \eqref{eq:product-delta-margin} and induction give
\eqref{eq:product-stable-margin}.

For the backward direction, \eqref{eq:product-a-bounds} gives
$\operatorname{Lip}(a^{-1})\leq \Lambda$.  Therefore
\begin{equation}
\label{eq:product-backward-error}
 |t_k-a^{-1}(t_{k+1})|
 \leq \Lambda|a(t_k)-t_{k+1}|
 <\Lambda\delta_*.
\end{equation}
If $1-\eta(t_r)\neq0$, then $|t_r|<c-4\gamma$.
The definition of $\alpha_u$, \eqref{eq:product-delta-margin}, and
backward induction give \eqref{eq:product-unstable-margin}.
This proves item~\ref{item:product-scalar-margins}.
\end{proof}

\subsection{Uniform exponential bounds}

\begin{proof}[Proof of Lemma~\ref{lem:product-scalar-pseudotrajectories}]
Keep the cutoff and margins from Lemma~\ref{lem:product-scalar-trapping},
and recall that $\Lambda=\lambda_1^{-1}$ from \eqref{eq:product-R}.
We prove the subsegment estimates by comparing finite pseudotrajectory
blocks with exact orbits and then concatenating blocks.

\medskip
\noindent\textbf{Step 1: forward products.}
Choose $N\geq1$ so large that
\begin{equation}
\label{eq:product-N}
 C\lambda^N<\frac14.
\end{equation}
Consider a scalar pseudotrajectory segment
$t_0,t_1,\ldots,t_N$ contained in the stable region
$|t_i|>c+2\gamma$.  Let $u_i=a^i(t_0)$.  From
$|a'|\leq \Lambda$ and \eqref{eq:product-scalar-pseudo}, induction gives
\begin{equation}
\label{eq:product-finite-comparison}
 |t_i-u_i|
 \leq\delta_*\sum_{q=0}^{i-1}\Lambda^q,
 \qquad 0\leq i\leq N.
\end{equation}
For arbitrary numbers $\alpha_i,\beta_i$, one has the product telescoping
identity
\begin{equation}
\label{eq:product-product-telescope}
 \prod_{i=0}^{N-1}\alpha_i-
 \prod_{i=0}^{N-1}\beta_i
 =\sum_{q=0}^{N-1}
 \left(\prod_{i<q}\alpha_i\right)
 (\alpha_q-\beta_q)
 \left(\prod_{i>q}\beta_i\right).
\end{equation}
Apply this with $\alpha_i=a'(t_i)$ and $\beta_i=a'(u_i)$.  Using
\eqref{eq:product-a-bounds}, \eqref{eq:product-finite-comparison}, and
$|a''|\leq M$, we obtain
\begin{equation*}
 \left|
 \prod_{i=0}^{N-1}a'(t_i)-
 \prod_{i=0}^{N-1}a'(u_i)
 \right|
 \leq
 M \Lambda^{N-1}N\left(\sum_{q=0}^{N-1}\Lambda^q\right)\delta_*.
\end{equation*}
After decreasing $\delta_*$, the right-hand side is less than $1/4$.
Because $|t_0|>c$, \eqref{eq:product-scalar-estimates} and
\eqref{eq:product-N} imply
$
 \prod_{i=0}^{N-1}a'(u_i)=|(a^N)'(t_0)|<\frac14.
$
Thus every stable pseudotrajectory block of length $N$ has a derivative
product of at most $1/2$.

\medskip
\noindent\textbf{Step 2: backward products.}
Use the same argument with $a^{-1}$ and the factors
 $b(t)=\frac1{a'(t)}$.
By \eqref{eq:product-a-bounds},
\begin{equation}
\label{eq:product-b-bounds}
 |b(t)|\leq \Lambda,
 \qquad
 |b'(t)|=\frac{|a''(t)|}{|a'(t)|^2}
 \leq M\lambda_1^{-2}.
\end{equation}
For a backward pseudotrajectory block $t_{r-N},\ldots,t_r$ contained
in the region $|t|<c-2\gamma$, put $u_{r-q}=a^{-q}(t_r)$, $0\leq q\leq N$.
By \eqref{eq:product-backward-error}, the one-step error is bounded
by $\Lambda\delta_*$, so
\begin{equation*}
 |t_{r-q}-u_{r-q}|
 \leq \Lambda\delta_*\sum_{i=0}^{q-1}\Lambda^i,
 \qquad 1\leq q\leq N.
\end{equation*}
The product telescoping identity \eqref{eq:product-product-telescope}
and \eqref{eq:product-b-bounds} therefore give
\begin{equation*}
 \left|\prod_{k=r-N}^{r-1}b(t_k)
 -\prod_{k=r-N}^{r-1}b(u_k)\right|
 \leq C_N\delta_*,
\end{equation*}
where $C_N$ depends only on the scalar bounds and the chosen block
length $N$.  The exact product satisfies
$ \prod_{k=r-N}^{r-1}b(u_k)
 =|(a^{-N})'(t_r)|
 <\frac14
$
by \eqref{eq:product-scalar-estimates} and \eqref{eq:product-N}.
Decreasing $\delta_*$ so that $C_N\delta_*<1/4$ gives the same
bound $1/2$ for every backward block of length $N$.

\medskip
\noindent\textbf{Step 3: concatenation of blocks.}
Split either type of segment of length $m$ into blocks of length $N$
and a remainder of length less than $N$.  Each full-block product is
at most $1/2$, and each factor in the remainder is at most $\Lambda$.
Thus each product, in its respective region, is bounded by
\[
 \Lambda^{N-1}2^{-\lfloor m/N\rfloor}
 \leq 2\Lambda^{N-1}\bigl(2^{-1/N}\bigr)^m.
\]
We may therefore take
$\theta=2^{-1/N}$ and $C_0=2\Lambda^{N-1}$.
This gives \eqref{eq:product-stable-subsegment} and
\eqref{eq:product-unstable-subsegment}, completing the proof.
\end{proof}

\subsection{Green coefficient bounds and Lipschitz dependence}

\begin{proof}[Proof of Lemma~\ref{lem:product-scalar-Green-Lipschitz}]
\medskip
\noindent\textbf{Step 1: coefficient bounds.}
Let $\widehat C_0$ be the subsegment constant supplied by
Lemma~\ref{lem:product-scalar-pseudotrajectories} before any further
increase, and put $\Lambda=\lambda_1^{-1}$.  If $\eta(t_r)\neq0$, the forward
margin in Lemma~\ref{lem:product-scalar-trapping} and
\eqref{eq:product-stable-subsegment} give
\[
 |b^s_{n,r}(t)|\leq\widehat C_0\theta^{n-r-1},\qquad r<n.
\]
If $1-\eta(t_r)\neq0$, the backward margin and
\eqref{eq:product-unstable-subsegment} control the product from $n$ to
$r-1$.  The last factor $1/a'(t_r)$ is at most $\Lambda$, so
\[
 |b^u_{n,r}(t)|\leq \Lambda\widehat C_0\theta^{r-n},\qquad n\leq r.
\]
When the corresponding cutoff vanishes, the coefficient is zero.
Thus replacing $C_0$ by
$\max\{\widehat C_0,\Lambda\widehat C_0/\theta\}$ proves
\eqref{eq:product-stable-coefficient} and
\eqref{eq:product-unstable-coefficient}, including the empty-product
cases.  The subsegment estimates remain valid with this larger constant.

\medskip
\noindent\textbf{Step 2: Lipschitz dependence.}
Choose $\rho_*\in(0, \beta/2)$.
Let $t,\widetilde t$ satisfy the assumptions of the lemma, and put
$
 \varepsilon:=\sup_{k\in\Zz}|t_k-\widetilde t_k|<\rho_*$,
 $L_\eta:=\operatorname{Lip}(\eta).
$
We first check that the subsegment estimates apply to both sequences
whenever at least one of the corresponding cutoff factors is nonzero.
If $\eta(t_r)\neq0$, then \eqref{eq:product-stable-margin} gives, for
$k\geq r+1$,
\[
 |\widetilde t_k|\geq |t_k|-\varepsilon
 >c+2\gamma+\beta-\varepsilon>c+2\gamma.
\]
The same argument with $t,\widetilde t$ interchanged applies if
$\eta(\widetilde t_r)\neq0$.  Similarly, if $1-\eta(t_r)\neq0$, then
\eqref{eq:product-unstable-margin} gives, for $k\leq r$,
\[
 |\widetilde t_k|\leq |t_k|+\varepsilon
 <c-2\gamma-\beta+\varepsilon<c-2\gamma,
\]
and again the roles of the two sequences can be interchanged.
Thus, in each nontrivial case, both sequences lie in the region required
by the corresponding subsegment estimate.

For integers $i\leq j$, abbreviate
\[
 \Pi^s_{i,j}(t):=\prod_{k=i}^{j-1}a'(t_k),
 \qquad
 \Pi^u_{i,j}(t):=\prod_{k=i}^{j-1}b(t_k),
 \qquad b(t)=\frac1{a'(t)}.
\]
Empty products equal $1$.  By
\eqref{eq:product-stable-subsegment} and
\eqref{eq:product-unstable-subsegment}, the relevant products along
either sequence are bounded by $C_0\theta^{j-i}$, including the case $i=j$,
since $C_0\geq1$.

\medskip
\noindent\emph{Stable coefficients.}
Fix $r<n$ and put $m=n-r-1$.  If
$\eta(t_r)=\eta(\widetilde t_r)=0$, the assertion is immediate.
Otherwise, both sequences lie in the stable region for $k\geq r+1$,
and
$$
 b^s_{n,r}(t)-b^s_{n,r}(\widetilde t)
 =\bigl(\eta(t_r)-\eta(\widetilde t_r)\bigr)
       \Pi^s_{r+1,n}(\widetilde t)
 +\eta(t_r)
       \bigl(\Pi^s_{r+1,n}(t)-\Pi^s_{r+1,n}(\widetilde t)\bigr).
$$
The first term has absolute value at most
$L_\eta C_0\theta^m\varepsilon$.
For $m\geq1$, the product telescoping identity
\eqref{eq:product-product-telescope} reads
\[
 \Pi^s_{r+1,n}(t)-\Pi^s_{r+1,n}(\widetilde t)
 =\sum_{j=r+1}^{n-1}
 \Pi^s_{r+1,j}(t)
 \bigl(a'(t_j)-a'(\widetilde t_j)\bigr)
 \Pi^s_{j+1,n}(\widetilde t).
\]
For each summand,
$$
 |\Pi^s_{r+1,j}(t)|\leq C_0\theta^{j-r-1}, \quad
 |a'(t_j)-a'(\widetilde t_j)|\leq M\varepsilon, \quad
 |\Pi^s_{j+1,n}(\widetilde t)|\leq C_0\theta^{n-j-1}.
 $$
The two products are estimated separately along their respective
pseudotrajectories.  Their lengths add up to $m-1$, so each of the
$m$ summands is bounded by $MC_0^2\theta^{m-1}\varepsilon$.
Using $0\leq\eta\leq1$, we obtain
\begin{equation*}
 |b^s_{n,r}(t)-b^s_{n,r}(\widetilde t)|
 \leq
 \bigl(L_\eta C_0\theta^m
       +mMC_0^2\theta^{\max\{m-1,0\}}\bigr)\varepsilon.
\end{equation*}
This also holds for $m=0$: the products are empty, and the difference
is simply $\eta(t_r)-\eta(\widetilde t_r)$.

\medskip
\noindent\emph{Unstable coefficients.}
The argument is analogous, with $a'$ replaced by $b=1/a'$.
For $m=r-n\geq0$, separate the final factor:
$
 b^u_{n,r}(t)=(1-\eta(t_r))b(t_r)\Pi^u_{n,r}(t).
$
In the nontrivial case, the remaining product uses only the controlled
segment from $n$ to $r$.
By \eqref{eq:product-b-bounds}, $|b|\leq\Lambda$ and
$\operatorname{Lip}(b)\leq M\Lambda^2$, so the endpoint factor satisfies
\[
 |(1-\eta)b|_\infty\leq\Lambda,\qquad
 \operatorname{Lip}((1-\eta)b)\leq\Lambda L_\eta+M\Lambda^2.
\]
The same cutoff decomposition and telescoping estimate therefore give
$$
 |b^u_{n,r}(t)-b^u_{n,r}(\widetilde t)|
 \leq\bigl((\Lambda L_\eta+M\Lambda^2)C_0\theta^m
          +mM\Lambda^3C_0^2
             \theta^{\max\{m-1,0\}}\bigr)\varepsilon.
$$
As in the stable case, this includes $m=0$, when the product is empty.

Replacing $\theta$ by any slightly larger $\theta_1\in(\theta,1)$
absorbs the polynomial factors into a constant.
After we also absorb the fixed factor $\theta_1^{-1}$ in the unstable
case, \eqref{eq:product-scalar-Green-Lip-s} and
\eqref{eq:product-scalar-Green-Lip-u} follow with a common constant $C_4$.
All constants depend only on $a$.
\end{proof}

\section*{Declarations}

\paragraph{Acknoledgement.} 
The author thanks S. Yu. Pilyugin for stimulating discussions.

\paragraph{Funding.}
The work was supported by FAPERJ APQ1 E-26/210.702/2024 (295291),
FAPESP grant 2023/07076-4, and the National Council for Scientific and
Technological Development (CNPq), through grants 404123/2023-6 and
403673/2025-9 and a Bolsa de Produtividade em Pesquisa do CNPq -- Nível C.

\paragraph{Competing interests.}
The author has no competing interests to declare that are relevant to
the content of this article.

\paragraph{Data availability.}
Data sharing is not applicable to this article, as no datasets were
generated or analysed during the current study.

\paragraph{Use of generative AI.}
During the preparation of this work, the author used ChatGPT (OpenAI)
and Claude (Anthropic).
The author provided a detailed draft of the proof of the first
semiconjugacy in Theorem \ref{thm:main}, which ChatGPT expanded into a
complete text. The author guided the construction of the reverse
semiconjugacy step by step through detailed prompts. For the second main
theorem, the author provided the strategy and the main technical tools:
the cutoff map $\eta$, the operator $\mathcal S_{\alpha}(u)$, the covariance identity
in Lemma \ref{lem:product-Green-covariance}, the nonlinear cohomological
equation, pullback compatibility in Lemma \ref{lem:product-pullback}, and
the cancellation of the remainders in
Lemma~\ref{lem:product-remainders}~\ref{item:product-remainders-cancellation}.
The wording was prepared with the help of ChatGPT under the author's
direction.
Claude was used to review the manuscript and to suggest and draft, at the
author's request, revisions of the exposition, structure and notation.
The author reviewed and edited all suggestions and takes full responsibility for the content of the article.

Although most of the text was prepared before 11 September 2026, the
author's use of AI followed the principles described in the blog post
cited below\footnote{\emph{A Severe Misalignment of AI in Mathematics},
\url{https://terrytao.wordpress.com/2026/09/11/a-severe-misalignment-of-ai-in-mathematics/}},
including its use to develop new concepts and understand mathematical
phenomena.

\end{document}